\theoremstyle{plain}
\newtheorem{theorem}{Theorem}
\newtheorem{corollary}{Corollary}
\newtheorem{lemma}[corollary]{Lemma}
\newtheorem{proposition}[corollary]{Proposition}
\theoremstyle{definition}
\theoremstyle{definition}
\newtheorem*{acknow}{Acknowledgements}
\begin{document}
	
	\title[Low pseudomoments of the Riemann zeta function and its powers]{Low pseudomoments of the Riemann \\ zeta function and its powers}
	\author{Maxim Gerspach}
	\thanks{The author was supported by DFG-SNF lead agency program grant 200020L\textunderscore175755 and an associated SNF mobility in project grant}
	\date{\today}
	\address{ETH Z\"urich, Switzerland}
	\email{maxim.gerspach@math.ethz.ch}
	\maketitle
	
	\begin{abstract}
		The $2 q$-th pseudomoment $\Psi_{2q,\alpha}(x)$ of the $\alpha$-th power of the Riemann zeta function is defined to be the $2 q$-th moment of the partial sum up to $x$ of $\zeta^\alpha$ on the critical line. 
		Using probabilistic methods of Harper, we prove upper and lower bounds for these pseudomoments when $q \le \frac{1}{2}$ and $\alpha \ge 1$.
		Combined with results of Bondarenko, Heap and Seip, these bounds determine the size of all pseudomoments with $q > 0$ and $\alpha \ge 1$ up to powers of $\log \log x$, where $x$ is the length of the partial sum, and it turns out that there are three different ranges with different growth behaviours. In particular, the results give the order of magnitude of $\Psi_{2 q, 1}(x)$ for all $q > 0$.
	\end{abstract}
	
	\section{Introduction}\label{Intro}
	
	For $\alpha \in \mathbb{C}$, let $d_\alpha$ denote the generalised divisor functions, i.e.
	\[ \zeta(s)^\alpha = \sum_{n \ge 1} \frac{d_\alpha(n)}{n^s} \]
	for $\Re s > 1$. Further, we write
	\[ \Psi_{2 q, \alpha}(x) := \lim_{T \to \infty} \frac{1}{T} \int_0^T \left| \sum_{n \le x} \frac{d_\alpha(n)}{n^{1/2 + i t}} \right|^{2 q} \,dt \]
	and $\Psi_{2 q}(x) := \Psi_{2 q, 1}(x)$. The functions $\Psi_{2 q}(x)$ are called the \textit{pseudomoments} of the Riemann zeta function, and were first defined by Conrey and Gamburd \cite{ConreyGamburd1}. They proved that for $q \in \mathbb{N}$ we have
	\[ \Psi_{2 q}(x) \sim c_q (\log x)^{q^2}, \]
	where $c_q$ is an explicit constant that splits into an arithmetic and a geometric part, and where the arithmetic part coincides with the one that is conjectured for the integral moments of the Riemann zeta function.
	The case of non-integral values of $q$ was first considered by Bondarenko, Heap and Seip in \cite{BHS1}. Their result directly implies that for fixed $q > 0$ and $\alpha \ge 1$ we have
	\[ (\log x)^{(q \alpha)^2} \ll \Psi_{2q,\alpha}(x) \ll
	\begin{cases}
	(\log x)^{(q \alpha)^2}, &\mbox{ if } q > \frac{1}{2} \\
	(\log x)^{\alpha^2/4} \log \log x , &\mbox{ if } q = \frac{1}{2} \\
	(\log x)^{q \alpha^2/2} &\mbox{ if } q < \frac{1}{2}
	\end{cases}.
	\]
	For $\alpha = 1$ and $q \le \frac{1}{2}$, the upper bound was improved by Heap \cite{Heap1}, who established that
	\[ \Psi_{2q}(x) \ll (\log x)^{\alpha_q} (\log \log x)^{\frac{1}{2} - \alpha_q} \]
	with $\alpha_q = \frac{q}{4(1-q)}$.
	
	A rather natural guess to make at this point concerning the size of these pseudomoments would be that for all $\alpha \ge 1$ and $q > 0$ the right order of magnitude should be $(\log x)^{(q \alpha)^2}$. And not only because this holds for $q > \frac{1}{2}$, but also since one might perhaps expect the $2 q$-th pseudomoment of the $\alpha$-th power of $\zeta$ to correspond to the $2 q \alpha$-th moment $M_{2 q \alpha}(T)$ of $\zeta$, where
	\[ M_{2 k}(T) := \frac{1}{T} \int_T^{2 T} \left| \zeta \left( \frac{1}{2} + i t \right)  \right|^{2 k} \, dt. \]
	It is known unconditionally due to Heap, Radziwi{\l}{\l} and Soundararajan \cite{HRS1} that $M_{2 k}(T) \ll (\log T)^{k^2}$ when $0 < k \le 2$ , and due to Radziwi{\l}{\l} and Soundararajan \cite{RadziwillSound1} that $M_{2 k}(T) \gg (\log T)^{k^2}$ when $k \ge 1$. In fact, under the Riemann Hypothesis it is known that $M_{2 k}(T) \asymp (\log T)^{k^2}$ for all $k > 0$, the lower bound due to Ramachandra \cite{Ramachandra1,Ramachandra2} and Heath-Brown \cite{HeathBrown1} and the upper bound due to Harper \cite{Harper5} building on work of Soundararajan \cite{Sound1}.
	
	However, it turns out that this order of magnitude can not be correct, at least when $\alpha > 1$ and $q$ is very small depending on $\alpha$. This follows from a result of Bondarenko et al. \cite[Theorem 2]{BBSSZ1}, which implies that
	\[ \Psi_{2 q, \alpha}(x) \gg_\varepsilon (\log x)^{2 q \log \alpha - \varepsilon} \]
	for $q < \frac{1}{2}$ and $\alpha > 1$, hence larger than $(\log x)^{(q \alpha)^2}$ when $q$ is sufficiently small depending on $\alpha$.
	
	\subsection{Statement of Results}
	
	We note that throughout this work, the parameters $q$ and $\alpha$ will be fixed.
	
	\begin{theorem}\label{MainTheorem}
		Suppose that $1 \le \alpha < 2$. Then we have
		\[
		\Psi_{2q, \alpha}(x) \ll
		\begin{cases}
		(\log x)^{2 (\alpha - 1) q} , &\mbox{ if } 0 < q < \frac{2(\alpha - 1)}{\alpha^2} \\
		(\log x)^{(2 (\alpha - 1) / \alpha)^2} \log \log x &\mbox{ if } q = \frac{2(\alpha - 1)}{\alpha^2} \\
		(\log x)^{(q \alpha)^2} , &\mbox{ if } \frac{2(\alpha - 1)}{\alpha^2} < q \le \frac{1}{2} \\
		\end{cases}.
		\]
	\end{theorem}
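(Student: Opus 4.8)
The plan is to pass to the probabilistic model and then run a conditioning/barrier analysis in the spirit of Harper's work on low moments of random multiplicative functions. By orthogonality of the functions $t \mapsto n^{-it}$ (Carlson's theorem), $\Psi_{2q,\alpha}(x) = \mathbb{E}\bigl| \sum_{n \le x} d_\alpha(n) f(n) n^{-1/2} \bigr|^{2q}$, where $f$ is a Steinhaus random multiplicative function (completely multiplicative, with $(f(p))_p$ i.i.d.\ uniform on the unit circle). Writing $d_\alpha = d_1 * d_{\alpha-1}$ in the sense of Dirichlet convolution, which reflects $\zeta^\alpha = \zeta \cdot \zeta^{\alpha-1}$, the factor $d_{\alpha-1}$ is multiplicative with $0 \le d_{\alpha-1}(p^k) \le d_{\alpha-1}(p) = \alpha-1 < 1$ precisely because $1 \le \alpha < 2$; this bounded ``extra weight'' lets one treat $\zeta^\alpha$ on essentially the same footing as the $\alpha=1$ case, and is one main place the hypothesis $\alpha<2$ is used. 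Introduce the random Euler product $\mathcal{F}(s) := \prod_{p \le x} (1 - f(p) p^{-s})^{-\alpha}$. Computing with the local factors gives $\mathbb{E}|\mathcal{F}(\tfrac12+it)|^2 \asymp (\log x)^{\alpha^2}$ and $\mathbb{E}|\mathcal{F}(\tfrac12+\tfrac1{\log x})|^{2q} \asymp (\log x)^{(q\alpha)^2}$, and $|\mathcal{F}(\tfrac12+it)|^2$ models a Gaussian multiplicative chaos whose inverse temperature is proportional to $\alpha$ --- critical exactly at $\alpha=1$ and supercritical for $1<\alpha<2$. The maximum of the associated log-correlated field $\Re \log \mathcal{F}(\tfrac12+it)$ over $|t| \le \tfrac12$ is $\approx \alpha \log\log x$.

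Next I would use Perron's formula to expose the structure of the Dirichlet polynomial: shifting the contour to $\Re s = 1/\log x$ writes $\sum_{n \le x} d_\alpha(n) f(n) n^{-1/2}$ as $\tfrac{1}{2\pi i}\int_{(1/\log x)} \bigl(\prod_p (1 - f(p) p^{-1/2-s})^{-\alpha}\bigr) x^s s^{-1}\, ds$. The $p > x$ part of the product is a bounded random perturbation, so up to harmless errors this is $\tfrac{e}{2\pi}\int \mathcal{F}(\tfrac12+\tfrac1{\log x}+it)\, \tfrac{x^{it}}{1/\log x + it}\, dt$. Splitting at $|t| = 1/\log x$: on $|t| \le 1/\log x$ the kernel is $\asymp \log x$ and $\mathcal{F}$ is essentially constant (its finest oscillation is on scale $1/\log x$), so this piece is $\asymp \mathcal{F}(\tfrac12+\tfrac1{\log x})$; on $|t| > 1/\log x$ one is left with an integral of $\mathcal{F}(\tfrac12+\tfrac1{\log x}+it)$ against the oscillating kernel $x^{it}/(it)$. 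Hence $\mathbb{E}|\sum_{n \le x} d_\alpha(n) f(n) n^{-1/2}|^{2q} \ll \mathbb{E}|\mathcal{F}(\tfrac12+\tfrac1{\log x})|^{2q} + \mathbb{E}|\mathcal{T}|^{2q}$, where $\mathcal{T}$ is the tail integral. The first term is $\asymp (\log x)^{(q\alpha)^2}$, which is already the expected answer when $q$ is not too small.

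The tail term $\mathcal{T}$ is where the other two regimes come from, and where the real work lies. Because $\mathcal{F}(\tfrac12+\tfrac1{\log x}+it)$ varies on scale $1/\log x$ and is integrated against a kernel oscillating with period $\asymp 1/\log x$ and decaying like $1/|t|$, the size of $\mathcal{T}$ is controlled by the peaks of $|\mathcal{F}(\tfrac12+it)|$: a peak of height $(\log x)^\alpha e^{r}$ at some $|t^*| \asymp 1$, of width $\asymp 1/\log x$, contributes $\asymp (\log x)^{\alpha-1} e^{r}$. Thus $\mathbb{E}|\mathcal{T}|^{2q}$ is governed by the upper tail of $M := \max_{|t| \le 1/2} \Re \log \mathcal{F}(\tfrac12+it)$, and the crucial input is the sharp estimate $\mathbb{P}(M \ge \alpha \log\log x + r) \ll (1+r)\, e^{-2r/\alpha}$, which I would prove by a Harper-style conditioning and second-moment argument: split the primes $\le x$ into $\asymp \log\log x$ blocks, condition successively on the $f(p)$ in each block, and carry out a barrier argument showing the main contribution comes from realisations for which the random walk $y \mapsto \alpha \sum_{p \le y} \Re(f(p) p^{-1/2})$ (and its $t$-shifts) stays below the appropriate line. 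Given this, $\mathbb{E}|\mathcal{T}|^{2q} \ll (\log x)^{2q(\alpha-1)} \int_0^\infty e^{2qr}(1+r) e^{-2r/\alpha}\, dr \ll (\log x)^{2q(\alpha-1)}$, the integral converging since $q \le \tfrac12 < 1/\alpha$ for $\alpha<2$. Combining gives the bound $(\log x)^{(q\alpha)^2} + (\log x)^{2q(\alpha-1)}$, which is the outer two cases of the theorem; for $q = \tfrac{2(\alpha-1)}{\alpha^2}$ the two mechanisms have equal strength, and a finer accounting --- summing the contributions over the $\asymp \log\log x$ scales $|t| \in (1/\log x,1)$, which all become comparable at this threshold --- produces the extra factor $\log\log x$.

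The step I expect to be the main obstacle is establishing the sharp upper-tail bound for $M$ with the correct constant $2/\alpha$ (equivalently, the correct velocity for the log-correlated field built from $\Re \log \mathcal{F}$): a crude bound that loses the polynomial prefactor $(1+r)$, or the constant, would degrade the exponent $2(\alpha-1)q$, and the barrier argument producing it --- controlling restricted second moments of the partial Euler products over the $\log\log x$ prime blocks --- is the technical heart. Alongside this, one must make rigorous that $\mathcal{T}$ is genuinely dominated by the peaks of $|\mathcal{F}|$ (ruling out a conspiracy of many medium-height peaks) and control the various perturbations --- the shift $1/\log x$, the $p>x$ Euler factors, the $d_{\alpha-1}$-weights, and the oscillatory cancellation in the kernel --- uniformly in $x$; this bookkeeping, together with the delicate balance at the threshold $q = \tfrac{2(\alpha-1)}{\alpha^2}$, is where $\alpha<2$ really bites, since at $\alpha=2$ the tail integral is only marginally divergent.
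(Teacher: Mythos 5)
Your proposal takes a genuinely different route from the paper, and while the heuristic landscape you describe is broadly correct, two of your structural claims are off and the one step you identify as the ``technical heart'' is in fact a deep result that the paper is specifically engineered to avoid.

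First, the role of $\alpha < 2$. You attribute it to the boundedness of $d_{\alpha-1}$ after writing $d_\alpha = d_1 * d_{\alpha-1}$; this is not where the restriction comes from, and the paper never uses such a decomposition. The hypothesis $\alpha < 2$ enters because, after reducing to $\mathbb{E}\bigl[\bigl(\int_T^{2T}|F(1/2+it)|^{2\alpha}\,dt\bigr)^q\bigr]$ and weighting by the kernel $T^{-2q}$, the dyadic contributions from $T\ge 1$ scale like $T^{(\alpha-2)q}(\log x)^{2(\alpha-1)q}$ (Proposition \ref{LargeT} combined with Proposition \ref{REPLast}); this is summable over dyadic $T$ precisely when $\alpha < 2$. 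For $\alpha > 2$ the dominant contribution migrates to $T \asymp (\log x)^{\alpha^2/4-1}$ and the answer changes to $(\log x)^{q\alpha^2/2}$, which is the BHS bound --- there is nothing to prove there.

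Second, and more seriously: your plan routes everything through the sharp upper-tail estimate $\mathbb{P}(M \ge \alpha\log\log x + r) \ll (1+r)e^{-2r/\alpha}$ for the maximum of the log-correlated field, plus the assertion that the tail integral $\mathcal{T}$ is genuinely dominated by a single peak. Both are plausible, but the tail estimate is of the same depth as the Arguin--Belius--Harper-type results on maxima of random Euler products, requiring a full barrier/recursive-conditioning argument, and the ``dominated by a single peak'' step (ruling out a conspiracy of medium peaks, and handling the oscillation in the kernel $x^{it}/s$ that you discard by the triangle inequality) is a second substantial theorem. The paper deliberately avoids needing either. It replaces Perron by Plancherel (Lemma \ref{Plancherel}), which is an exact identity with no truncation error --- a significant gain, since Perron for a random Dirichlet series with no zero-free region would need its own error analysis. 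It then decomposes by the largest prime factor $P(n)$ (Propositions \ref{REP}, \ref{REP2}) rather than by prime blocks, and bounds $\mathbb{E}\bigl[\bigl(\int_T^{2T}|G_k|^{2\alpha}\bigr)^q\bigr]$ on each dyadic scale by a tailored application of H\"older's inequality, redistributing exponents between small and large primes, together with a crude Chebyshev union bound over the exceptional events (Propositions \ref{SmallT}--\ref{LargeT}). This gives the right powers of $\log x$ with only $O(1)$ losses per scale, with no need for a sharp maximum. You should also note that pulling the $2q$-th moment inside the tail integral, with $2q<1$, does not go through subadditivity the way it would for a discrete sum; the paper sidesteps this too by first passing to a squared integrand via Plancherel and then applying H\"older directly at the level of random variables.

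In short: your approach is a coherent but considerably harder program than what is actually done, and the one claim you mark as the technical heart is precisely the thing the paper's method is designed not to need.
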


	In particular, if $\alpha = 1$, this result together with the lower bound of Bondarenko, Heap and Seip for $q > 0$ and their upper bound for $q > \frac{1}{2}$ imply the following
	
	\begin{corollary}
		For all $q > 0$, we have
		\[\Psi_{2 q}(x) \asymp (\log x)^{q^2}. \]
	\end{corollary}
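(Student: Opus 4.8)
The plan is to assemble the upper and lower bounds already recorded in the introduction, specialised to the case $\alpha = 1$. The lower bound requires no work: the result of Bondarenko, Heap and Seip gives $\Psi_{2q}(x) = \Psi_{2q,1}(x) \gg (\log x)^{q^2}$ for every fixed $q > 0$, which is precisely the lower bound claimed in the corollary.

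For the matching upper bound I would split the range of $q$ at $\tfrac12$. When $q > \tfrac12$, the upper bound of Bondarenko, Heap and Seip already yields $\Psi_{2q}(x) \ll (\log x)^{q^2}$. When $0 < q \le \tfrac12$, I would invoke Theorem~\ref{MainTheorem} with $\alpha = 1$. The key elementary observation is that at $\alpha = 1$ the threshold $\tfrac{2(\alpha-1)}{\alpha^2}$ equals $0$, so the first two cases of the theorem (which would require $q < 0$, respectively $q = 0$) are vacuous, and the third case applies throughout the interval $0 < q \le \tfrac12$, giving exactly $\Psi_{2q}(x) \ll (\log x)^{(q\alpha)^2} = (\log x)^{q^2}$. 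Combining the two ranges yields $\Psi_{2q}(x) \ll (\log x)^{q^2}$ for all $q > 0$, and together with the lower bound this gives $\Psi_{2q}(x) \asymp (\log x)^{q^2}$.

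Since this is a direct corollary, there is no genuine obstacle: the only point to verify is that the parameter ranges in Theorem~\ref{MainTheorem} and in the Bondarenko--Heap--Seip bounds overlap without a gap, which the computation of the threshold at $\alpha = 1$ confirms (in particular the endpoint $q = \tfrac12$ is covered by Theorem~\ref{MainTheorem} with no extra $\log\log x$ factor). All the substantive analysis is contained in Theorem~\ref{MainTheorem} itself, whose proof via Harper's probabilistic method comes later.
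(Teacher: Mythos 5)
Your proposal is correct and matches the paper's own argument: the corollary is obtained exactly by combining the Bondarenko--Heap--Seip lower bound for all $q>0$ and their upper bound for $q>\tfrac12$ with Theorem~\ref{MainTheorem} at $\alpha=1$, where the threshold $\tfrac{2(\alpha-1)}{\alpha^2}=0$ leaves only the third case, covering all $0<q\le\tfrac12$ including the endpoint. Nothing further is needed.
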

	
	In terms of lower bounds, we prove that
	
	\begin{theorem}\label{LowerBoundThm}
		For all $1 \le \alpha \le 2$ and $0 < q < \frac{2(\alpha-1)}{\alpha^2}$, we have
		\[ \Psi_{2q,\alpha}(x) \gg \frac{(\log x)^{2(\alpha-1)q}}{(\log \log x)^{6 + \alpha q}} \]
		when $x$ is sufficiently large. For all $\alpha > 2$ and $0 < q < \frac{1}{2}$, we have
		\[ \Psi_{2q,\alpha}(x) \gg \frac{(\log x)^{q \alpha^2/2}}{(\log \log x)^{2 \alpha^2 / 3 + 5 + q \alpha}} \]
		when $x$ is sufficiently large.
	\end{theorem}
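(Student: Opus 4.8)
The plan is to pass to the usual probabilistic model and then run an argument in the style of Harper's low-moment work. Since the logarithms of the primes are linearly independent over $\mathbb{Q}$, Weyl equidistribution of $(t\log p)_p$ identifies the limit as $\Psi_{2q,\alpha}(x)=\mathbb{E}\bigl|\mathcal{D}_\alpha(x)\bigr|^{2q}$, where $\mathcal{D}_\alpha(x):=\sum_{n\le x}d_\alpha(n)X(n)n^{-1/2}$, the $X(p)$ are i.i.d.\ Steinhaus variables and $X$ is extended completely multiplicatively. Write $\rho=\rho(\alpha)=\alpha-1$ for $1\le\alpha\le2$ and $\rho=\alpha^2/4$ for $\alpha>2$, so that in both regimes the target is $(\log x)^{2\rho q}(\log\log x)^{-O(1)}$ and, crucially, $(\log x)^{2\rho q}$ is exactly the order of the matching upper bound (Theorem~\ref{MainTheorem} when $1\le\alpha<2$, and the Bondarenko--Heap--Seip bound when $\alpha\ge2$). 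Because $0<q<\tfrac12<1$ one cannot use H\"older; instead I would use the elementary inequality that if $W\ge0$ and $0\le W\le V$ on an event $\mathcal{E}$ then $\mathbb{E}[W^q]\ge V^{q-1}\mathbb{E}[W\mathbf 1_{\mathcal{E}}]$, applied with $W=|\mathcal{D}_\alpha(x)|^2$. Feeding in the upper bound via Markov gives $\mathbb{P}\bigl(|\mathcal{D}_\alpha(x)|>(\log\log x)^{A}(\log x)^\rho\bigr)\ll(\log\log x)^{-2Aq}$, negligible for $A$ a large constant; taking $\mathcal{E}=\{|\mathcal{D}_\alpha(x)|\le(\log\log x)^{A}(\log x)^\rho\}$ and $V=(\log\log x)^{2A}(\log x)^{2\rho}$, the whole theorem reduces (again using the upper bound to discard the mass above level $(\log x)^\rho$) to the \emph{moderate-deviation lower bound}
\[
\mathbb{P}\Bigl(|\mathcal{D}_\alpha(x)|\ge c\,(\log x)^\rho\Bigr)\gg(\log\log x)^{-O(1)}
\]
for some $c=c(\alpha)>0$; this is the real content.

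To prove the moderate-deviation bound I would use Harper's conditioning scheme. Fix a cutoff $P=P(x)$ and split each $n\le x$ as $n=ab$ with $a$ being $P$-smooth and $b$ being $P$-rough; by multiplicativity of $d_\alpha$ and complete multiplicativity of $X$ this gives $\mathcal{D}_\alpha(x)=\sum_{a\ P\text{-smooth}}d_\alpha(a)X(a)a^{-1/2}B_a$ with $B_a:=\sum_{b\le x/a,\ P^-(b)>P}d_\alpha(b)X(b)b^{-1/2}$, each $B_a$ having conditional mean $1$ given $\mathcal{F}_P:=\sigma(X(p):p\le P)$. One then constructs a good event $\mathcal{G}\in\mathcal{F}_P$ -- a barrier/ballot-type event on the increments of the random walk $t\mapsto\alpha\sum_{p\le t}\Re X(p)p^{-1/2}$ that governs the conditional mean $\mathbb{E}[\mathcal{D}_\alpha(x)\mid\mathcal{F}_P]=\sum_{a\ P\text{-smooth}}d_\alpha(a)X(a)a^{-1/2}$ -- so that $\mathbb{P}(\mathcal{G})\gg(\log\log x)^{-O(1)}$ precisely when the slope is chosen correctly, and so that on $\mathcal{G}$ the conditional second moment $\mathbb{E}[|\mathcal{D}_\alpha(x)|^2\mid\mathcal{F}_P]$ is $\asymp(\log x)^{2\rho}$ up to polyloglog. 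A conditional anti-concentration argument -- a second-moment (Paley--Zygmund) estimate on the large-prime fluctuation $\mathcal{D}_\alpha(x)-\mathbb{E}[\mathcal{D}_\alpha(x)\mid\mathcal{F}_P]$, which for $P$ chosen close enough to $x$ lives in a subcritical range of scales and hence behaves like its $L^2$ norm -- then upgrades this to the required conditional probability that $|\mathcal{D}_\alpha(x)|\ge c(\log x)^\rho$.

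The main obstacle, and the origin of the phase transition at $\alpha=2$, is arranging simultaneously that $\mathbb{P}(\mathcal{G})$ stays $\gg(\log\log x)^{-O(1)}$, that the conditional second moment on $\mathcal{G}$ is of the full size $(\log x)^{2\rho}$, and that the fluctuation remains controllable. The naive heuristic that $\mathcal{D}_\alpha(x)$ behaves like the random Euler product $\prod_{p\le x}(1-X(p)p^{-1/2})^{-\alpha}$ only produces $(\log x)^{q^2\alpha^2}$, which is too small in every case; one genuinely needs that the truncated sum is larger than the Euler product, and this enters through the two arithmetic mean-value inputs $\sum_{n\le y}d_\alpha(n)^2n^{-1}\asymp(\log y)^{\alpha^2}$ and $\sum_{n\le y}d_\alpha(n)n^{-1/2}\asymp y^{1/2}(\log y)^{\alpha-1}$ (equivalently the factorisations $\sum d_\alpha(n)^2n^{-s}=\zeta(s)^{\alpha^2}G_\alpha(s)$ and $\sum d_\alpha(n)n^{-s}=\zeta(s)^{\alpha}$, with $G_\alpha$ holomorphic and non-vanishing for $\Re s>\tfrac12$). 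Balancing these in the ballot optimisation selects $\rho=\alpha-1$ when $\alpha\le2$ and $\rho=\alpha^2/4$ when $\alpha>2$; the regime $\alpha>2$ is heavier because the large-prime fluctuation is no longer safely subcritical unless one first extracts a $\zeta^2$-factor (via $d_\alpha=d_2\ast d_{\alpha-2}$), which is what forces the extra $(\log\log x)$-power depending on $\alpha^2$. Tracking all the polylogarithmic losses -- from the normalisation in the ballot estimate, the passage from $L^2$ to $L^{2q}$, and the Markov step -- yields the stated exponents $6+\alpha q$ and $2\alpha^2/3+5+q\alpha$.
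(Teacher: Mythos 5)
Your reduction to a moderate-deviation probability lower bound $\mathbb{P}\bigl(|\mathcal{D}_\alpha(x)|\ge c(\log x)^\rho\bigr)\gg(\log\log x)^{-O(1)}$ is a valid formal reduction, but the proposal then diverges entirely from the paper's proof, and I do not think the proposed ballot/barrier route closes the gap.

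The paper never works directly with $\mathcal{D}_\alpha(x)$ as a function of the $X(p)$. Instead, Propositions \ref{LowerBoundProp} and \ref{LowerBoundPropLargeAlpha} use a Rademacher symmetrisation plus Khintchine's inequality (restricting to $n\le x$ with $P(n)>\sqrt{x}$), a smoothing step, and Parseval/Plancherel to replace $\mathcal{D}_\alpha(x)$ by an integral $\int_{|t|<L}\lvert F(\tfrac12+\tfrac{2V}{\log x}+it)\rvert^{2\alpha}\,dt$ of the random Euler product, with $L=1$ for $1\le\alpha\le 2$ and $L=(\log x)^{\alpha^2/4-1}$ for $\alpha>2$. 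Jensen's inequality then turns this integral into a maximum of $\approx L\log x$ nearly-Gaussian log-Euler-products over a discretised grid in $t$, after which a multivariate central limit theorem and a lower bound for maxima of (correlated) Gaussian processes from Harper's earlier work finish the job; the $(\log\log x)^{6+\alpha q}$ and $(\log\log x)^{2\alpha^2/3+5+q\alpha}$ losses come out of this Gaussian-process step and (for $\alpha>2$) from the available error term in the prime number theorem.

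Your scheme — conditioning on $\mathcal{F}_P=\sigma(X(p):p\le P)$ and constructing a ballot/barrier event $\mathcal{G}$ on the prime-indexed random walk — is a genuinely different mechanism, and I see two concrete problems with it. First, and most seriously, the exponent $\rho=\alpha^2/4$ for $\alpha>2$ arises in the paper because the dominant contribution comes from the Euler product achieving its maximum over a $t$-window of length $(\log x)^{\alpha^2/4-1}$; this is a $t$-averaging phenomenon (the number of near-independent sample points is $L\log x$), and a barrier condition on the partial sums $\sum_{p\le y}\Re X(p)p^{-1/2}$ as a function of the truncation level $y$ carries no information about this horizontal length scale. Your suggestion that extracting a $\zeta^2$-factor via $d_\alpha=d_2\ast d_{\alpha-2}$ would produce $\rho=\alpha^2/4$ is not substantiated: there is no obvious way that this convolution identity encodes the transition at $\alpha=2$, and the paper obtains that transition purely from optimising $T^{-2q}\exp(2\alpha q\sqrt{(\log\log x)^2+\log\log x\log T})$ over $T\ge 1$. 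Second, even in the range $1<\alpha\le 2$ where only $T\ll 1$ matters, your description of the barrier event and of the conditional second-moment computation stays at the level of intention: you would need to show that on $\mathcal{G}$ the conditional mean $\sum_{a\ P\text{-smooth}}d_\alpha(a)X(a)a^{-1/2}$ is of size $\asymp(\log x)^{\alpha-1}$ while the conditional fluctuation is comparable, and the conditional second moment $\mathbb{E}\bigl[\lvert\mathcal{D}_\alpha(x)\rvert^2\mid\mathcal{F}_P\bigr]$ is not a clean quadratic form in the $B_a$ because $\mathbb{E}[B_a\overline{B_{a'}}]$ depends on $\max(a,a')$. None of this is spelled out, and without it the claimed $(\log\log x)$ exponents cannot be extracted. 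In short, this is a different route that would require developing substantial new machinery, and the step that would make it match the paper's $\alpha>2$ exponent appears to be missing.
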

	
	Combining this with the results of Bondarenko, Heap and Seip \cite{BHS1}, we get the following table. The column labelled exponent of $\log x$ refers to the correct exponent of $\log x$ for the size of the pseudomoments on the given range of $\alpha$ and $q$, thus ignoring all factors $(\log \log x)^{O(1)}$. The column labelled $\theta$ refers to the value of $\theta$ such that the main contribution in terms of moments of integrals of random Euler products comes from around $(\log x)^{\theta}$, and might become more clear after reading the heuristic discussion that follows.
	
	\vspace{10pt}
	\begin{center}
		{\renewcommand{\arraystretch}{1.5}
		\begin{tabular}{ | c | c | c | c | c | c |}
			\hline
			range of $\alpha$ & range of $q$ & exponent of $\log x$ & $\theta$ & upper bound & lower bound \\ \hline \hline
			$1 \le \alpha < 2$ & $0 < q \le \frac{2(\alpha-1)}{\alpha^2}$ & $2(\alpha-1)q$ & 0 & Theorem \ref{MainTheorem} & Theorem \ref{LowerBoundThm} \\
			\hline
			$1 \le \alpha < 2$ & $\frac{2(\alpha-1)}{\alpha^2} < q < \frac{1}{2}$ & $(q \alpha)^2$ & $-1$ & Theorem \ref{MainTheorem} & \cite{BHS1} \\
			\hline
			$\alpha \ge 2$ & $0 < q < \frac{1}{2}$ & $\frac{q \alpha^2}{2}$ & $\frac{\alpha^2}{4} - 1$ & \cite{BHS1} & Theorem \ref{LowerBoundThm} \\ \hline
			$\alpha \ge 1$ & $ q \ge \frac{1}{2}$ & $(q \alpha)^2$ &  & \cite{BHS1} & \cite{BHS1} \\ \hline
		\end{tabular}
	}
	\end{center}

	\vspace{10pt}
	
	\subsection{Proof strategy and heuristic discussion}
	
	Note that our methods are of a rather different nature to those of \cite{BHS1}, who apply ideas of a more functional-analytic flavour. They provide general inequalities in the Hardy space of Dirichlet series, where the norm is defined in such a way that, when applied to the divisor functions, one obtains the pseudomoments (up to normalisation).
	
	On the other hand, we follow a more probabilistic approach along the lines of work of Harper \cite{Harper3,Harper1}. We will now try to explain the basic strategy of the proof, and give a heuristic argument why one should expect Theorems \ref{MainTheorem} and \ref{LowerBoundThm} to hold. 
	
	A random completely multiplicative function $f : \mathbb{N} \to \mathbb{C}$ whose values at the primes are independent and uniformly distributed on the complex unit circle will be called a \textit{Steinhaus random multiplicative function}.
	
	The first step is to note that by the Bohr correspondence (see e.g. \cite[Section 3]{SaksmanSeip1}), we have
	\[ \Psi_{2 q, \alpha}(x) = \mathbb{E} \Big[ \, \Big| \sum_{n \le x} \frac{d_\alpha(n) f(n)}{\sqrt{n}} \Big|^{2 q} \, \Big]. \]
	
	One then shows that, roughly speaking, we have
	\[ \Psi_{2q, \alpha}(x) \approx \frac{1}{(\log x)^q} \mathbb{E} \bigg[ \Big( \int_{\mathbb{R}} \frac{|F(1/2+it)|^{2 \alpha}}{|\frac{1}{\log x} + i t|^2}\, dt \Big)^q \bigg], \tag{$\star$} \]
	where
	\[ F(s) = \prod_{p \le x} \left( 1 - \frac{f(p)}{p^s} \right)^{-1} \]
	is the Euler product associated to $f$. This step is somewhat similar to \cite[Proposition 1 and 3]{Harper1}, although especially for the upper bound, adaptations have to be made. We remark that proving this in fact constitutes a rather significant part of this work, namely all of section \ref{REPSection}. The (random) Euler product has the big advantage over the random sum in that it is a product of independent random variables and thus much more tractable to probabilistic methods, whereas the initial sum had a rather intricate dependency structure. We can then divide the integral into dyadic ranges and are thus left with the task of bounding expressions of the type
	\[ \mathbb{E} \bigg[ \Big( \int_T^{2 T} |F(1/2+it)|^{2 \alpha} \, dt \Big)^q \bigg] \]
	with $T \gg \frac{1}{\log x}$. Note that in $(\star)$ we also get a term coming from $|t| \le \frac{1}{\log x}$, but using that $F$ is translation-invariant in law, one can verify that it gives the same contribution as $T = \frac{2}{\log x}$. Now since
	\[ F(1/2+it) = \prod_{p \le x} \left( 1 - \frac{f(p) e^{-i t \log p}}{\sqrt{p}} \right)^{-1}, \]
	what happens is that the Euler factor is roughly constant over the whole range $[T,2 T]$ precisely when $ T\log p \ll 1$. However, this can only happen when $T \ll 1$. Thus, the problem naturally splits into the ranges $\frac{1}{\log x} \ll T \ll 1$ and $T \gg 1$. 
	
	On the first range, the contribution to the Euler product is roughly constant on the range $p \le \exp(1/T)$, so on this range of $T$ it turns out to be useful to split the Euler product into the ``small" primes, which satisfy this condition, and the ``large" primes $p > \exp(1/T)$. On the range $T \gg 1$ on the other hand there are only large primes in the sense that no Euler product factors are expected to be constant over the whole range of integration.
	
	These ideas strongly resemble observations made in \cite{AOR1}. There, the authors analyse the behaviour of 
	\[ \int_{|h| < (\log H)^\theta} \left|\zeta \left( \frac{1}{2} + i t + i h \right) \right|^{2 q} \, dh \]
	for $q > 0,\, \theta > -1$ and "most" values of $t$. There, it turns out that the problem also naturally splits into $- 1 < \theta < 0$ and $\theta > 0$.
	In fact, the resemblance goes further - in their work, it also turns out to be useful to split into small and large primes in a very similar way to our strategy here. Their basic idea is to relate $\zeta$ on random intervals to random quantities that are quite similar to the ones studied in this work.
	
	Now suppose that $\frac{1}{\log x} \ll T \ll 1$. Let $F^s$ be the Euler product over the small primes ($\le \exp(1/T)$) and $F^l$ the Euler product over the large primes. Heuristically, because the Euler product over the small primes is roughly constant and since the values $f(p)$ at different primes are independent, we should be able to pull the small prime contribution out to deduce that
	\[ \mathbb{E} \bigg[ \Big( \int_T^{2 T}|F(1/2+it)|^{2 \alpha}\, dt \Big)^q \bigg] \approx \mathbb{E} [ \, |F^s(1/2)|^{2 \alpha q } \,] \mathbb{E} \bigg[ \Big( \int_T^{2 T} |F^l(1/2+it)|^{2 \alpha}\, dt \Big)^q \bigg].\]
	Moreover, an elementary computation shows that
	\[\mathbb{E} [ \, |F^s(1/2)|^{2 \alpha q } \,] \asymp T^{-(q \alpha)^2}. \]
	Since $e^{-i t \log p}$ roughly changes on a scale of $\frac{1}{\log x}$, at least when $p$ is fairly large, one might guess that
	\[ \int_T^{2 T}|F^l(1/2+it)|^{2 \alpha} \, dt \approx \frac{1}{\log x} \sum_{T \log x < n \le 2 T \log x} \left|F^l \left(\frac{1}{2} + i \frac{n}{\log x} \right) \right|^{2 \alpha}.  \]
	Next, note that by Taylor expansion we have
	\[ \log |F^l(1/2 + it)| \approx \sum_{\exp(1/T) < p \le x} \frac{\Re (f(p) p^{-it})}{\sqrt{p}}.  \]
	This is a sum of independent random variables whose individual contributions are not too large, so that one might expect this to be roughly a Gaussian with mean $0$ and variance
	\[ \sum_{\exp(1/T) < p \le x} \frac{\mathbb{E}[\Re(f(p) p^{-it})^2]}{p} \approx \frac{1}{2}(\log \log x + \log T). \]
	Thus, one might model $\int_T^{2 T}|F^l(1/2+it)|^{2 \alpha} \, dt$ by the sum of $T \log x$ random variables given by the exponentials of Gaussians with mean $0$ and variance $(\log \log x + \log T)/2$. One might hope that these random variables are not too correlated and thus replace them by independent ones. This is perhaps the most unclear step in this heuristic argument, and likely leads to correction factors of size $(\log \log x)^{O(1)}$, as is featured in the more-than-squareroot cancellation observed in \cite{Harper1}. Elementary probabilistic calculations show that the sum that arises is typically dominated by the largest summand. Since the maximum of $n$ independent Gaussians with mean $0$ and variance $\sigma^2$ is $\approx \sigma \sqrt{2 \log n}$ with high probability, we obtain that
	\[ \max_{T \log x < n \le 2 T \log x} \left|F^l \left(\frac{1}{2} + i \frac{n}{\log x} \right) \right| \approx \exp \left(\sqrt{2 (\log \log x + \log T)} \sqrt{\frac{1}{2} (\log \log x + \log T)} \right) = T \log x. \]
	Putting these heuristic estimates together, one concludes that
	\[ \mathbb{E} \bigg[ \Big( \int_T^{2 T} |F^l(1/2+it)|^{2 \alpha}\, dt \Big)^q \bigg] \approx \frac{(T \log x)^{2 \alpha q}}{(\log x)^q}. \]
	The contribution of $t \in [T,2T]$ to ($\star$) is thus
	\[ \approx (\log x)^{-q} T^{-2 q} T^{-(q \alpha)^2} T^{2 \alpha q} (\log x)^{2 \alpha q - q} = T^{2 (\alpha - 1) q - (q \alpha)^2 } (\log x)^{2 (\alpha - 1) q} \]
	when $\frac{1}{\log x} \ll T \ll 1$. Note that the exponent of $T$ is positive iff $q < \frac{2(\alpha - 1)}{\alpha^2}$. Plugging in the respective extremes of the range give a contribution of roughly
	\[\begin{cases}
		(\log x)^{2(\alpha - 1)q } &\mbox{ if } 0 < q \le \frac{2(\alpha - 1)}{\alpha ^2} \\
		(\log x)^{(q \alpha)^2} &\mbox{ if } \frac{2(\alpha - 1)}{\alpha^2} < q < \frac{1}{\alpha}
	\end{cases}\]
	to the pseudomoments. The reader might want to compare this to Theorem \ref{MainTheorem}.
	
	One can employ the same heuristic for $T \gg 1$. In this case, there are only large primes, so the argument simplifies somewhat. We therefore have $T \log x$ random variables that are roughly the exponential of a Gaussian with mean $0$ and variance $\frac{1}{2} \log \log x$, so that the maximum should roughly be
	\[ \exp \left( \sqrt{2(\log \log x + \log T)} \sqrt{\frac{1}{2} \log \log x} \right) = \exp(\sqrt{(\log \log x)^2 + \log \log x \log T}). \]
	On this range, the contribution of $t \in [T, 2T]$ to ($\star$) is hence
	\[ \approx T^{-2 q} (\log x)^{-2 q} \exp \left( 2 \alpha q \sqrt{(\log \log x)^2 + \log \log x \log T}\right) \]
	when $0 < q < \frac{1}{\alpha}$. Maximizing this in terms of $T$ on the range $T \gg 1$ gives a contribution of 
	\[  
	\begin{cases}
	(\log x)^{q \alpha^2/2}, &\mbox{ if } \alpha > 2 \text{ at } T = (\log x)^{\alpha^2/4-1} \\
	(\log x)^{2 (\alpha - 1) q}, &\mbox{ if } 1 \le \alpha \le 2 \text{ at } T = 1
	\end{cases}.
	\]
	This suggests that for $\alpha > 2$ the upper bound of Bondarenko, Heap and Seip should in fact be roughly the correct answer, as is confirmed by Theorem \ref{LowerBoundThm}.
	
	\begin{acknow}
		I would like to thank Adam Harper for suggesting this problem to me, and for numerous invaluable discussions and comments. I would also like to thank the University of Warwick for their hospitality during my stay in the summer of 2019.
	\end{acknow}
	
	\section{Reduction to moments of integrals of random Euler products}\label{REPSection}
	
	We begin by recording the following version of Plancherel's identity for Dirichlet series (see e.g. \cite[(5.26)]{MontVaughan}).
	
	\begin{lemma}\label{Plancherel}
		Let $(a_n)_{n \ge 1}$ be a sequence of complex numbers, and let $A(s) := \sum_{n \ge 1} \frac{a_n}{n^s}$ be the corresponding Dirichlet series with abscissa of convergence $\sigma_c$. Then for any $\sigma > \max \{ 0, \sigma_c \}$, we have
		\[ \int_0^\infty \frac{\big| \sum_{n \le x} a_n \big|^2}{x^{1+ 2 \sigma}} \, dx = \frac{1}{2 \pi} \int_{\mathbb{R}} \left| \frac{A(\sigma + i t)}{\sigma+i t} \right|^2 \, dt. \]
	\end{lemma}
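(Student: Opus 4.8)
The plan is to recognise the left-hand side as the squared $L^2$-norm of a function on $\mathbb{R}$ whose Fourier transform is $A(\sigma+it)/(\sigma+it)$, and then to invoke the classical Plancherel theorem for the Fourier transform. Write $S(x) := \sum_{n \le x} a_n$ for the summatory function, so that $S(x) = 0$ for $x < 1$. First I would record the standard growth bound $S(x) \ll_\varepsilon x^{\max\{0,\sigma_c\}+\varepsilon}$: if $\sigma_c < 0$ the series $\sum_n a_n$ converges and $S$ is bounded, while otherwise it follows from the convergence of $A(s)$ on $\Re s > \sigma_c$ together with partial summation.

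Next, Abel summation gives, for $\Re s > \max\{0,\sigma_c\}$,
\[ \sum_{n \le N} \frac{a_n}{n^s} = \frac{S(N)}{N^s} + s \int_1^N \frac{S(x)}{x^{s+1}}\,dx, \]
and letting $N \to \infty$ the boundary term vanishes by the growth bound together with $\Re s > \max\{0,\sigma_c\}$, so that, using $S \equiv 0$ on $(0,1)$,
\[ \frac{A(s)}{s} = \int_0^\infty \frac{S(x)}{x^{s+1}}\,dx. \]
Substituting $x = e^u$ and writing $s = \sigma + it$, the right-hand side becomes $\int_{\mathbb{R}} g(u)\, e^{-itu}\,du$, where $g(u) := S(e^u)\, e^{-\sigma u}$; thus $t \mapsto A(\sigma+it)/(\sigma+it)$ is precisely the Fourier transform $\widehat{g}$ of $g$. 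The hypothesis $\sigma > \max\{0,\sigma_c\}$ ensures, via the growth bound, that $g$ vanishes on $(-\infty,0)$ and decays exponentially as $u \to +\infty$, hence $g \in L^1(\mathbb{R}) \cap L^2(\mathbb{R})$.

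Finally I would apply Plancherel, $\int_{\mathbb{R}} |\widehat{g}(t)|^2\,dt = 2\pi \int_{\mathbb{R}} |g(u)|^2\,du$, and undo the substitution via $u = \log x$ on the norm side:
\[ \int_{\mathbb{R}} |g(u)|^2\,du = \int_{\mathbb{R}} |S(e^u)|^2\, e^{-2\sigma u}\,du = \int_0^\infty \frac{|S(x)|^2}{x^{1+2\sigma}}\,dx, \]
which rearranges to the asserted identity. The only step requiring genuine care is the passage between the Dirichlet-series and Fourier pictures — namely verifying that $g \in L^2$ and that $\widehat{g}$ truly equals $A(\sigma+it)/(\sigma+it)$ rather than agreeing with it only formally — but this is entirely controlled by the elementary bound on $S(x)$ and the strict inequality $\sigma > \max\{0,\sigma_c\}$, so I do not expect any real obstacle; the lemma is in essence a change of variables feeding into Plancherel's theorem.
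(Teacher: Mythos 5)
Your argument is correct: the Abel-summation identity $A(s)/s=\int_0^\infty S(x)x^{-s-1}\,dx$, the exponential change of variables, and the classical Plancherel theorem (justified by the bound $S(x)\ll_\varepsilon x^{\max\{0,\sigma_c\}+\varepsilon}$ and the strict inequality $\sigma>\max\{0,\sigma_c\}$) give exactly the stated identity with the right constant. The paper itself offers no proof, citing Montgomery--Vaughan (5.26), and your derivation is precisely the standard argument behind that reference, so there is nothing to add.
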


	Furthermore, we will record the following bound from \cite[Number Theory Result 1]{Harper1}, compare also Lau, Tenenbaum and Wu \cite[Lemma 2.1]{LTW1}. We write $\Omega(n)$ for the number of prime factors of $n$ counted with multiplicity.
	
	\begin{lemma}\label{NTR1}
		Let $0 < \delta < 1$ and $\alpha \ge 1$. Suppose that $\max \{ 3 ,2 \alpha \} \le y \le z \le y^{10}$ (say) and $1 < u \le v(1-y^{- \delta})$. Then we have
		\[ \sum_{\substack{ u \le n \le v \\ p \, | \, n \Rightarrow y < p \le z }} \alpha^{\Omega(n)} \ll_\delta \frac{(v-u) \alpha}{\log y} \prod_{y < p \le z} \left( 1 - \frac{\alpha}{p} \right)^{-1}. \] 
	\end{lemma}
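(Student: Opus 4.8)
The plan is to treat this as a short-interval, Shiu-type estimate and to prove it by stratifying according to the number of prime factors and then peeling off a single prime. Write $\Sigma$ for the sum on the left-hand side. Since $1 < u$, the integer $n = 1$ never occurs, so
\[
\Sigma \;=\; \sum_{k \ge 1} \alpha^k N_k, \qquad N_k \;:=\; \#\bigl\{\, n \in [u,v] \,:\, \Omega(n) = k,\ p \mid n \Rightarrow y < p \le z \,\bigr\}.
\]
For each $k \ge 1$ I would write any $n$ counted by $N_k$ as $n = mp$ with $p$ a prime divisor of $n$; then $m = n/p$ has $\Omega(m) = k - 1$ and all its prime factors in $(y,z]$, so counting each such pair $(m,p)$ once (an overcount, which is harmless for an upper bound) gives, after also dropping the restriction $y < p \le z$ from the inner count,
\[
N_k \;\le\; \sum_{\substack{\Omega(m) = k-1 \\ p \mid m \Rightarrow y < p \le z}} \#\bigl\{\, p \text{ prime} \,:\, y < p \le z,\ u \le mp \le v \,\bigr\} \;\le\; \sum_{\substack{\Omega(m) = k-1 \\ p \mid m \Rightarrow y < p \le z}} \#\bigl\{\, p \text{ prime} \,:\, \tfrac{u}{m} \le p \le \tfrac{v}{m} \,\bigr\}.
\]

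Next I would bound the inner counts. Only $m < v/y$ can make the first (constrained) inner count nonzero, and for such $m$ the hypothesis $u \le v(1 - y^{-\delta})$ gives $(v-u)/m \ge vy^{-\delta}/m > y^{1-\delta}$. So $[u/m, v/m]$ is an interval of length exceeding $y^{1-\delta}$, and by the Brun--Titchmarsh theorem (together with $\log((v-u)/m) > (1-\delta)\log y$) it contains $\ll_\delta (v-u)/(m\log y)$ primes, the bounded range of small $y$ being absorbed into the implied constant. Restricting the sum over $m$ to $m < v/y$, plugging this bound in, and then summing over $k$ and relaxing back to all $m$ yields
\[
\Sigma \;\ll_\delta\; \frac{v-u}{\log y} \sum_{k \ge 1} \alpha^k \sum_{\substack{\Omega(m) = k-1 \\ p \mid m \Rightarrow y < p \le z}} \frac1m \;=\; \frac{\alpha(v-u)}{\log y} \sum_{\substack{m \ge 1 \\ p \mid m \Rightarrow y < p \le z}} \frac{\alpha^{\Omega(m)}}{m} \;=\; \frac{\alpha(v-u)}{\log y} \prod_{y < p \le z} \Bigl( 1 - \frac{\alpha}{p} \Bigr)^{-1},
\]
which is the claim; here the rearrangement is legitimate and the Euler product converges since $\alpha/p \le \alpha/y \le \tfrac12$ for all $p > y$, by the hypothesis $2\alpha \le y$.

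The main obstacle is concentrated entirely in the short-interval prime count of the second step: the hypothesis $u \le v(1 - y^{-\delta})$ is exactly what forces every interval $[u/m, v/m]$ to have length at least $y^{1-\delta}$, which is what lets Brun--Titchmarsh beat $\log y$ by a constant depending only on $\delta$---the source of the $\delta$-dependence in the conclusion. I would not expect to need the hypothesis $z \le y^{10}$. The remaining points are routine: identifying the peeling decomposition and checking the overcount is harmless, confirming that no $m \ge v/y$ contributes, reducing the inner count to a clean interval of length $(v-u)/m$, and absorbing the bounded range of small $y$ into the implied constant.
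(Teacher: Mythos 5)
Your argument is correct. Note, though, that the paper itself contains no proof of this lemma: it is quoted verbatim as Number Theory Result 1 of Harper \cite{Harper1} (with a pointer to Lemma 2.1 of Lau--Tenenbaum--Wu \cite{LTW1}), so what you have produced is a self-contained proof of the cited input rather than an alternative to an argument in this paper. Your route --- stratify by $\Omega(n)=k$, peel off one prime to write $n=mp$ with $m$ having all prime factors in $(y,z]$, observe that $m<v/y$ forces the interval $[u/m,v/m]$ to have length exceeding $y^{1-\delta}$ by the hypothesis $u\le v(1-y^{-\delta})$, apply Brun--Titchmarsh, and resum into the Euler product $\prod_{y<p\le z}(1-\alpha/p)^{-1}$ (convergent since $p>y\ge 2\alpha$) --- is essentially the standard sieve-flavoured argument behind the cited result, so the two approaches coincide in spirit. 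Two small points worth making explicit in a write-up: the trivial $O(1)$ from endpoint/short-interval effects (when $y$ is bounded or $y^{1-\delta}$ is barely larger than $1$) is absorbed because $y^{1-\delta}/\log y$ is bounded below by a constant depending only on $\delta$ for $y\ge 3$, which is the precise content of your ``bounded range of small $y$'' remark; and your observation that the hypothesis $z\le y^{10}$ is never used is accurate --- your proof gives the bound for all $z\ge y$, that hypothesis being present only to match the form of the statement in \cite{Harper1}, where it is harmless for the application.
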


	\subsection{Upper bounds}
	
	The main part of this section will be devoted to deducing an analogue of \cite[Proposition 1]{Harper1}, giving an upper bound for the pseudomomoments in terms of moments of integrals of random Euler products. In order to state this properly, we introduce the notation
	\[ F_k(s) := \prod_{p \le x^{e^{-k}}} \left(1 - \frac{f(p)}{p^s} \right)^{-1}. \]
	Moreover, for a given integer $n$ we denote by $P(n)$ the largest prime divisor of $n$. We will frequently be using that for $\alpha \ge 1$ the divisor functions satisfy the inequality $d_\alpha^2 \le d_{\alpha^2}$. To see this, note first that from multiplicativity it suffices to verify this at prime powers. From there, it is an elementary induction exercise using standard properties of binomial coefficients, noting that $d_\alpha(p^j) = \binom{j + \alpha - 1}{j}$. We leave the rest to the reader.
	
	\begin{proposition}\label{REP}
	Let $\alpha \ge 1$ and $0 < q \le \frac{1}{2}$ be fixed and let $K = [\log \log \log x]$.
	Then we have
	\begin{align*}
	\Psi_{2q,\alpha}(x) \ll \frac{1}{(\log x)^q} &\sum_{0 \le k \le K} \mathbb{E} \Bigg[ \bigg( \int_1^{x^{1-e^{-(k+1)}}} \bigg| \sum_{\substack{n > z \\ P(n)  \le x^{e^{-(k+1)}} }} \frac{d_\alpha(n) f(n)}{\sqrt{n}}  \bigg|^2 \, \frac{dz}{z^{1-2 k / \log x}} \bigg)^q \bigg] \\ + &\sum_{0 \le k \le K} e^{- e^k q} \mathbb{E} \left[ |F_k(1/2)|^{2 \alpha q} \right] + 1.
	\end{align*}
	\end{proposition}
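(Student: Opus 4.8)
The plan is to follow the template of \cite[Proposition 1]{Harper1}, adapted to the divisor‑weighted setting. By the Bohr correspondence already recorded above, $\Psi_{2q,\alpha}(x)=\mathbb{E}\big|\sum_{n\le x}d_\alpha(n)f(n)n^{-1/2}\big|^{2q}$. I would first partition the integers $n\le x$ according to the size of their largest prime factor: either $P(n)\le x^{e^{-K}}$, or there is a unique $k\in\{0,\dots,K-1\}$ with $x^{e^{-(k+1)}}<P(n)\le x^{e^{-k}}$. Since $0<q\le\tfrac12$, the elementary inequality $\big(\sum_i a_i\big)^{2q}\le\sum_i a_i^{2q}$ for $a_i\ge0$ gives
\[
\Psi_{2q,\alpha}(x)\le\sum_{k=0}^{K-1}\mathbb{E}\bigg|\sum_{\substack{n\le x\\ x^{e^{-(k+1)}}<P(n)\le x^{e^{-k}}}}\frac{d_\alpha(n)f(n)}{\sqrt n}\bigg|^{2q}+\mathbb{E}\bigg|\sum_{\substack{n\le x\\ P(n)\le x^{e^{-K}}}}\frac{d_\alpha(n)f(n)}{\sqrt n}\bigg|^{2q},
\]
and the strategy is to bound each summand by the corresponding piece of the claimed expression; the stray $n=1$ term together with the error terms produced below accounts for the final $+1$ (here the choice $K=[\log\log\log x]$ is exactly what makes an error of size $e^{-e^{K}q}(\log x)^q$ bounded).

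For a fixed $k$, I would factor each $n$ in the $k$-th sum as $n=mr$, where every prime factor of $m$ is $\le x^{e^{-(k+1)}}$ and every prime factor of $r$ lies in $(x^{e^{-(k+1)}},x^{e^{-k}}]$ (so $r>1$). Then $d_\alpha(n)=d_\alpha(m)d_\alpha(r)$ and $f(n)=f(m)f(r)$, and crucially $m$ and $r$ are functions of disjoint blocks of the independent variables $(f(p))_p$. Writing the $k$-th sum as $\sum_r\frac{d_\alpha(r)f(r)}{\sqrt r}\,D_{k+1}(x/r)$ with $D_{k+1}(y):=\sum_{m\le y,\,P(m)\le x^{e^{-(k+1)}}}\frac{d_\alpha(m)f(m)}{\sqrt m}$, I would condition on the variables $f(p)$ with $p\le x^{e^{-(k+1)}}$, apply Jensen's inequality (valid since $0<q\le1$) to replace $\mathbb{E}_r|\cdot|^{2q}$ by $(\mathbb{E}_r|\cdot|^2)^q$, and then use orthogonality of the Steinhaus variables over the primes in $(x^{e^{-(k+1)}},x^{e^{-k}}]$ to collapse the second moment to its diagonal. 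This yields
\[
\mathbb{E}\bigg|\sum_{\substack{n\le x\\ x^{e^{-(k+1)}}<P(n)\le x^{e^{-k}}}}\frac{d_\alpha(n)f(n)}{\sqrt n}\bigg|^{2q}\le\mathbb{E}\bigg[\Big(\sum_{\substack{r>1,\ r\le x\\ p\mid r\Rightarrow x^{e^{-(k+1)}}<p\le x^{e^{-k}}}}\frac{d_\alpha(r)^2}{r}\,|D_{k+1}(x/r)|^2\Big)^q\bigg],
\]
where, to control the sums over $r$ that occur here and below, I use $d_\alpha^2\le d_{\alpha^2}$ (recorded above) together with Lemma \ref{NTR1}, applied with $\alpha^2$ in place of its parameter.

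Next I would complete the smooth partial sum: since $F_{k+1}(1/2)^\alpha=\sum_{P(m)\le x^{e^{-(k+1)}}}\frac{d_\alpha(m)f(m)}{\sqrt m}$ converges almost surely (a finite product of convergent local factors), we have $D_{k+1}(x/r)=F_{k+1}(1/2)^\alpha-\sum_{m>x/r,\,P(m)\le x^{e^{-(k+1)}}}\frac{d_\alpha(m)f(m)}{\sqrt m}$, so $|D_{k+1}(x/r)|^2\ll|F_{k+1}(1/2)|^{2\alpha}+\big|\sum_{m>x/r}\cdot\big|^2$. Taking expectations, the $|F_{k+1}(1/2)|^{2\alpha}$ part — combined via independence with the $r$-sum and estimated through Lemma \ref{NTR1} and a Rankin-type bound exploiting the constraint $mr\le x$ — contributes a term comparable to a member of the family $e^{-e^{k}q}\,\mathbb{E}|F_k(1/2)|^{2\alpha q}$; the fully smooth term $\mathbb{E}|\sum_{n\le x,\,P(n)\le x^{e^{-K}}}\cdot|^{2q}$ is treated the same way and supplies the $k=K$ contribution. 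The remaining tail part leads to $\mathbb{E}\big[\big(\sum_r\frac{d_\alpha(r)^2}{r}|\sum_{m>x/r}\cdot|^2\big)^q\big]$, and — noting that as $r$ runs over the admissible values, $x/r$ runs over $[1,x^{1-e^{-(k+1)}}]$ — I would bound the inner $r$-sum by $\frac{1}{\log x}\int_1^{x^{1-e^{-(k+1)}}}\big|\sum_{n>z,\,P(n)\le x^{e^{-(k+1)}}}\frac{d_\alpha(n)f(n)}{\sqrt n}\big|^2\,z^{2k/\log x-1}\,dz$: decompose the $r$-range dyadically, estimate $\sum_{R<r\le 2R}d_\alpha(r)^2/r$ by Lemma \ref{NTR1} (whose $\big(\log x^{e^{-(k+1)}}\big)^{-1}$ factor is the source of the overall $1/\log x$, and whose combination with a Rankin shift of size $\sim k/\log x$ produces the weight $z^{2k/\log x}$), and relate $\sum_{m>x/r}\cdot$ over each block to the integral of $\big|\sum_{n>z}\cdot\big|^2$ over the matching range of $z$.

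The step I expect to be the main obstacle is this last sum‑to‑integral comparison. Lemma \ref{NTR1} only furnishes serviceable counts of divisor‑weighted integers over ranges $[R,R(1+y^{-\delta})]$ that are far too long for the tail Dirichlet sum $\sum_{m>z}\cdot$ to be treated as essentially constant, so one must separately control the fluctuation of this tail sum over such ranges — naturally via a further dyadic decomposition together with Plancherel's identity (Lemma \ref{Plancherel}) applied to the smooth Dirichlet polynomial — while simultaneously pinning down the exact exponent in the weight $z^{2k/\log x-1}$, the precise power of $\log x$, and the correct shape of the $e^{-e^{k}q}\mathbb{E}|F_k(1/2)|^{2\alpha q}$ terms, uniformly over all $K\approx\log\log\log x$ scales. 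Making this bookkeeping line up is the technical heart of the section, and is presumably why, as remarked above, establishing this reduction constitutes a rather significant part of the paper.
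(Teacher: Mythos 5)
Your overall architecture is close to the paper's: decompose according to the size of the largest prime factor, condition on the small-prime variables and apply H\"older plus orthogonality to reach $\mathbb{E}\big[\big(\sum_r \frac{d_\alpha(r)^2}{r}|D_{k+1}(x/r)|^2\big)^q\big]$, then pass to tail sums and compare with an integral. The structural difference is the order of completion: the paper completes $\sum_{n\le x}$ to the full Euler product $\sum_{P(n)\le x}$ \emph{before} decomposing, so the decomposition is applied only to the tail $n>x$ and every inner sum is a tail over $n>x/m$ from the outset, with the rough part $m$ allowed to be arbitrarily large; it is exactly the range $m>x$ (via $d_\alpha(m)^2\le\alpha^{2\Omega(m)}=e^{-\Omega(m)}(e\alpha^2)^{\Omega(m)}$ and $\Omega(m)\ge e^k\log m/\log x$) that produces the factors $e^{-e^kq}$. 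Because you complete \emph{after} decomposing, your $r$ satisfies $r\le x$, and the constraint $mr\le x$ disappears the moment you replace $D_{k+1}(x/r)$ by $F_{k+1}(1/2)^\alpha$; the coefficient of $|F_{k+1}(1/2)|^{2\alpha}$ is then just $\sum_{1<r\le x}d_\alpha(r)^2/r\asymp 1$, so this piece contributes $\asymp\mathbb{E}[|F_{k+1}(1/2)|^{2\alpha q}]$ with \emph{no} $e^{-e^kq}$ decay, and your claim that it is ``comparable to a member of the family $e^{-e^kq}\mathbb{E}|F_k(1/2)|^{2\alpha q}$'' cannot be obtained by a Rankin-type use of $mr\le x$, since that constraint is gone. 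The defect is reparable: by Lemma \ref{GMomentBounds} these moments are $\ll(e^{-k}\log x)^{(q\alpha)^2}$, so their sum over $k\le K$ is $\ll(\log x)^{(q\alpha)^2}$ and can be absorbed into the $k=0$ term $e^{-q}\mathbb{E}[|F_0(1/2)|^{2\alpha q}]$ of the stated bound---but that absorption (needing the Euler product moment estimates, not mere monotonicity, to avoid a spurious factor $K$) has to be argued, and it is not the mechanism you describe.

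The more serious problem is the step you yourself flag as the ``main obstacle'', which is precisely the content the proposition requires and which your sketch does not supply. To replace $\sum_r\frac{d_\alpha(r)^2}{r}\big|\sum_{n>x/r}\cdots\big|^2$ by $\frac{1}{\log x}\int\big|\sum_{n>z}\cdots\big|^2 z^{2k/\log x-1}\,dz$ one must smooth the point evaluations at $z=x/r$ and control the resulting error \emph{in expectation}, uniformly in $k$. Dyadic decomposition plus Plancherel (Lemma \ref{Plancherel}) is not the right tool here: Plancherel converts a mean square in $z$ into an integral of the Euler product (that is the content of the \emph{next} proposition), but it gives no handle on the discrepancy between the tail sum at $x/r$ and its average over a short interval. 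The paper instead averages over $t\in[m(1-1/X),m]$ with $X=\exp(\sqrt{\log x})$, bounds the error $\mathbb{E}\big|\sum_{x/m<n\le x/t}\cdots\big|^2=\sum_{x/m<n\le x/t}d_\alpha(n)^2/n$ using Shiu's theorem on divisor sums in short intervals together with a hyperbola-type interchange, and bounds the short-interval count of rough $m$ by Lemma \ref{NTR1} combined with the $e^{-\Omega(m)}$ device above; the weight $z^{2k/\log x-1}$ then arises from the elementary inequality $\log(x/z)\gg z^{-2k/\log x}\log x$ on $1\le z\le x^{1-e^{-(k+1)}}$, not from a Rankin shift. None of these ingredients (in particular the Shiu input, which is essential for the smoothing error) appear in your outline, so the proposal correctly identifies the target reduction but leaves the technical heart of the proof open.
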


	One rather crucial difference compared to \cite[Proposition 1]{Harper1} here is that we are summing over $n > z$ instead of $n \le z$ inside the integral. This is very helpful in order to achieve uniformity over $k$. One exemplary reason for that is the fact that
	\[ \sum_{\substack{ n \le x \\ P(n) \le x^{e^{-(k+1)}} }} \frac{d_\alpha(n)^2}{n} \quad \text{and} \quad \sum_{ n \le x } \frac{d_\alpha(n)^2}{n} \] are fairly comparable in size when $k$ is close to $K$, whereas
	\[ \sum_{\substack{ n > x \\ P(n) \le x^{e^{-(k+1)}} }} \frac{d_\alpha(n)^2}{n} \]
	(an expression that will appear in the proof) is quite significantly smaller. Note that the corresponding expressions
	\[ \sum_{\substack{ n \le x \\ P(n) \le x^{e^{-(k+1)}} }} 1 \quad \text{and} \quad \sum_{ n \le x } 1 \]
	in \cite[Proposition 1]{Harper1} on the other hand are quite far apart in size. Similar features appear in other error bounds when $k$ is close to $K$. Note also that in any case the only reason we are allowed to switch to $n > x$ is that the complete sums over $n$ with $P(n) \le x$, which are just the full Euler products over $p \le x$, are not too large here (as will be illustrated in the proof), which is not the case in the work there.

	\begin{proof}[Proof of Proposition \ref{REP}]
		The Bohr correspondence
		tells us that
		\[ \Psi_{2 q, \alpha}(x) = \Big\Vert \sum_{n \le x} \frac{d_\alpha(n) f(n)}{\sqrt{n}} \Big\Vert_{2 q}^{2q}, \]
		where $\Vert \cdot \Vert_{2 q} = \mathbb{E}[ |\cdot|^{2 q}]^{1/{2 q}}$. Note that this does not define a norm when $q < 1/2$, but only a pseudonorm (but we might still sometimes refer to it as a norm).
		
		Now
		\begin{align*}\allowdisplaybreaks
		\Big\Vert \sum_{n \le x} \frac{d_\alpha(n) f(n)}{\sqrt{n}} \Big\Vert_{2 q}^{2q}
		&\le \Big\Vert \sum_{P(n) \le x} \frac{d_\alpha(n) f(n)}{\sqrt{n}} \Big\Vert_{2 q}^{2q} + \Big\Vert \sum_{\substack{n > x \\ P(n) \le x}} \frac{d_\alpha(n) f(n)}{\sqrt{n}} \Big\Vert_{2 q}^{2q} \\
		&= \Big \Vert \prod_{p \le x} \left( 1 + \frac{d_\alpha(p) f(p)}{p^{1/2}} + \frac{d_\alpha(p^2) f(p)^2}{p} + \dots \right)\Big \Vert_{2 q}^{2 q} + \Big\Vert \sum_{\substack{n > x \\ P(n) \le x}} \frac{d_\alpha(n) f(n)}{\sqrt{n}} \Big\Vert_{2 q}^{2q}  \\
		&= \Big \Vert \prod_{p \le x} \left( 1 - \frac{f(p)}{p^{1/2}}\right)^{-\alpha} \Big \Vert_{2 q}^{2 q} + \Big\Vert \sum_{\substack{n > x \\ P(n) \le x}} \frac{d_\alpha(n) f(n)}{\sqrt{n}} \Big\Vert_{2 q}^{2q} \\
		&= \mathbb{E} \left[ |F(1/2)|^{2 \alpha q} \right] + \Big\Vert \sum_{\substack{n > x \\ P(n) \le x}} \frac{d_\alpha(n) f(n)}{\sqrt{n}} \Big\Vert_{2 q}^{2q}.
		\end{align*}
		
		We can then subdivide the sum according to the size of the largest prime factor, to obtain that
		\begin{align*} \Big\Vert \sum_{n \le x} \frac{d_\alpha(n) f(n)}{\sqrt{n}} \Big\Vert_{2 q}^{2q} \le \mathbb{E} \left[ |F(1/2)|^{2 \alpha q} \right] + \sum_{0 \le k \le K} &\Big\Vert \sum_{\substack{n > x \\ x^{e^{-(k+1)} < P(n) \le x^{e^{-k}} } }} \frac{d_\alpha(n) f(n)}{\sqrt{n}} \Big\Vert_{2 q}^{2 q} \\ + &\Big \Vert \sum_{\substack{n > x \\ P(n) \le x^{e^{-(K+1)}}  }} \frac{d_\alpha(n) f(n)}{\sqrt{n}} \Big \Vert_{2 q}^{2 q}. 
		\end{align*}
		In order to bound the last term, we can trivially bound the $2q$-norm by the $2$-norm and use orthogonality to deduce that 
		\[ \Big \Vert \sum_{\substack{n > x \\ P(n) \le x^{e^{-(K+1)}}}} \frac{d_\alpha(n) f(n)}{\sqrt{n}} \Big \Vert_{2 q}^{2 q} \le \bigg(\sum_{\substack{n > x \\ P(n) \le x^{e^{-(K+1)}}}} \frac{d_\alpha(n)^2}{n}\bigg)^q.  \]
		But this can be dealt with by means of Rankin's trick: For any constant $C>0$, we have
		\begin{align*}
		\sum_{\substack{n > x \\ P(n) \le y  }} \frac{d_\alpha(n)^2}{n} &\le x^{-C/\log y} \sum_{\substack{n > x \\ P(n) \le y }} \frac{d_\alpha(n)^2}{n^{1-C/\log y}} \le x^{-C/\log y} \sum_{P(n) \le y} \frac{d_\alpha(n)^2}{n^{1-C/\log y}} \\ &\ll x^{-C/\log y} \prod_{p \le y} \left(1 - \frac{1}{p^{1-C/\log y}} \right)^{- \alpha ^ 2} \ll x^{-C/\log y} (\log y)^{\alpha^2}
		\end{align*}
		(using that $d_\alpha(n)^2 \le d_{\alpha^2}(n)$ in the third step). Taking $y = x^{1/\log \log x}$ and $C=\alpha^2$ and using that $K = [\log \log \log x]$ thus gives
		\[ \sum_{\substack{n > x \\ P(n) \le x^{e^{-(K+1)}}  }} \frac{d_\alpha(n)^2}{n} \ll (\log x)^{-\alpha^2} \left( \frac{\log x}{\log \log x} \right)^{\alpha^2} \ll 1. \]
		Putting the bounds up to this point together tells us that
		\begin{equation} \Psi_{2q,\alpha}(x) \ll \sum_{0 \le k \le K} \Big\Vert \sum_{\substack{n > x \\ x^{e^{-(k+1)} < P(n) < x^{e^{-k}} } }} \frac{d_\alpha(n) f(n)}{\sqrt{n}} \Big\Vert_{2 q}^{2 q} +  \mathbb{E} \left[ |F(1/2)|^{2 \alpha q} \right] + 1. \end{equation}
		
		Next, let $\mathbb{E}^{(k)}$ denote the conditional expectation given $(f(p))_{p \le x^{e^{-(k+1)}}}$. Using H\"{o}lder's inequality for conditional expectations as well as the independence of $f(p)$ at different primes and orthogonality (compare \cite[Proposition 1]{Harper1}), we have
		\begin{align*}\allowdisplaybreaks
		&\quad \; \bigg\Vert \sum_{\substack{n > x \\ x^{e^{-(k+1)} < P(n) \le x^{e^{-k}} } }} \frac{d_\alpha(n) f(n)}{\sqrt{n}} \bigg\Vert_{2 q}^{2 q} \\
		&= \bigg \Vert \sum_{\substack{ m > 1 \\ p \, | \, m \Rightarrow x^{e^{-(k+1)}} < p \le x^{e^{-k}}}} \frac{d_\alpha(m) f(m)}{\sqrt{m}} \sum_{\substack{ n > x/m \\ P(n)\le x^{e^{-(k+1)}} }} \frac{d_\alpha(n) f(n)}{\sqrt{n}} \bigg \Vert_{2 q}^{2 q} \\
		&= \mathbb{E} \Bigg[ \mathbb{E}^{(k)} \Bigg[ \; \bigg| \sum_{\substack{m > 1 \\ p \, | \, m \Rightarrow x^{e^{-(k+1)}} < p \le x^{e^{-k}} }} \frac{d_\alpha(m) f(m)}{\sqrt{m}} \sum_{\substack{ n > x/m \\ P(n)\le x^{e^{-(k+1)}} }} \frac{d_\alpha(n) f(n)}{\sqrt{n}} \bigg|^{2 q} \; \Bigg] \Bigg] \\
		&\le \mathbb{E} \Bigg[ \Bigg( \mathbb{E}^{(k)} \Bigg[ \bigg|\sum_{\substack{ m > 1 \\ p \, | \, m \Rightarrow x^{e^{-(k+1)}} < p \le x^{e^{-k}} }} \frac{d_\alpha(m) f(m)}{\sqrt{m}} \sum_{\substack{ n > x/m \\ P(n)\le x^{e^{-(k+1)}} }} \frac{d_\alpha(n) f(n)}{\sqrt{n}}  \bigg|^2 \Bigg] \Bigg)^q \Bigg] \\
		&= \bigg \Vert \sum_{\substack{m > 1 \\ p \, | \, m \Rightarrow x^{e^{-(k+1)}} < p \le x^{e^{-k}} }} \frac{d_\alpha(m)^2}{m} \; \bigg| \sum_{\substack{ n > x/m \\ P(n)\le x^{e^{-(k+1)}} }} \frac{d_\alpha(n) f(n)}{\sqrt{n}} \bigg|^2 \, \bigg \Vert_q^q.
		\end{align*}
		The next step is to smoothen the inner sum. Again we proceed in a very similar fashion to \cite[Proposition 1]{Harper1}, setting (say) $X=\exp(\sqrt{\log x})$ and noting that
		\begin{align}
		&\bigg \Vert \sum_{\substack{m > 1 \\ p \, | \, m \Rightarrow x^{e^{-(k+1)}} < p \le x^{e^{-k}} }} \frac{d_\alpha(m)^2}{m} \; \bigg| \sum_{\substack{ n > x/m \\ P(n)\le x^{e^{-(k+1)}} }} \frac{d_\alpha(n) f(n)}{\sqrt{n}} \bigg|^2 \, \bigg \Vert_q^q \nonumber \\
		\ll \, &\bigg \Vert \sum_{\substack{m > 1 \\ p \, | \, m \Rightarrow x^{e^{-(k+1)}} < p \le x^{e^{-k}} }} \frac{X d_\alpha(m)^2}{m^2} \int_{m(1-1/X)}^m \bigg| \sum_{\substack{ n > x/t \\ P(n)\le x^{e^{-(k+1)}} }} \frac{d_\alpha(n) f(n)}{\sqrt{n}} \bigg|^2 \, dt \, \bigg \Vert_q^q \nonumber \\
		+ \, &\bigg \Vert \sum_{\substack{m > 1 \\ p \, | \, m \Rightarrow x^{e^{-(k+1)}} < p \le x^{e^{-k}} }} \frac{X d_\alpha(m)^2}{m^2} \int_{m(1-1/X)}^m \bigg| \sum_{\substack{ x/m < n \le x/t \\ P(n)\le x^{e^{-(k+1)}} }} \frac{d_\alpha(n) f(n)}{\sqrt{n}} \bigg|^2 \, dt \, \bigg \Vert_q^q. \label{SmoothingError}
		\end{align}
		The range of summation for the inner sum in the second term is rather small, so we might expect this to only give a minor contribution. Indeed, trivially bounding the $q$-norm by the $1$-norm, pulling the expectation inside and then using orthogonality, the second term in (\ref{SmoothingError}) is
		\begin{align*}\allowdisplaybreaks
		&\le \Bigg( \sum_{\substack{m > 1 \\ p \, | \, m \Rightarrow x^{e^{-(k+1)}} < p \le x^{e^{-k}} }} \frac{X d_\alpha(m)^2}{m^2} \int_{m(1-1/X)}^m \mathbb{E} \bigg[ \; \bigg| \sum_{\substack{ x/m < n \le x/t \\ P(n)\le x^{e^{-(k+1)}} }} \frac{d_\alpha(n) f(n)}{\sqrt{n}} \bigg|^2 \, \bigg] \, dt  \Bigg)^q \\
		&\le \Bigg( \sum_{\substack{ m > 1\\ p \, | \, m \Rightarrow x^{e^{-(k+1)}} < p \le x^{e^{-k}} }} \frac{d_\alpha(m)^2}{m} \sum_{\substack{ \frac{x}{m} < n \le \frac{x}{m(1-1/X)} \\ P(n)\le x^{e^{-(k+1)}} }} \frac{d_\alpha(n)^2}{n} \Bigg)^q \\
		&\ll \Bigg( \frac{1}{x} \sum_{\substack{m > 1 \\ p \, | \, m \Rightarrow x^{e^{-(k+1)}} < p \le x^{e^{-k}} }} d_\alpha(m)^2 \sum_{\substack{ \frac{x}{m} < n \le \frac{x}{m(1-1/X)} \\ P(n)\le x^{e^{-(k+1)}} }} d_\alpha(n)^2 \Bigg)^q.
		\end{align*}
		In order to bound this, we remark that by a result of Shiu \cite{Shiu1}, we have
		\[ \sum_{x-y < n \le x} d_a(n) \ll y (\log x)^{a - 1} \]
		and
		\[ \sum_{\substack{x-y < n \le x \\ p \, | \, n \Rightarrow x^{e^{-(k+1)}} < p \le x^{e^{-k}} }} d_a(n) \ll y \]
		for fixed $a$ and uniformly over $x^{1/3} \le y \le x$, say.
		We now use a hyperbola-type argument, subdividing the first sum into the range $1 < m \le \sqrt{x}$ and $m >  \sqrt{x}$. We then interchange the sum on the latter range, and thus have
		\begin{align*}
		&\frac{1}{x} \sum_{\substack{m > 1 \\ p \, | \, m \Rightarrow x^{e^{-(k+1)}} < p \le x^{e^{-k}} }} d_\alpha(m)^2 \sum_{\substack{ \frac{x}{m} < n \le \frac{x}{m(1-1/X)} \\ n \text{ is } x^{e^{-(k+1)}} \text{-smooth} }} d_\alpha(n)^2 \\
		\le &\frac{1}{x} \sum_{\substack{1 < m \le \sqrt{x} \\ p \, | \, m \Rightarrow x^{e^{-(k+1)}} < p \le x^{e^{-k}} }} d_\alpha(m)^2 \sum_{\frac{x}{m} < n \le \frac{x}{m(1-1/X)}} d_{\alpha^2}(n) 
		\\ + &\frac{1}{x} \sum_{\substack{n \le \frac{\sqrt{x}}{1-1/X} \\ P(n) \le x^{e^{-(k+1)}} }} d_\alpha(n)^2 \sum_{\substack{ \frac{x}{n} < m \le \frac{x}{n(1-1/X)} \\ p \, | \, m \Rightarrow x^{e^{-(k+1)}} < m \le x^{e^{-k}} }} d_{\alpha^2}(m) \\ 
		\ll &\frac{(\log x)^{\alpha^2 - 1}}{X} \sum_{\substack{m > 1 \\ p \, | \, m \Rightarrow x^{e^{-(k+1)}} < m \le x^{e^{-k}} }} \frac{d_\alpha(m)^2}{m} + \frac{1}{X} \sum_{\substack{n \le \frac{\sqrt{x}}{1-1/X} \\ P(n) \le x^{e^{-(k+1)}} }} \frac{d_\alpha(n)^2}{n} \\ 
		\ll &\frac{(\log x)^{\alpha^2 - 1}}{X} \prod_{x^{e^{-(k+1)}} < p \le x^{e^{-k}}} \left( 1 - \frac{1}{p}  \right)^{-\alpha^2} + \frac{1}{X} \prod_{p \le x^{e^{-(k+1)}}} \left( 1 - \frac{1}{p} \right)^{- \alpha^2} \\
		\ll &\frac{(\log x)^{\alpha^2}}{X},
		\end{align*} 
		
		which is easily $\ll 1$ when summed over $0 \le k \le K$ (after taking the $q$-th power).
		
		Regarding the first term in (\ref{SmoothingError}), we can interchange sum and integral to arrive at
		\begin{equation}\label{tIntegral} \bigg \Vert \int_{x^{e^{-(k+1)}}}^\infty \bigg| \sum_{\substack{n > x/t \\ P(n) \le x^{e^{-(k+1)}} }} \frac{d_\alpha(n) f(n)}{\sqrt{n}}  \bigg|^2 \sum_{\substack{t < m \le t/(1-1/X) \\ p \, | \, m \Rightarrow x^{e^{-(k+1)}} < p \le x^{e^{-k}} }} \frac{X d_\alpha(m)^2 }{m^2} \, dt \bigg \Vert_q^q. \end{equation}
		
		Concluding our estimates so far, we have now proven that
		\begin{align*}
		\Psi_{2 q, \alpha}(x) &\ll \sum_{0 \le k \le K} \bigg \Vert \int_{x^{e^{-(k+1)}}}^\infty \bigg| \sum_{\substack{n > x/t \\ P(n) \le x^{e^{-(k+1)}} }} \frac{d_\alpha(n) f(n)}{\sqrt{n}}  \bigg|^2 \sum_{\substack{t < m \le t/(1-1/X) \\ p \, | \, m \Rightarrow x^{e^{-(k+1)}} < p \le x^{e^{-k}} }} \frac{X d_\alpha(m)^2 }{m^2} \, dt \bigg \Vert_q^q \\ &+ \mathbb{E} \left[ |F(1/2)|^{2 \alpha q} \right] + 1.
		\end{align*}
		
		For the inner sum in (\ref{tIntegral}), note that $d_\alpha(n)^2 \le \alpha^{2 \Omega(n)}$, and that $m > t$ and $p \, | \, m \Rightarrow x^{e^{-(k+1)}} < p \le x^{e^{-k}}$ imply that $m$ has $\ge \frac{e^k \log t}{\log x}$ prime divisors. Thus, Lemma \ref{NTR1} implies
		\begin{align*}
		\sum_{\substack{t < m \le t/(1-1/X) \\ p \, | \, m \Rightarrow x^{e^{-(k+1)}} < p \le x^{e^{-k}} }} \frac{X d_\alpha(m)^2 }{m^2} &\le \frac{X}{t^2} \sum_{\substack{t < m \le t/(1-1/X) \\ p \, | \, m \Rightarrow x^{e^{-(k+1)}} < p \le x^{e^{-k}} }} \alpha^{2 \Omega(m)} \\
		&\le \frac{X}{t^2} e^{-\frac{e^k \log t}{\log x}} \sum_{\substack{t < m \le t/(1-1/X) \\ p \, | \, m \Rightarrow x^{e^{-(k+1)}} < p \le x^{e^{-k}} }} (e \alpha^2)^{\Omega(m)} \\
		&\ll \frac{e^k e^{- \frac{e^k \log t}{\log x}}}{t \log x} \ll \frac{1}{t \log t}.
		\end{align*}
		Subdividing the range of integration in (\ref{tIntegral}) into $t \le x$ and $t > x$, we thus upper-bound it by
		
		\begin{align*}
		&\quad \; \bigg \Vert \int_{x^{e^{-(k+1)}}}^x \bigg| \sum_{\substack{n > x/t \\ n \text{ is } x^{e^{-(k+1)}} \text{-smooth}}} \frac{d_\alpha(n) f(n)}{\sqrt{n}}  \bigg|^2 \, \frac{dt}{t \log t} \bigg \Vert_q^q \\
		&+ \frac{e^{k q}}{(\log x)^q} \bigg \Vert \sum_{\substack{n \ge 1 \\ P(n) \le x^{e^{-(k+1)}} }} \frac{d_\alpha(n) f(n)}{\sqrt{n}}  \bigg \Vert_{2 q}^{2 q} \left( \int_x^\infty \frac{dt}{t^{1+\frac{e^k }{\log x}}} \right)^q \\
		&\ll \bigg \Vert \int_{x^{e^{-(k+1)}}}^x \bigg| \sum_{\substack{n > x/t \\ P(n) \le x^{e^{-(k+1)}}}} \frac{d_\alpha(n) f(n)}{\sqrt{n}}  \bigg|^2 \, \frac{dt}{t \log t} \bigg \Vert_q^q
		+ e^{-e^k q} \mathbb{E}[|F_{k+1} (1/2)|^{2 \alpha q}].
		\end{align*}
		Substituting $z=x/t$, the first term equates to
		\begin{align*} &\bigg \Vert \int_1^{x^{1-e^{-(k+1)}}} \bigg| \sum_{\substack{n > z \\P(n) \le x^{e^{-(k+1)}}}} \frac{d_\alpha(n) f(n)}{\sqrt{n}}  \bigg|^2 \, \frac{dz}{z \log(x/z)} \bigg \Vert_q^q \\
		\ll \frac{1}{(\log x)^q} &\bigg \Vert \int_1^{x^{1-e^{-(k+1)}}} \bigg| \sum_{\substack{n > z \\ P(n) \le x^{e^{-(k+1)}}}} \frac{d_\alpha(n) f(n)}{\sqrt{n}}  \bigg|^2 \, \frac{dz}{z^{1-2 k / \log x}} \bigg \Vert_q^q,
		\end{align*}
		using that $\log(x/z) \gg z^{- 2 k / \log x} \log x$. Putting everything together gives the claim.
	\end{proof}

	\begin{proposition}\label{REP2}
		For any $\alpha \ge 1$, any $0 < q \le \frac{1}{2}$ and any $0 \le k \le K = [\log \log \log x]$, we have
		\begin{align*}
		&\mathbb{E} \Bigg[ \bigg( \int_1^{x^{1-e^{-(k+1)}}} \bigg| \sum_{\substack{n > z \\ P(n) \le x^{e^{-(k+1)}}}} \frac{d_\alpha(n) f(n)}{\sqrt{n}}  \bigg|^2 \, \frac{dz}{z^{1-2 k / \log x}} \bigg)^q \bigg]
		\ll \mathbb{E} \Bigg[ \bigg( \int_{\mathbb{R}} \frac{\left| F_{k+1} \left( \frac{1}{2} - \frac{2 (k+1)}{\log x} + it \right) \right|^{2 \alpha}}{\left| \frac{2 (k+1)}{\log x} + i t \right|^2} \, dt \bigg)^q \Bigg].
		\end{align*}
	\end{proposition}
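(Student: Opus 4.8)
The plan is to recognise the inner sum, which is a \emph{tail} $\sum_{n>z}$ of a random Dirichlet polynomial, as the Euler product $F_{k+1}$ continued slightly to the left of the critical line, and then invoke Parseval's identity; this is in the same spirit as Lemma~\ref{Plancherel}, but with partial sums replaced by tail sums. Write $y := x^{e^{-(k+1)}}$ and, for $z > 0$, set
\[ g(z) := \sum_{\substack{n > z \\ P(n) \le y}} \frac{d_\alpha(n) f(n)}{\sqrt n}, \]
so that the left-hand side of the proposition equals $\mathbb{E}\big[\big(\int_1^{x^{1-e^{-(k+1)}}} |g(z)|^2\, z^{2k/\log x - 1}\, dz\big)^q\big]$. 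First I would record two basic facts about $g$. The defining series converges absolutely, since $\sum_{P(n)\le y} d_\alpha(n)/\sqrt n = \prod_{p\le y}(1-p^{-1/2})^{-\alpha} < \infty$; in particular $g$ is bounded, and in fact constant on $(0,1)$. Moreover Rankin's trick ($\mathbbm{1}_{n>z} \le (n/z)^{1/4}$ for $z \ge 1$) gives the decay
\[ |g(z)| \le z^{-1/4} \sum_{P(n)\le y}\frac{d_\alpha(n)}{n^{1/4}} = z^{-1/4}\prod_{p\le y}\big(1-p^{-1/4}\big)^{-\alpha} \qquad (z \ge 1), \]
the product being finite for each fixed $x$ (its size plays no role below, as it will only enter an exact identity).

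Next I would pass to a Fourier transform. Fix $\sigma := \tfrac{2(k+1)}{\log x}$, so that $0 < \sigma < \tfrac14$ once $x$ is large, uniformly over $0 \le k \le K = [\log\log\log x]$. Put $G(u) := g(e^u)\, e^{\sigma u}$; the two facts above (so that $G$ decays like $e^{\sigma u}$ at $-\infty$ since $g$ is bounded and $\sigma>0$, and like $e^{(\sigma-1/4)u}$ at $+\infty$ since $\sigma<\tfrac14$) show that $G \in L^1(\mathbb{R}) \cap L^2(\mathbb{R})$. Interchanging sum and integral (legitimate by absolute convergence, as $\sigma \in (0,\tfrac12)$) and using the Euler product identity $\sum_{P(n)\le y} d_\alpha(n) f(n) n^{-w} = F_{k+1}(w)^\alpha$, valid for $\Re w > 0$, one computes
\[ \widehat{G}(t) = \int_\mathbb{R} g(e^u)\, e^{(\sigma - it)u}\, du = \sum_{P(n)\le y}\frac{d_\alpha(n)f(n)}{\sqrt n}\int_0^n z^{\sigma - it - 1}\, dz = \frac{1}{\sigma - it}\sum_{P(n)\le y}\frac{d_\alpha(n)f(n)}{n^{1/2 - \sigma + it}} = \frac{F_{k+1}\big(\tfrac12 - \sigma + it\big)^\alpha}{\sigma - it}, \]
which is valid and finite since $F_{k+1}$ is a finite Euler product whose only singularities lie on the line $\Re s = 0$, while $\tfrac12 - \sigma > 0$. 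By Plancherel's theorem for the Fourier transform, applied for each realisation of $f$,
\[ \int_0^\infty |g(z)|^2\, z^{2\sigma - 1}\, dz = \int_\mathbb{R} |G(u)|^2\, du = \frac{1}{2\pi}\int_\mathbb{R} |\widehat{G}(t)|^2\, dt = \frac{1}{2\pi}\int_\mathbb{R} \frac{\big| F_{k+1}\big(\tfrac12 - \tfrac{2(k+1)}{\log x} + it\big)\big|^{2\alpha}}{\big|\tfrac{2(k+1)}{\log x} + it\big|^2}\, dt, \]
using $|\sigma - it| = |\sigma + it|$.

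It remains to compare domains and weights, which is elementary. Since $2k/\log x \le 2\sigma$ we have $z^{2k/\log x - 1} \le z^{2\sigma - 1}$ for all $z \ge 1$, and the integrand is non-negative, so pointwise in $f$
\[ \int_1^{x^{1-e^{-(k+1)}}} |g(z)|^2\, \frac{dz}{z^{1-2k/\log x}} \;\le\; \int_0^\infty |g(z)|^2\, z^{2\sigma - 1}\, dz \;=\; \frac{1}{2\pi}\int_\mathbb{R} \frac{\big| F_{k+1}\big(\tfrac12 - \tfrac{2(k+1)}{\log x} + it\big)\big|^{2\alpha}}{\big|\tfrac{2(k+1)}{\log x} + it\big|^2}\, dt. \]
Raising to the power $q > 0$ (both sides being non-negative) and taking expectations yields the proposition, in fact with implied constant $(2\pi)^{-q} \le 1$. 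The one genuinely delicate point is the measure-theoretic bookkeeping: checking that $G \in L^1 \cap L^2$, so that the Fourier inversion/Plancherel step is legitimate on the vertical line $\Re s = \sigma$. This is exactly where one needs the Rankin-type decay $|g(z)| \ll z^{-1/4}$ together with the smallness $\sigma < \tfrac14$ (forced for large $x$ by $k \le K$); the rest is the routine Mellin/Fourier computation and two trivial comparisons.
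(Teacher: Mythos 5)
Your argument is correct, and it reaches the conclusion by a mechanism that is noticeably different from the one in the paper, although both rest on the same core idea (shift slightly to the left of the half-line by an amount $\asymp (k+1)/\log x$ and exploit that the coefficients are supported on $x^{e^{-(k+1)}}$-smooth integers, so the complete Dirichlet series is just an absolutely convergent Euler product there). The paper treats the tail sum over $n>z$ as an obstruction to applying Lemma~\ref{Plancherel}: it truncates at $y$, performs partial summation to convert to partial sums $\sum_{n\le u}$ weighted by $n^{-(1/2-\sigma)}$ with $\sigma=4(k+1)/\log x$, handles the two boundary terms and the integral term separately (Cauchy--Schwarz on the inner integral, a Rankin/Euler-product bound for the boundary term, a limit in $y$), and only then invokes the recorded Plancherel identity for partial sums. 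You instead prove the tail-sum analogue of Lemma~\ref{Plancherel} directly: with $g(z)=\sum_{n>z,\,P(n)\le x^{e^{-(k+1)}}}d_\alpha(n)f(n)n^{-1/2}$ and $\sigma=2(k+1)/\log x$, the function $G(u)=g(e^u)e^{\sigma u}$ lies in $L^1\cap L^2$ (boundedness of $g$ for the left tail, Rankin decay $|g(z)|\ll_x z^{-1/4}$ together with $\sigma<\tfrac14$ for the right tail), Fubini is justified by $\sum_{P(n)\le y}d_\alpha(n)n^{\sigma-1/2}<\infty$, and Plancherel gives the exact identity $\int_0^\infty |g(z)|^2 z^{2\sigma-1}\,dz=\frac{1}{2\pi}\int_{\mathbb R}\big|F_{k+1}(\tfrac12-\sigma+it)\big|^{2\alpha}\big|\sigma+it\big|^{-2}\,dt$ for every realisation of $f$; the proposition then follows from the monotone comparisons $z^{2k/\log x-1}\le z^{2\sigma-1}$ on $z\ge1$ and extension of the range of integration. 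This buys an exact identity (implied constant $(2\pi)^{-q}\le 1$) and avoids the partial-summation bookkeeping, the Cauchy--Schwarz step and the $y\to\infty$ limit entirely; the paper's route, on the other hand, stays within the stated Lemma~\ref{Plancherel} as a black box, which is the template it reuses from \cite{Harper1}. The only cosmetic point worth mentioning is that for non-integer $\alpha$ the expression $F_{k+1}(w)^\alpha$ should be read as the product of principal-branch factors $(1-f(p)p^{-w})^{-\alpha}$, which is what your series identity produces; since only $|F_{k+1}|^{2\alpha}$ enters the final bound, this is harmless.
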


	We would like to deduce a bound of this type directly from Lemma \ref{Plancherel}, but the $z$-exponent is less than $1$. Thus the idea is to transfer parts of this exponent into the inner sum by means of partial summation, making the $z$-exponent slightly bigger than $1$, and then to apply the Lemma. This causes us to move slightly to the left of the $\frac{1}{2}$-line in terms of $F$, by an amount that should not matter much for the final size of the integrand. This problem does not appear in \cite[Proposition 1]{Harper1} because there, the $z$-exponent is slightly smaller than $2$, thus still far away from the exponent $1$ that limits us in applying Lemma \ref{Plancherel}.
	
	Another issue is that the sum inside the integral now ranges over $n > z$, which is not in the shape of Lemma \ref{Plancherel}, but partial summation also allows us to switch to sums over $n \le z$, assuming that we can deal with the sum over all $n$ with $P(n) \le x^{e^{-(k+1)}}$. But this is again just an Euler product and causes no problems.
	
	\begin{proof}[Proof of Proposition \ref{REP2}.]
		Firstly, we note that
		\begin{align}
		\mathbb{E} &\Bigg[ \bigg( \int_1^{x^{1-e^{-(k+1)}}} \bigg| \sum_{\substack{n > z \\ P(n) \le x^{e^{-(k+1)}}}} \frac{d_\alpha(n) f(n)}{\sqrt{n}}  \bigg|^2 \, \frac{dz}{z^{1-2 k / \log x}} \bigg)^q \bigg] \nonumber \\
		= \lim_{y \to \infty} &\bigg \Vert \int_1^{x^{1-e^{-(k+1)}}} \bigg| \sum_{\substack{z < n \le y \\ P(n) \le x^{e^{-(k+1)}}}} \frac{d_\alpha(n) f(n)}{\sqrt{n}}  \bigg|^2 \, \frac{dz}{z^{1-2 k / \log x}} \bigg \Vert_q^q. \label{limsup}
		\end{align}
		Partial summation applied to the inner sum implies that for any $y > z$ and $\sigma > 0$ we have
		\begin{align}
		&\bigg| \sum_{\substack{z < n \le y \\ P(n) \le x^{e^{-(k+1)}}}} \frac{d_\alpha(n) f(n)}{\sqrt{n}}  \bigg|^2 =  \bigg| \sum_{\substack{z < n \le y \\ P(n) \le x^{e^{-(k+1)}}}} n^{-\sigma} \frac{d_\alpha(n) f(n)}{n^{1/2-\sigma}}  \bigg|^2 \nonumber \\
		\le &\bigg| y^{-\sigma} \sum_{\substack{n \le y \\ P(n) \le x^{e^{-(k+1)}}}} \frac{d_\alpha(n) f(n)}{n^{1/2-\sigma}} \bigg|^2 + \bigg| z^{-\sigma} \sum_{\substack{n \le z \\ P(n) \le x^{e^{-(k+1)}}}} \frac{d_\alpha(n) f(n)}{n^{1/2-\sigma}} \bigg|^2 \nonumber \\
		+ &\bigg| \sigma \int_z^y \sum_{\substack{n \le u \\ P(n) \le x^{e^{-(k+1)}}}} \frac{d_\alpha(n) f(n)}{n^{1/2-\sigma}} \, \frac{du}{u^{1+\sigma}} \bigg|^2. \label{PartialSum}
		\end{align}
		Plugging the first term in (\ref{PartialSum}) into (\ref{limsup}), using that the inner sum does not depend on $z$ and trivially bounding the arising $2 q$-norm by the $2$-norm, gives a contribution
		\begin{align*}
		\le \Big( \int_1^{x^{1-e^{-(k+1)}}} \, \frac{dz}{z^{1-2k/\log x}} \Big)^q \limsup_{y \to \infty} \bigg( y^{-2 \sigma} \sum_{\substack{n \le y \\ P(n) \le x^{e^{-(k+1)}} }} \frac{d_\alpha(n)^2}{n^{1-2 \sigma}} \bigg)^q.
		\end{align*}
		But
		\[ \sum_{\substack{n \le y \\ P(n) \le x^{e^{-(k+1)}} }} \frac{d_\alpha(n)^2}{n^{1-2 \sigma}} \le \sum_{P(n) \le x^{e^{-(k+1)}}} \frac{d_\alpha(n)^2}{n^{1-2 \sigma}} \le \prod_{p \le x^{e^{-(k+1)}}} \left( 1 - \frac{1}{p^{1-2 \sigma}} \right)^{-\alpha^2} \]
		is bounded independent of $y$ (since $\sigma$ will not depend on $y$), hence the contribution vanishes in the limit.
		
		If we plug in the third term of (\ref{PartialSum}) into (\ref{limsup}) with $y$ fixed for now, we arrive at a contribution 
		\[ \bigg \Vert \sigma^2 \int_1^{x^{1-e^{-(k+1)}}} \bigg| \int_z^y \sum_{\substack{n \le u \\ P(n) \le x^{e^{-(k+1)}}}} \frac{d_\alpha(n) f(n)}{n^{1/2-\sigma}} \, \frac{du}{u^{1+\sigma}} \bigg|^2 \, \frac{dz}{z^{1-2 k / \log x}} \bigg \Vert_q^q. \]
		Applying Cauchy-Schwarz to the inner integral and then extending the arising (non-negative) integrals to $\infty$, we see that the last expression is
		\begin{align*} &\le \bigg \Vert \sigma^2 \int_1^{x^{1-e^{-(k+1)}}} \bigg( \int_z^y \Big| \sum_{\substack{n \le u \\ P(n) \le x^{e^{-(k+1)}}}} \frac{d_\alpha(n) f(n)}{n^{1/2-\sigma}} \Big|^2 \, \frac{du}{u^{1+\sigma}} \bigg) \bigg( \int_z^y \, \frac{du}{u^{1+\sigma}} \bigg) \, \frac{dz}{z^{1-2 k / \log x}} \bigg \Vert_q^q \\
		&\le \bigg \Vert \sigma \int_1^{x^{1-e^{-(k+1)}}}  \int_z^\infty \Big| \sum_{\substack{n \le u \\ P(n) \le x^{e^{-(k+1)}}}} \frac{d_\alpha(n) f(n)}{n^{1/2-\sigma}} \Big|^2 \, \frac{du}{u^{1+\sigma}} \, \frac{dz}{z^{1-2 k / \log x + \sigma}} \bigg \Vert_q^q.
		\end{align*}
		Note that the last expression is independent of $y$, so we may take the limit. Lastly, interchanging the two integrals and taking say $\sigma = \frac{4 (k+1)}{\log x}$, we see that this is
		\begin{equation}\label{BeforePl} \ll \bigg \Vert \int_1^\infty \Big| \sum_{\substack{n \le u \\ P(n) \le x^{e^{-(k+1)}}}} \frac{d_\alpha(n) f(n)}{n^{1/2-\sigma}} \Big|^2 \, \frac{du}{u^{1+\sigma}} \bigg \Vert_q^q. \end{equation}
		
		The second term in (\ref{PartialSum}), which is independent of $y$, gives a contribution in (\ref{limsup}) of
		\[ \bigg \Vert \int_1^{x^{1-e^{-(k+1)}}} \Big| \sum_{\substack{n \le z \\ P(n) \le x^{e^{-(k+1)}}}} \frac{d_\alpha(n) f(n)}{n^{1/2-\sigma}} \Big|^2 \, \frac{dz}{z^{1- 2 k / \log x + 2 \sigma}} \bigg \Vert_q^q \]
		and is thus absorbed into (\ref{BeforePl}). But now we can finally apply Lemma \ref{Plancherel} to (\ref{BeforePl}), and we obtain that it is
		\[ \ll \mathbb{E} \Bigg[ \bigg( \int_{\mathbb{R}} \frac{\left| F_k \left( \frac{1}{2} - \frac{\sigma}{2} + it \right) \right|^{2 \alpha}}{\left| \frac{\sigma}{2} + i t \right|^2} \, dt \bigg)^q \Bigg],  \]
		from which the claim follows.
	\end{proof}

	\subsection{Lower bounds}

	Next, we will obtain a lower bound for the pseudomoments in terms of moments of integrals of random Euler products. While the upper bound required considerable work in order to adapt it to the setting here, this part is a rather immediate adaptation of \cite[Proposition 3]{Harper1}.

	\begin{proposition}\label{LowerBoundProp}
		For any $1 \le \alpha \le 2$, any $0 < q < \frac{2(\alpha - 1)}{\alpha^2}$ and any sufficiently large $V$ we have
		\begin{align*}
		\Psi_{2 q, \alpha}(x) &\gg \frac{1}{(\log x)^q} \left \Vert \int_{|t| < 1} \left| F \left( \frac{1}{2} + \frac{2 V}{\log x} + i t \right) \right|^{2 \alpha} \, dt \right \Vert_q^q \\
		&- \frac{1}{e^{V q} (\log x)^q} \left \Vert \int_{\mathbb{R}} \frac{\left| F \left( \frac{1}{2} + \frac{V}{\log x} + i t \right) \right|^{2 \alpha}}{\left| \frac{V}{\log x} + i t \right|^2} \, dt \right \Vert_q^q - O(1).
		\end{align*}
	\end{proposition}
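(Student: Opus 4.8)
The plan is to follow the proof of \cite[Proposition~3]{Harper1} closely. Write $S(u) := \sum_{n\le u,\,P(n)\le x} d_\alpha(n)f(n)/\sqrt n$; since $P(n)\le x$ is automatic for $n\le x$, the Bohr correspondence gives $\Psi_{2q,\alpha}(x) = \mathbb{E}\big[\,|S(x)|^{2q}\,\big]$. The Dirichlet series $\sum_{P(n)\le x} d_\alpha(n)f(n)n^{-w}$ is the finite Euler product $F(w)^\alpha$, so it has abscissa of convergence $0$, and Lemma~\ref{Plancherel} applied to the sequence $(d_\alpha(n)f(n)\mathbf{1}_{P(n)\le x}/\sqrt n)_n$ with $\sigma = 2V/\log x$ gives the identity
\[ \frac{1}{2\pi}\int_{\mathbb R}\frac{|F(\tfrac12+\tfrac{2V}{\log x}+it)|^{2\alpha}}{|\tfrac{2V}{\log x}+it|^2}\,dt = \int_0^\infty \frac{|S(u)|^2}{u^{1+4V/\log x}}\,du. \]
Restricting the left side to $|t|<1$, where $|\tfrac{2V}{\log x}+it|^{-2}\gg 1$ for $x$ large in terms of $V$, raising to the power $q$, taking expectations, splitting $\int_0^\infty = \int_0^x+\int_x^\infty$ and using $(a+b)^q\le a^q+b^q$ for $0<q\le 1$, I reduce to bounding $\mathbb{E}\big[(\int_0^x \tfrac{|S(u)|^2}{u^{1+4V/\log x}}\,du)^q\big]$ and $\mathbb{E}\big[(\int_x^\infty \tfrac{|S(u)|^2}{u^{1+4V/\log x}}\,du)^q\big]$.

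On the range $u\ge x$, writing $S(u)=S(x)+(S(u)-S(x))$ and using $|S(u)|^2\le 2|S(x)|^2+2|S(u)-S(x)|^2$, the $|S(x)|^2$-piece integrates to $\tfrac{e^{-4V}\log x}{2V}|S(x)|^2$, whose $q$-th moment is $\asymp_V (\log x)^q\,\Psi_{2q,\alpha}(x)$; moving this term across the inequality and dividing through produces the main term $\tfrac{1}{(\log x)^q}\|\int_{|t|<1}|F(\tfrac12+\tfrac{2V}{\log x}+it)|^{2\alpha}\,dt\|_q^q$ of the proposition. For the remaining piece $\int_x^\infty \tfrac{|S(u)-S(x)|^2}{u^{1+4V/\log x}}\,du$: since we integrate over $u\ge x$, lowering the exponent from $1+\tfrac{4V}{\log x}$ to $1+\tfrac{2V}{\log x}$ costs only a factor $x^{-2V/\log x}=e^{-2V}$, after which Lemma~\ref{Plancherel} — now for the sequence supported on $\{n>x:P(n)\le x\}$, with Dirichlet series $F(w)^\alpha-\sum_{n\le x}d_\alpha(n)f(n)n^{-w}$ — bounds it by $e^{-2V}\int_{\mathbb R}\tfrac{|F(\tfrac12+\tfrac{V}{\log x}+it)|^{2\alpha}}{|\tfrac{V}{\log x}+it|^2}\,dt$ plus a further $e^{-2V}(\log x)|S(x)|^2$, again absorbed into the $\Psi_{2q,\alpha}(x)$ term.

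On the range $u\le x$ one has $u^{-1-4V/\log x}\le u^{-1-2V/\log x}$ for $u\ge 1$, while $S(u)=0$ for $u<1$; re-expressing $\int_0^x \tfrac{|S(u)|^2}{u^{1+2V/\log x}}\,du$ via Lemma~\ref{Plancherel} applied to the truncated polynomial $\sum_{n\le x}d_\alpha(n)f(n)n^{-w}$ (whose value at $w=\tfrac12$ is $S(x)$) identifies the $u\le x$ contribution, up to the familiar $(\log x)|S(x)|^2$ correction, with an integral of $|F(\tfrac12+\tfrac{V}{\log x}+it)^\alpha-\mathrm{tail}|^2$ which, after the triangle inequality, is controlled by the term to be subtracted and, once more, a tail-like remainder. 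Assembling all of this and rearranging gives a lower bound for $\Psi_{2q,\alpha}(x)$ of the asserted shape. The delicate point — which I would handle exactly as in \cite[Proposition~3]{Harper1} — is to keep track of the $e^{-O(V)}$ savings carefully enough that the coefficient of the subtracted $\int_{\mathbb R}$-term comes out genuinely as $e^{-Vq}$ rather than an absolute constant: the only parts of the various error integrals that do not carry such a saving must either be returned to the left as $\Psi_{2q,\alpha}(x)$ or be bounded $O(1)$, the latter by a Rankin-trick estimate for the complete $x$-smooth sum, analogous to the bound $\sum_{n>x,\,P(n)\le x^{1/\log\log x}}d_\alpha(n)^2/n\ll 1$ from the proof of Proposition~\ref{REP}, which is available here precisely because the full Euler product over $p\le x$ is not too large. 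Apart from this, the argument is a routine adaptation of \cite[Proposition~3]{Harper1}.
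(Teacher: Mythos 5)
Your overall logical direction (proving the equivalent statement that the restricted Euler-product integral is $\ll (\log x)^q\Psi_{2q,\alpha}(x)$ plus the subtracted term plus $O((\log x)^q)$) is fine, but the central step is missing and, as set up, cannot be repaired by bookkeeping. Starting from Plancherel for the complete $x$-smooth sum, you must upper-bound the contribution of the range $u\le x$, i.e. $\mathbb{E}\big[\big(\int_1^x |S(u)|^2\,u^{-1-4V/\log x}\,du\big)^q\big]$ with $S(u)=\sum_{n\le u}d_\alpha(n)f(n)/\sqrt n$, and this range carries \emph{no} $e^{-cV}$ saving, since $u^{-2V/\log x}\asymp 1$ there. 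Your proposed treatment of it is circular: applying Lemma \ref{Plancherel} to the truncated polynomial and writing it as $F^\alpha$ minus the tail gives back $\int_{\mathbb{R}}|F(\tfrac12+\tfrac V{\log x}+it)|^{2\alpha}|\tfrac V{\log x}+it|^{-2}dt$ \emph{without} the factor $e^{-Vq}$, and since the full-line integral dominates the restricted one, the resulting inequality is vacuous; the tail piece is likewise not $O(1)$ (a first-moment bound gives $(\log x)^{(\alpha^2+1)q}$, and anything better requires the paper's entire upper-bound machinery). Nor can this range be ``returned to the left as $\Psi$'': an inequality $\mathbb{E}\big[\big(\int_1^x|S(u)|^2\,du/u\big)^q\big]\ll(\log x)^q\,\mathbb{E}[|S(x)|^{2q}]+(\log x)^q$ is not a soft fact — dyadic subadditivity gives at best $\sum_{j\le\log x}\Psi_{2q,\alpha}(e^j)$, which (already at the conjectured sizes) exceeds $(\log x)^q\Psi_{2q,\alpha}(x)$ by a factor $(\log x)^{1-q}$, and proving the correct statement is essentially as hard as the main theorems of the paper. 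The same defect reappears in your $u\ge x$ analysis: the ``further $e^{-2V}(\log x)|S(x)|^2$'' term is wrong — the polynomial part of the triangle inequality is again the full $\int_0^x|S(u)|^2u^{-1-2V/\log x}du$, not a boundary term.

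The missing idea is the probabilistic transfer that makes the lower bound possible at all, and it forces a different decomposition from yours. The paper (following \cite[Proposition 3]{Harper1}) first shows, via a Rademacher symmetrisation, that $\Psi_{2q,\alpha}(x)\gg\mathbb{E}\big[\big|\sum_{n\le x,\,P(n)>\sqrt x}d_\alpha(n)f(n)/\sqrt n\big|^{2q}\big]$, and then applies Khintchine's inequality conditionally on $(f(p))_{p\le\sqrt x}$ to replace this $2q$-th moment by the $q$-th moment of the conditional square function $\sum_{\sqrt x<p\le x}\frac1p\big|\sum_{m\le x/p}d_\alpha(m)f(m)/\sqrt m\big|^2$; after smoothing this is $\gg(\log x)^{-q}\,\mathbb{E}\big[\big(\int_1^{\sqrt x}|S(z)|^2\,dz/z\big)^q\big]$. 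Only now is the exponent raised to $1+\tfrac{4V}{\log x}$ and $\int_1^{\sqrt x}=\int_1^\infty-\int_{\sqrt x}^\infty$ written, and Lemma \ref{Plancherel} applied to both pieces: the whole-line piece yields the main term (restrict to $t\in[2,4]$ and use translation invariance), while the only error is the $z\ge\sqrt x$ piece, which is exactly where $z^{-4V/\log x}\le e^{-2V}z^{-2V/\log x}$ supplies the $e^{-Vq}$ in front of the subtracted term. In other words, in the correct argument the no-saving range sits on the favourable side of the inequality and never needs to be bounded above, whereas in your set-up it lands on the side that must be controlled. Without the large-prime-factor restriction and the conditional Khintchine step, Plancherel alone cannot transfer largeness of the Euler-product integral back to largeness of $\mathbb{E}|S(x)|^{2q}$, so the proposal as written has a genuine gap at its core.
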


	\begin{proof}
		First, let $\epsilon$ be a Rademacher random variable, i.e. uniformly distributed on $\{ \pm 1 \}$, independent from anything else. Note that we have
		\begin{align*}
			&\mathbb{E} \bigg[ \, \bigg| \sum_{ \substack{ n \le x \\ P(n) > \sqrt{x} } } \frac{d_\alpha(n) f(n)}{\sqrt{n}} \bigg|^{2 q} \bigg] \\ = 
			\frac{1}{2^{2 q}} &\mathbb{E} \bigg[ \, \bigg| \sum_{ \substack{ n \le x \\ P(n) > \sqrt{x} } } \frac{d_\alpha(n) f(n)}{\sqrt{n}} + \sum_{ \substack{ n \le x \\ P(n) \le \sqrt{x} } } \frac{d_\alpha(n) f(n)}{\sqrt{n}} + \sum_{ \substack{ n \le x \\ P(n) > \sqrt{x} } } \frac{d_\alpha(n) f(n)}{\sqrt{n}} - \sum_{ \substack{ n \le x \\ P(n) \le \sqrt{x} } } \frac{d_\alpha(n) f(n)}{\sqrt{n}}\bigg|^{2 q} \bigg] \\
			\le \mathbb{E} \bigg[ \, &\bigg| \sum_{ \substack{ n \le x \\ P(n) > \sqrt{x} } } \frac{d_\alpha(n) f(n)}{\sqrt{n}} + \sum_{ \substack{ n \le x \\ P(n) \le \sqrt{x} } } \frac{d_\alpha(n) f(n)}{\sqrt{n}} \bigg|^{2 q} \bigg] + \mathbb{E} \bigg[ \, \bigg| \sum_{ \substack{ n \le x \\ P(n) > \sqrt{x} } } \frac{d_\alpha(n) f(n)}{\sqrt{n}} - \sum_{ \substack{ n \le x \\ P(n) \le \sqrt{x} } } \frac{d_\alpha(n) f(n)}{\sqrt{n}}\bigg|^{2 q} \bigg] \\
			= 2 \mathbb{E} &\bigg[ \, \bigg| \epsilon \sum_{ \substack{ n \le x \\ P(n) > \sqrt{x} } } \frac{d_\alpha(n) f(n)}{\sqrt{n}} + \sum_{ \substack{ n \le x \\ P(n) \le \sqrt{x} } } \frac{d_\alpha(n) f(n)}{\sqrt{n}} \bigg|^{2 q} \bigg] \\
			= 2 \mathbb{E} &\Bigg[ \mathbb{E} \bigg[ \, \bigg| \epsilon \sum_{ \substack{ n \le x \\ P(n) > \sqrt{x} } } \frac{d_\alpha(n) f(n)}{\sqrt{n}} + \sum_{ \substack{ n \le x \\ P(n) \le \sqrt{x} } } \frac{d_\alpha(n) f(n)}{\sqrt{n}} \bigg|^{2 q} \; \; \bigg| \; \; (f(p))_{p \le \sqrt{x}} \bigg] \Bigg]
			= 2 \mathbb{E} \bigg[ \, \bigg| \sum_{ n \le x } \frac{d_\alpha(n) f(n)}{\sqrt{n}} \bigg|^{2 q} \bigg],
		\end{align*}
		where the second step follows from noting that for $q \le \frac{1}{2}$ we have  $|a+b|^{2 q} \le |a|^{2 q} + |b|^{2 q} \le 2^{2 q}(|a|^{2 q} + |b|^{2 q})$, and the last step follows from the fact that the law of
		\begin{equation*} \epsilon \sum_{\substack{n \le x \\ P(n) > \sqrt{x} }} \frac{d_\alpha(n) f(n)}{\sqrt{n}} = \epsilon \sum_{ \sqrt{x} < p \le x } \frac{d_\alpha(p) f(p)}{\sqrt{p}} \sum_{m \le x/p} \frac{d_\alpha(m) f(m)}{\sqrt{m}} \end{equation*}
		conditional on $(f(p))_{p \le \sqrt{x}}$ and the law of $\sum_{ \substack{ n \le x \\ P(n) > \sqrt{x} } } \frac{d_\alpha(n) f(n)}{\sqrt{n}}$ coincide. From the Bohr correspondence, we thus see that
		\[ \Psi_{2 q, \alpha}(x) = \mathbb{E} \bigg[ \, \bigg| \sum_{ n \le x } \frac{d_\alpha(n) f(n)}{\sqrt{n}} \bigg|^{2 q} \bigg] \gg \mathbb{E} \bigg[ \, \bigg| \sum_{ \substack{ n \le x \\ P(n) > \sqrt{x} } } \frac{d_\alpha(n) f(n)}{\sqrt{n}} \bigg|^{2 q} \bigg]. \]
		Khintchine's inequality (see e.g. Lemma 3.8.1 of Gut \cite{Gut1} for the Rademacher case of this, the Steinhaus case follows by a similar argument, as is also mentioned in the proof of \cite[Proposition 3]{Harper1})
		now tells us that
		\begin{align*}
			\mathbb{E} \bigg[ \, \bigg| \sum_{ \substack{ n \le x \\ P(n) > \sqrt{x} } } \frac{d_\alpha(n) f(n)}{\sqrt{n}} \bigg|^{2 q} \bigg] &\gg \mathbb{E} \bigg[ \bigg( \sum_{ \sqrt{x} < p \le x } \frac{1}{p} \bigg| \sum_{m \le x/p} \frac{d_\alpha(m) f(m)}{\sqrt{m}} \bigg|^2 \bigg)^q \bigg],
		\end{align*}
		where we have used that $d_\alpha(p)^2 = \alpha^2 \gg 1$. Noting that $|a+b|^2 \ge \frac{1}{4}|a|^2 - |b|^2$, we can now smoothen the inner sum. Again setting $X = \exp(\sqrt{\log x})$, we deduce that this is in turn
		\begin{align}
		&= \mathbb{E} \bigg[ \bigg( \sum_{ \sqrt{x} < p \le x } \frac{X}{p^2} \int_p^{p(1+1/X)} \bigg| \sum_{m \le x/p} \frac{d_\alpha(m) f(m)}{\sqrt{m}} \bigg|^2 \, dt \bigg)^q \bigg] \nonumber \\
		&\gg \frac{1}{4^q} \mathbb{E} \bigg[ \bigg( \sum_{ \sqrt{x} < p \le x } \frac{X}{p^2} \int_p^{p(1+1/X)} \bigg| \sum_{m \le x/t} \frac{d_\alpha(m) f(m)}{\sqrt{m}} \bigg|^2 \, dt \bigg)^q \bigg] \nonumber \\
		&- \mathbb{E} \bigg[ \bigg( \sum_{ \sqrt{x} < p \le x } \frac{X}{p^2} \int_p^{p(1+1/X)} \bigg| \sum_{x/t < m \le x/p} \frac{d_\alpha(m) f(m)}{\sqrt{m}} \bigg|^2 \, dt \bigg)^q \bigg]. \label{LBSmoothing}
		\end{align}
		The only significant difference in this argument compared to the proof of \cite[Proposition 3]{Harper1} is the handling of the smoothing error here: By H\"{o}lder's inequality and orthogonality, and in the end trivially bounding the sum over primes by the sum over all integers, we have
		\begin{align*}\allowdisplaybreaks
		&\quad \; \mathbb{E} \bigg[ \bigg( \sum_{ \sqrt{x} < p \le x } \frac{X}{p^2} \int_p^{p(1+1/X)} \bigg| \sum_{x/t < m \le x/p} \frac{d_\alpha(m) f(m)}{\sqrt{m}} \bigg|^2 \, dt \bigg)^q \bigg] \\
		&\le \bigg( \sum_{ \sqrt{x} < p \le x } \frac{X}{p^2} \int_p^{p(1+1/X)} \mathbb{E} \bigg[ \, \bigg| \sum_{x/t < m \le x/p} \frac{d_\alpha(m) f(m)}{\sqrt{m}} \bigg|^2 \, \bigg] \, dt \bigg)^q \\
		&\le \bigg( \sum_{ \sqrt{x} < p \le x } \frac{1}{p} \sum_{\frac{x}{p(1+1/X)} < m \le \frac{x}{p}} \frac{d_\alpha(m)^2}{m} \bigg)^q \le \bigg( \sum_{m \le \sqrt{x}} \frac{d_\alpha(m)^2}{m} \sum_{ \frac{x}{m(1+1/X)} < p \le \frac{x}{m}} \frac{1}{p} \bigg)^q \\
		&\ll \bigg( \frac{1}{X} \sum_{m \le x} \frac{d_\alpha(m)^2}{m} \bigg)^q \ll \left(
		\frac{(\log x)^{\alpha^2}}{X} \right)^q \ll 1.
		\end{align*}
		The rest of the claim follows by a rather direct adaptation of the argument there. One bounds the first term in (\ref{LBSmoothing}) from below by
		\[ \mathbb{E} \bigg[ \bigg( \int_1^{\sqrt{x}} \bigg| \sum_{m \le z} \frac{d_\alpha(m) f(m)}{\sqrt{m}} \bigg|^2 \, \frac{dz}{z} \bigg)^q \bigg], \]
		then slightly increases the $z$-exponent from $1$ to $1 + \frac{4 V}{\log x}$ and writes $\int_1^{\sqrt{x}} = \int_1^\infty - \int_{\sqrt{x}}^{\infty}$. After some manipulations, one can then apply Parseval's identity (Lemma \ref{Plancherel}) to both terms, trivially bounding the first one from below by its contribution from say $t \in [2,4]$ and using translation-invariance in law, to deduce the claim.
		
	\end{proof}

	The exact same argument also gives the following
	
	\begin{proposition}\label{LowerBoundPropLargeAlpha}
		For any $\alpha \ge 2$, any $0 < q < \frac{1}{2}$ and any sufficiently large $V$ we have
		\begin{align*}
		\Psi_{2 q, \alpha}(x) &\gg \frac{1}{(\log x)^{q+2q(\alpha^2/4 - 1)}} \left \Vert \int_{|t| < (\log x)^{\alpha^2/4-1}} \left| F \left( \frac{1}{2} + \frac{2 V}{\log x} + i t \right) \right|^{2 \alpha} \, dt \right \Vert_q^q \\
		&- \frac{1}{e^{V q} (\log x)^q} \left \Vert \int_{\mathbb{R}} \frac{\left| F \left( \frac{1}{2} + \frac{V}{\log x} + i t \right) \right|^{2 \alpha}}{\left| \frac{V}{\log x} + i t \right|^2} \, dt \right \Vert_q^q - O(1).
		\end{align*}
	\end{proposition}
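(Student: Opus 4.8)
The plan is to run the proof of Proposition~\ref{LowerBoundProp} essentially verbatim, changing only the very last step. As there, I would first apply the Rademacher symmetrisation together with the Bohr correspondence to reduce to $\Psi_{2q,\alpha}(x) \gg \mathbb{E}[\,|\sum_{n \le x,\, P(n) > \sqrt{x}} d_\alpha(n) f(n) n^{-1/2}|^{2q}\,]$, then invoke Khintchine's inequality (using $d_\alpha(p)^2 = \alpha^2 \gg 1$) to pass to $\mathbb{E}[(\sum_{\sqrt{x} < p \le x} \frac{1}{p} |\sum_{m \le x/p} d_\alpha(m) f(m) m^{-1/2}|^2)^q]$, and smoothen the inner sum exactly as before. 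The smoothing error is handled identically — the only input is $(\log x)^{\alpha^2}/X \ll 1$ with $X = \exp(\sqrt{\log x})$, which holds for any fixed $\alpha$ — and, converting the prime sum over $(\sqrt{x}, x]$ into an integral and using that the density factor $1/\log(x/z)$ on that range is $\ge 1/\log x$, one is left with a lower bound $\gg \frac{1}{(\log x)^q}\mathbb{E}[(\int_1^{\sqrt{x}} |\sum_{m \le z} d_\alpha(m) f(m) m^{-1/2}|^2 \, \frac{dz}{z})^q]$.

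Next, again as in Proposition~\ref{LowerBoundProp}, I would replace $\frac{dz}{z}$ by the smaller $\frac{dz}{z^{1 + 4V/\log x}}$ (legitimate since $z \ge 1$), write $\int_1^{\sqrt{x}} = \int_1^\infty - \int_{\sqrt{x}}^\infty$, use $(a+b)^q \le a^q + b^q$ for $q \le 1$ to get $(\int_1^{\sqrt{x}})^q \ge (\int_1^\infty)^q - (\int_{\sqrt{x}}^\infty)^q$, and then apply Rankin's trick to the tail: for $z \ge \sqrt{x}$ we have $z^{-2V/\log x} \le e^{-V}$, so $\int_{\sqrt{x}}^\infty \cdots z^{-(1+4V/\log x)}\,dz \le e^{-V} \int_1^\infty \cdots z^{-(1+2V/\log x)}\,dz$. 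Applying Lemma~\ref{Plancherel} to both integrals — noting that after restricting the inner Dirichlet series to integers with $P(m) \le x$ it is a finite Euler product, so the abscissa-of-convergence hypothesis is met for any positive shift, and that $\sum_{P(m)\le x} d_\alpha(m) f(m) m^{-s} = F(s)^\alpha$ — produces exactly the error term $\frac{1}{e^{Vq}(\log x)^q}\Vert \int_{\mathbb{R}} \frac{|F(1/2 + V/\log x + it)|^{2\alpha}}{|V/\log x + it|^2}\,dt\Vert_q^q$ of the statement (this term does not involve $\alpha$ beyond the exponent, so it is identical to the one in Proposition~\ref{LowerBoundProp}) together with a main term $\frac{1}{(\log x)^q}\mathbb{E}[(\int_{\mathbb{R}} \frac{|F(1/2 + 2V/\log x + it)|^{2\alpha}}{|2V/\log x + it|^2}\,dt)^q]$.

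The only change occurs in bounding this last main integral from below. Whereas in Proposition~\ref{LowerBoundProp} one keeps $t$ in a fixed interval such as $[2,4]$, on which the denominator is $\asymp 1$, here I would instead keep $t$ in an interval of length $2(\log x)^{\alpha^2/4-1}$ situated at height $\asymp (\log x)^{\alpha^2/4-1}$, say $t \in [3T, 5T]$ with $T := (\log x)^{\alpha^2/4-1}$. Since $\alpha \ge 2$ we have $T \ge 1$, which for $x$ large dominates $V/\log x$, so on this interval $|2V/\log x + it|^2 \asymp T^2 = (\log x)^{2(\alpha^2/4-1)}$; pulling this factor out and using translation-invariance in law of $F$ to recentre $[3T,5T]$ to $(-T,T)$ yields $\gg \frac{1}{(\log x)^{2q(\alpha^2/4-1)}}\Vert \int_{|t| < (\log x)^{\alpha^2/4-1}} |F(1/2 + 2V/\log x + it)|^{2\alpha}\,dt\Vert_q^q$. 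Combined with the $(\log x)^{-q}$ already present, this gives the claimed prefactor $(\log x)^{-q - 2q(\alpha^2/4-1)}$.

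I do not expect a genuine obstacle here: every step of Proposition~\ref{LowerBoundProp} goes through for an arbitrary fixed $\alpha \ge 1$, and the only point requiring attention is the elementary estimate $|2V/\log x + it|^2 \asymp T^2$ on the widened window together with the recentring. It is precisely the hypothesis $\alpha \ge 2$ that makes the window $(\log x)^{\alpha^2/4-1}$ of length $\ge 1$, so that shifting it away from the origin and invoking translation-invariance is meaningful and the denominator is comparable to $T^2$; for $1 \le \alpha < 2$ one wants the narrow-window bound of Proposition~\ref{LowerBoundProp} instead, and at $\alpha = 2$ the two propositions coincide.
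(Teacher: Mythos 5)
The proposal is correct and takes essentially the same route as the paper: the paper states explicitly that Proposition~\ref{LowerBoundPropLargeAlpha} follows from the identical argument as Proposition~\ref{LowerBoundProp}, with the sole change being that in the final step the first Plancherel integral is bounded below by its contribution from a window of length $\asymp (\log x)^{\alpha^2/4-1}$ centred at height $\asymp (\log x)^{\alpha^2/4-1}$ (the paper picks $[2T,4T]$, you pick $[3T,5T]$ with $T = (\log x)^{\alpha^2/4-1}$, an immaterial difference), after which translation-invariance in law recentres the window at the origin. Your bookkeeping of the denominator factor $|2V/\log x + it|^2 \asymp T^2$ and the resulting prefactor $(\log x)^{-q - 2q(\alpha^2/4-1)}$ is correct, as is the observation that the error term is identical to that of Proposition~\ref{LowerBoundProp}.
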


	The only difference in the proof lies in the very last step, where we bound the first integral from below by its contribution from say $t \in [2(\log x)^{\alpha^2/4-1}, 4(\log x)^{\alpha^2/4-1}] $ and use translation-invariance in law to shift it around $0$. This is suggested to give the main contribution for $\alpha > 2$ by our heuristic in the introduction.

	\section{Estimates for expectations of random Euler products}
	
	In this section, we will record some Lemmas regarding expectations of random Euler products evaluated at two fixed points (as opposed to  expectations of integrals of random Euler products over a range). By means of various applications of H\"older's inequality, we will essentially reduce to this case in the next section. The results here are immediate generalizations of known results and their proofs are mainly recorded here for convenience and completeness.
	
		\begin{lemma}\label{GMomentBounds}
		Let $\frac{3}{2} \le y < z$ with $z$ sufficiently large, let $\sigma \ge - \frac{10}{\log z}$ and define
		\[ G(t) := G_{y,z,\sigma}(t) := \prod_{y < p \le z} \left( 1 - \frac{f(p)}{p^{\frac{1}{2} + \sigma + it}} \right)^{-1}. \]
		Then for fixed $b,c \in \mathbb{R}$ we have
		\[ \mathbb{E} \left[ |G(t)|^b |G(0)|^c \right] = \exp \left( \sum_{y < p \le z} \frac{b^2+ 2 b c \cos(t \log p) + c^2}{4 p^{1+2\sigma}} + O\left( \frac{1}{\sqrt{y} \log y }\right) \right). \]
		In particular, if $|t| \ll \frac{1}{\log z}$, we have
		\[ \mathbb{E} \left[ |G(t)|^b |G(0)|^c \right] \ll \left( \frac{\log z}{\log y} \right)^{(b+c)^2/4}. \]
	\end{lemma}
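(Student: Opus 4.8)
The plan is to compute the expectation by factoring over primes and handling each Euler factor separately. Since the $f(p)$ are independent over primes, and $G(t)$ is a product over $y < p \le z$ of factors depending only on $f(p)$, we have
\[
\mathbb{E}\left[ |G(t)|^b |G(0)|^c \right] = \prod_{y < p \le z} \mathbb{E}\left[ \left| 1 - \frac{f(p)}{p^{1/2+\sigma+it}} \right|^{-b} \left| 1 - \frac{f(p)}{p^{1/2+\sigma}} \right|^{-c} \right].
\]
For each prime, $f(p) = e^{i\theta}$ with $\theta$ uniform on $[0,2\pi)$, so the expectation is an integral over $\theta$ of $|1 - r e^{i(\theta - t\log p)}|^{-b} |1 - r e^{i\theta}|^{-c}$ with $r = p^{-1/2-\sigma}$. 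The key step is a Taylor expansion in $r$: writing $\log|1-re^{i\phi}|^{-1} = \sum_{j\ge 1} \frac{r^j}{j}\cos(j\phi)$, one expands the logarithm of the integrand, so that the integrand equals $\exp\big( b\sum_j \frac{r^j}{j}\cos(j(\theta - t\log p)) + c\sum_j \frac{r^j}{j}\cos(j\theta)\big)$. Expanding the exponential and integrating over $\theta$ kills all terms except those where the frequencies cancel; the dominant surviving contribution at order $r^2$ is $\frac{1}{4}(b^2 + 2bc\cos(t\log p) + c^2)r^2$, with all higher-order terms contributing $O(r^3) = O(p^{-3/2+O(1/\log z)})$ per prime uniformly. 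Taking logarithms and summing over $y < p \le z$, the error sums to $\sum_{p > y} O(p^{-3/2}) = O(1/(\sqrt{y}\log y))$ by partial summation and the prime number theorem (or Mertens), giving the stated formula with $r^2 = p^{-1-2\sigma}$.

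For the "in particular" clause, I would use the main formula with the observation that when $|t| \ll \frac{1}{\log z}$ we have $|t \log p| \ll 1$ uniformly for all $p \le z$, hence $\cos(t\log p) = 1 + O((t\log p)^2) = 1 + O(1)$ — actually more carefully, $\cos(t\log p) \le 1$ always, so $b^2 + 2bc\cos(t\log p) + c^2 \le b^2 + 2|bc| + c^2 = (|b|+|c|)^2$; but to get $(b+c)^2/4$ rather than $(|b|+|c|)^2/4$ one needs $\cos(t\log p)$ close to $1$, which holds here since $|t\log p| \ll 1$ gives $\cos(t\log p) = 1 + O(1)$... I would instead note $2bc\cos(t\log p) = 2bc + 2bc(\cos(t\log p)-1) = 2bc + O(b c \cdot t^2 \log^2 p)$, so that
\[
\sum_{y < p \le z} \frac{b^2 + 2bc\cos(t\log p) + c^2}{4 p^{1+2\sigma}} = \frac{(b+c)^2}{4}\sum_{y < p \le z}\frac{1}{p^{1+2\sigma}} + O\Big( t^2 \sum_{p \le z} \frac{\log^2 p}{p^{1+2\sigma}} \Big).
\]
Using $\sigma \ge -10/\log z$ one has $p^{-2\sigma} \ll 1$ for $p \le z$, so $\sum_{p \le z} p^{-1-2\sigma}\log^2 p \ll \log^2 z$ by partial summation, and the error term is $O(t^2 \log^2 z) = O(1)$. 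Finally $\sum_{y < p \le z} p^{-1-2\sigma} = \log\log z - \log\log y + O(1) = \log\frac{\log z}{\log y} + O(1)$ by Mertens (again using $p^{-2\sigma} = 1 + O(1)$ in this range, or more precisely a standard estimate), so exponentiating yields $\mathbb{E}[|G(t)|^b|G(0)|^c] \ll (\log z / \log y)^{(b+c)^2/4}$.

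The main obstacle is making the Taylor expansion argument fully uniform: one must check that the error from truncating the series for $\log|1-re^{i\phi}|$ and from expanding the exponential is genuinely $O(p^{-3/2})$ uniformly in $t$, $b$, $c$ (with $b,c$ fixed) and in $\sigma \ge -10/\log z$, which requires that $r = p^{-1/2-\sigma}$ stays bounded away from $1$ — this is where the hypothesis $\sigma \ge -10/\log z$ together with $p \le z$ (giving $r \le p^{-1/2}\cdot p^{10/\log z} \le p^{-1/2} e^{10}$, hence $r \ll p^{-1/2} \le y^{-1/2}$, which is small since $y \ge 3/2$... actually for small $p$ near $y = 3/2$ this needs $z$ large so that the relevant primes are not too small, but since $y \ge 3/2$ the smallest prime is $2$ and $r \le 2^{-1/2}e^{10}$ which is \emph{not} small) — so in fact one should absorb the finitely many smallest primes into the $O(1)$ and run the expansion only for $p$ large enough that $r < 1/2$, say. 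This bookkeeping, while routine, is the part requiring the most care; everything else is a direct generalization of the standard Euler product moment computation (as in Harper \cite{Harper1} or Heap--Radziwiłł--Soundararajan \cite{HRS1}).
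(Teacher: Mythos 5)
Your plan is correct and is essentially the paper's own approach: the paper obtains the main formula by invoking Harper's Euler Product Result 1, whose proof is exactly your per-prime factorization and Taylor-expansion computation (with the boundedly many small primes, where $r=p^{-1/2-\sigma}$ is not small, absorbed into the error since $b,c$ are fixed and $z$ is large -- the point you correctly flag), and it deduces the second claim by the same expansion of $\cos(t\log p)$ together with a shifted Mertens estimate. The one step to tighten is your final Mertens claim: you need the additive estimate $p^{-2\sigma}=1+O\left(\frac{\log p}{\log z}\right)$ for $p\le z$, so that the error sums to an additive $O(1)$ in the exponent, rather than merely $p^{-2\sigma}\ll 1$, which by itself would only give $\sum_{y<p\le z}p^{-1-2\sigma}\ll \log\frac{\log z}{\log y}$ and hence an inflated exponent of $\frac{\log z}{\log y}$; this is precisely the ``standard estimate'' you allude to and is how the paper argues.
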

	
	\begin{proof}
		The first part follows essentially from the same argument as in the proof of \cite[Euler Product Result 1]{Harper2}. Note that the parameters corresponding to $b$ and $c$ are assumed to be non-negative there, but this makes no difference in the proof. We will crucially need to apply it for negative values as well.
		
		Note also that $y$ is assumed to be sufficiently large there, depending only on $b$ and $c$. Since these parameters are assumed to be fixed here, we can absorb smaller values of $p$ into the error term as long as $z$ is sufficiently large, so that $\sigma$ can not be too negative.
		
		To deduce the second claim, note that since easily $\cos(x) = 1 + O(x)$, we have
		\[ 
		\sum_{y < p \le z} \frac{\cos ( t \log p)}{p^{1 + 2 \sigma}} = \sum_{y < p \le z} \frac{1}{p^{1+2 \sigma}} + O \left( |t| \sum_{y < p \le z} \frac{\log p}{p^{1+2 \sigma}} \right). \]
		Hence, we obtain that
		\[ \mathbb{E} \left[ |G(t)|^b |G(0)|^c \right] \ll \exp \bigg( \sum_{y < p \le z} \frac{(b+c)^2}{4 p ^{1+2 \sigma}} + O \Big( \frac{1}{\log z} \sum_{y < p \le z} \frac{\log p}{p^{1+2 \sigma}} \Big) \bigg). \]
		Now note that, since $|e^x - 1| = O(x)$ for $|x| \ll 1$, on our range of $\sigma$ we have
		\[ \sum_{y < p \le z} \frac{1}{p^{1+2 \sigma}} \le \sum_{y < p \le z} \frac{1}{p^{1- 20 / \log z}} = \sum_{y < p \le z} \frac{1}{p} + O \left( \frac{1}{\log z} \sum_{y < p \le z} \frac{\log p}{p} \right) = \log \log z - \log \log y + O(1). \]
		Since we also have
		\[ \frac{1}{\log z} \sum_{y < p \le z} \frac{\log p}{p^{1 + 2 \sigma}} \ll \frac{1}{\log z} \sum_{y < p \le z} \frac{\log p}{p} \ll 1, \]
		the second claim follows.
	\end{proof}

	Putting together Proposition \ref{REP}, Proposition \ref{REP2} and Lemma \ref{GMomentBounds}, we immediately deduce the following
	
	\begin{proposition}\label{REPLast}
		Let $\alpha \ge 1$ and $0 < q \le \frac{1}{2}$ be fixed and let $K = [\log \log \log x]$.
		Then we have
		\begin{align*}
		\Psi_{2q,\alpha}(x) \ll (\log x)^{(q \alpha)^2} + \frac{1}{(\log x)^q} \sum_{0 \le k \le K} \mathbb{E} \Bigg[ \bigg( \int_{\mathbb{R}} \frac{\left| F_{k+1} \left( \frac{1}{2} - \frac{2 (k+1)}{\log x} + it \right) \right|^{2 \alpha}}{\left| \frac{2 (k+1)}{\log x} + i t \right|^2} \, dt \bigg)^q \Bigg].
		\end{align*}
	\end{proposition}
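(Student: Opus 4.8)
The plan is simply to chain the three preceding results. First I would apply Proposition~\ref{REP}, which bounds $\Psi_{2q,\alpha}(x)$ by $(\log x)^{-q}$ times a sum over $0\le k\le K$ of the expectations of $q$-th powers of the truncated $z$-integrals appearing there, plus the diagonal sum $\sum_{0\le k\le K}e^{-e^kq}\,\mathbb{E}[|F_k(1/2)|^{2\alpha q}]$ and an $O(1)$. To each of the $z$-integral terms I would then apply Proposition~\ref{REP2}, which bounds the $k$-th one by $\mathbb{E}\big[\big(\int_{\mathbb{R}}|F_{k+1}(\tfrac12-\tfrac{2(k+1)}{\log x}+it)|^{2\alpha}\,|\tfrac{2(k+1)}{\log x}+it|^{-2}\,dt\big)^q\big]$; summing over $0\le k\le K$ and reinstating the factor $(\log x)^{-q}$ gives precisely the main term in the assertion.

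It then remains to check that $\sum_{0\le k\le K}e^{-e^kq}\,\mathbb{E}[|F_k(1/2)|^{2\alpha q}]+O(1)\ll(\log x)^{(q\alpha)^2}$. For this I would note that $F_k(\tfrac12+it)=\prod_{p\le x^{e^{-k}}}(1-f(p)p^{-1/2-it})^{-1}$ is exactly $G_{y,z,\sigma}(t)$ from Lemma~\ref{GMomentBounds} with $y=\tfrac32$, $z=x^{e^{-k}}$ and $\sigma=0$ (and $z$ is large once $x$ is). The second part of that Lemma, applied with $b=2\alpha q$, $c=0$ and $t=0$ (which certainly satisfies $|t|\ll 1/\log z$), yields $\mathbb{E}[|F_k(1/2)|^{2\alpha q}]\ll(\log(x^{e^{-k}})/\log\tfrac32)^{(2\alpha q)^2/4}\ll(e^{-k}\log x)^{(q\alpha)^2}\le(\log x)^{(q\alpha)^2}$, uniformly in $0\le k\le K$ since $(q\alpha)^2>0$. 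Pulling this bound out of the sum leaves $(\log x)^{(q\alpha)^2}\sum_{k\ge0}e^{-e^kq}$, a convergent series because $q>0$; and the $O(1)$ is harmless since $(q\alpha)^2>0$. Combining the two parts proves the Proposition.

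There is no real difficulty here: the statement is a bookkeeping consequence of Propositions~\ref{REP} and~\ref{REP2} together with Lemma~\ref{GMomentBounds}. The one point worth a moment's attention is that the diagonal sum must not cost an extra factor of $K\asymp\log\log\log x$; this is guaranteed by the rapidly decaying weight $e^{-e^kq}$, whereas no such weight is present for the $z$-integral terms, which is exactly why the sum $\sum_{0\le k\le K}$ is (correctly) retained in the final statement.
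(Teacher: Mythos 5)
Your proposal is correct and is exactly the paper's argument: the paper deduces Proposition \ref{REPLast} by chaining Propositions \ref{REP} and \ref{REP2} and then disposing of the diagonal terms via Lemma \ref{GMomentBounds} (giving $\mathbb{E}[|F_k(1/2)|^{2\alpha q}]\ll (e^{-k}\log x)^{(q\alpha)^2}$, summed against the convergent weights $e^{-e^k q}$), just as you do.
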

	
	\section{Bounds for moments of integrals of random Euler products}
	
	\subsection{Upper bounds}
	
	A rather natural thing to do when arriving at Proposition \ref{REPLast} is to first subdivide the integral depending on whether $|t| \le \frac{2 (k+1)}{\log x}$ or not. Next one can dyadically decompose the range $|t| > \frac{2 (k+1)}{\log x}$, noting that on each of these intervals the denominator is roughly constant. Assuming $t > 0$ simply by symmetry in law, we are thus left with the task of bounding expressions of the type
	\[ \mathbb{E} \bigg[ \Big( \int_T^{2 T} \left| F_{k+1} \left( \frac{1}{2} - \frac{2 (k+1)}{\log x} + it \right) \right|^{2 \alpha} \, dt \Big)^q \bigg] \]
	for various sizes of $T \ge \frac{2 (k+1)}{\log x}$.
	
	Comparing with the heuristic argument in the introduction, perhaps this is a good place to point out that for general values of $k$ in Proposition \ref{REPLast}, there are not just two ranges of $T$ to consider, as was outlined in the introduction, but in fact three. The reason for this is that the Euler product $F_k$ ranges up to $x^{e^{-k}}$, and is thus expected to vary on a scale of $\frac{e^k}{\log x}$ rather than $\frac{1}{\log x}$.
	
	The first range of $T$ is therefore the range $\frac{2 (k+1)}{\log x} \ll T \ll \frac{e^k}{\log x}$, where the Euler product should be roughly constant on the whole range (or equivalently, all primes are small), and in this case it turns out not to be too difficult to bound the corresponding contribution. This will be the subject of Proposition \ref{SmallT}. The range $|t| \le \frac{2 (k+1)}{\log x}$ can be bounded in the same way, noting that by translation-invariance in law its contribution is the same as the one from $T = \frac{4 (k+1)}{\log x}$.
	
	The next range is when $\frac{e^k}{\log x} \ll T \ll 1$, and it will be dealt with in Proposition \ref{MediumT}. Here, we split the Euler product $F_k$ into the small primes $p \le \exp(1/T)$ where the Euler product factors are roughly constant, and the large primes $\exp(1/T) < p \le x^{e^{-k}}$. In order to bound the contribution of this range, the basic strategy is to define certain events which state that the Euler product over the large primes is not exceedingly large at discretised points with distance $\frac{e^k}{\log x}$. We bound the contribution to the expectation under this event by means of H\"{o}lder's inequality, although one has to be somewhat careful to apply it in an effective manner. Similar applications of H\"{o}lder's inequality, where primes are divided into small and large ones and exponents are distributed in an appropriate manner, can be found in \cite[Section 5.4]{Harper2}.
	
	We then split the complimentary event that the Euler product over large primes is large at some discretised point into several subevents according to the size of this Euler product. We then exploit the fact that these events have a very small probability by means of Chebyshev's inequality after again finding an effective way to apply H\"{o}lder's inequality.
	
	The last range is when $T \gg 1$, and is the subject of Proposition \ref{LargeT}. On this range, there are no Euler product factors whose contribution we expect to be roughly constant over the whole interval (all primes are large). We proceed in a very similar fashion to the proof of Proposition \ref{MediumT}, except that no splitting of the Euler product is necessary and we use a different bound in the definition of the event that this product is large.
	
	In the following, to shorten the notation we set
	\[ G_k(t) := F_k \left( \frac{1}{2} + \sigma + it \right). \]
	
	\begin{proposition}\label{SmallT}
		Let $\alpha \ge 1$ and $K = [ \log \log \log x]$. Suppose that $0 < q < \frac{1}{\alpha}$ and $0 \le k \le K$. Assume further that $0 < T \le \frac{e^k}{\log x}$ and that $\sigma \ge - \frac{2 (k+1)}{\log x}$. Then we have
		\[ \mathbb{E} \left[ \left( \int_T^{2 T} \left| G_k (t) \right|^{2 \alpha} \, dt \right)^q \right] \ll 
		e^{-k (q \alpha)^2} T^q (\log x)^{(q \alpha)^2}. \]
	\end{proposition}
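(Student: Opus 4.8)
The plan is to pass from the $q$-th moment of the integral to the $2q\alpha$-th moment of a supremum, and to control that supremum by a one-dimensional Sobolev-type inequality together with Lemma \ref{GMomentBounds}. Since $q>0$ and the integrand is nonnegative, the trivial estimate
\[ \Big( \int_T^{2T} |G_k(t)|^{2\alpha} \, dt \Big)^q \le T^q \sup_{t \in [T,2T]} |G_k(t)|^{2 q \alpha} \]
holds, so it suffices to show that $\mathbb{E}\big[ \sup_{t \in [T,2T]} |G_k(t)|^{2 q \alpha} \big] \ll e^{-k(q\alpha)^2}(\log x)^{(q\alpha)^2}$. Heuristically this should hold because $F_k$ involves only primes $p \le z := x^{e^{-k}}$, so on an interval of length $T \le e^k/\log x$ the phase $t\log p$ moves by at most $T\log z \le 1$; thus $|G_k|$ is ``roughly constant'' on $[T,2T]$, and its supremum should be comparable to a single value $|F_k(\frac{1}{2}+\sigma+it)|^{2q\alpha}$, whose expectation Lemma \ref{GMomentBounds} evaluates as $\asymp (\log z)^{(q\alpha)^2} = e^{-k(q\alpha)^2}(\log x)^{(q\alpha)^2}$.

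To make the control of the supremum rigorous I would set $h(t) := |G_k(t)|^{2q\alpha} = |F_k(\frac{1}{2}+\sigma+it)|^{2q\alpha}$, a positive smooth function of $t$ (a finite nonvanishing product), and use that $\frac{d}{dt}\log|F_k(\frac{1}{2}+\sigma+it)| = \Im Y(t)$, where $Y(t) := - (F_k'/F_k)(\frac{1}{2}+\sigma+it) = \sum_{p \le z} \log p \cdot \frac{f(p) p^{-1/2-\sigma-it}}{1 - f(p) p^{-1/2-\sigma-it}}$, so that $|h'(t)| \le 2q\alpha\, h(t) |Y(t)|$. By the fundamental theorem of calculus and averaging the base point over $[T,2T]$,
\[ \sup_{t \in [T,2T]} h(t) \le \frac{1}{T} \int_T^{2T} h(t) \, dt + 2 q \alpha \int_T^{2T} h(t) |Y(t)| \, dt . \]
Taking expectations, the task reduces to bounding $\mathbb{E}[h(t)]$ and, via Cauchy--Schwarz applied to $h|Y| = h^{1/2}\cdot h^{1/2}|Y|$, the quantity $\mathbb{E}[h(t)|Y(t)|^2]$, both uniformly in $t$.

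For the first I would apply Lemma \ref{GMomentBounds} with $b = 2q\alpha$, $c = 0$, $y = 3/2$ and $z = x^{e^{-k}}$ (one checks $\sigma \ge -\frac{2(k+1)}{\log x} \ge -\frac{10}{\log z}$ since $k+1 \le e^k$, and that $z \to \infty$ because $k \le K$), and use that $F_k(\frac{1}{2}+\sigma+it) = G_{3/2,z,\sigma}(t)$ together with $\sum_{p \le z} p^{-1-2\sigma} \le \log\log z + O(1)$ from the proof of that lemma, obtaining $\mathbb{E}[h(t)] \ll (\log z)^{(q\alpha)^2}$, independently of $t$. For the second I would expand $|Y(t)|^2 = \sum_{p,p'} L_p \overline{L_{p'}}$ with $L_p := \log p \cdot \frac{f(p) p^{-1/2-\sigma-it}}{1 - f(p) p^{-1/2-\sigma-it}}$ and use independence of $f$ at distinct primes to factor each $\mathbb{E}[ h(t) L_p \overline{L_{p'}} ]$ as a product over the remaining primes times one or two single-prime moments; with the elementary estimates $\mathbb{E}[\, |1 - f(p) p^{-1/2-\sigma-it}|^{-2q\alpha} |L_p|^2 ] \ll \frac{\log^2 p}{p}$, $|\mathbb{E}[\, |1 - f(p) p^{-1/2-\sigma-it}|^{-2q\alpha} L_p ]| \ll \frac{\log p}{p}$, $\mathbb{E}[\, |1 - f(p) p^{-1/2-\sigma-it}|^{-2q\alpha} ] \gg 1$, the bulk product contributing $\ll (\log z)^{(q\alpha)^2}$, and the Mertens-type bounds $\sum_{p \le z} \frac{\log^2 p}{p} \ll (\log z)^2$, $\sum_{p \le z} \frac{\log p}{p} \ll \log z$, this yields $\mathbb{E}[h(t)|Y(t)|^2] \ll (\log z)^{(q\alpha)^2+2}$ and hence $\mathbb{E}[h(t)|Y(t)|] \ll (\log z)^{(q\alpha)^2+1}$. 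Plugging these into the displayed inequality, the second term is $\ll T (\log z)^{(q\alpha)^2+1} = (\log z)^{(q\alpha)^2} \cdot T \log z \le (\log z)^{(q\alpha)^2}$, since $T \log z = T e^{-k} \log x \le 1$ by the hypothesis $T \le e^k/\log x$; together with the first term this gives $\mathbb{E}[\sup_{t \in [T,2T]} |G_k(t)|^{2q\alpha}] \ll e^{-k(q\alpha)^2}(\log x)^{(q\alpha)^2}$, and the proposition follows.

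The main obstacle is precisely that one cannot justify ``$G_k$ is roughly constant on $[T,2T]$'' by a deterministic bound on the variation of $\log|G_k|$: the relevant sum $\sum_{p \le z} p^{-1/2}$ diverges like $z^{1/2}/\log z$, so the worst-case change of $|G_k|$ across the interval is enormous. What saves the argument is that the variation is small only in mean square — this is encoded in the bound $\mathbb{E}[|Y(t)|^2] \ll (\log z)^2$, which multiplied by $T^2 \le e^{2k}/(\log x)^2$ is bounded — and the delicate point is to carry this through in the presence of the log-normal-type weight $h = |G_k|^{2q\alpha}$, i.e. the Cauchy--Schwarz step and the per-prime bookkeeping for $\mathbb{E}[h|Y|^2]$ while keeping careful track of the factor $(\log z)^{(q\alpha)^2}$ coming from the bulk Euler product.
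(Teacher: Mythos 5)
Your argument is correct, and it is genuinely different from the paper's. The paper dispatches Proposition \ref{SmallT} with a single clever application of H\"older's inequality: it writes
$\left(\int_T^{2T}|G_k(t)|^{2\alpha}\,dt\right)^q = |G_k(0)|^{2\alpha q(1-q)}\left(\int_T^{2T}|G_k(t)|^{2\alpha q}\big|G_k(t)/G_k(0)\big|^{2\alpha(1-q)}\,dt\right)^q$,
applies H\"older with exponents $\tfrac{1}{1-q}$ and $\tfrac{1}{q}$, and reads off both resulting expectations from Lemma \ref{GMomentBounds} using $|t|\ll e^k/\log x\asymp 1/\log z$. You instead pass to the supremum via $\left(\int_T^{2T}|G_k|^{2\alpha}\right)^q\le T^q\sup_{[T,2T]}|G_k|^{2q\alpha}$, control the sup by the one-dimensional Sobolev bound $\sup h\le\tfrac{1}{T}\int h+\int|h'|$ with $|h'|\le 2q\alpha\,h|Y|$, $Y=-F_k'/F_k$, and then bound $\mathbb{E}[h]$ and $\mathbb{E}[h|Y|^2]$ directly by prime-by-prime independence (with Cauchy--Schwarz to get $\mathbb{E}[h|Y|]$), using $T\log z\le 1$ at the end to absorb the extra $\log z$ from the derivative term. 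Both are complete proofs; your computations check out, in particular the uniform bounds $|u_p|=p^{-1/2-\sigma}\le 2^{-1/2+o(1)}<1$ (so $|1-u_p|$ is bounded away from $0$) which justify expanding the single-prime moments. The paper's route is shorter and is designed to extend cleanly to Propositions \ref{MediumT} and \ref{LargeT} via the same weight-redistribution device, whereas yours is more elementary, requires more bookkeeping for $\mathbb{E}[h|Y|^2]$, and gives the slightly stronger statement that $T^q\,\mathbb{E}\big[\sup_{[T,2T]}|G_k|^{2q\alpha}\big]\ll e^{-k(q\alpha)^2}T^q(\log x)^{(q\alpha)^2}$ (not merely the moment of the integral). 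One small point to be explicit about: in the off-diagonal factorisation $\prod_{p''\neq p,p'}\mathbb{E}[|1-u_{p''}|^{-2q\alpha}]\ll(\log z)^{(q\alpha)^2}$, you should note that each omitted single-prime factor is bounded below by a positive absolute constant (e.g.\ $(7/4)^{-2q\alpha}$), so dropping two of them only changes the full Euler-product bound by $O(1)$.
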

	
	\begin{proof}
		We would like to deduce the claim from H\"{o}lder's inequality, which is particularly effective when we apply it to parts that give roughly equal contributions. Since we expect the Euler product not to change on this scale of $T$, the idea is to throw in appropriate powers of $|G_k(0)|$ (a quantity whose moments we understand very well) in order to make both parts contribute equally. Namely, we have
		\begin{align*}
		\mathbb{E} \left[ \left( \int_T^{2 T} \left| G_k(t) \right|^{2 \alpha} \, dt\right)^q \right]
		&= \mathbb{E} \left[ |G_k(0)|^{2 \alpha q (1-q)} \left( \int_T^{2 T} \left| G_k(t) \right|^{2 \alpha q} \left| \frac{G_k(t)}{G_k(0)} \right|^{2 \alpha (1-q)} \, dt\right)^q \right] \\
		&\ll \mathbb{E} \left[ |G_k(0)|^{2 \alpha q} \right]^{1-q} \mathbb{E} \left[ \int_T^{2 T} \left| G_k(t) \right|^{2 \alpha q} \left| \frac{G_k(t)}{G_k(0)} \right|^{2 \alpha (1-q)} \, dt \right]^q.
		\end{align*}
		Now Lemma \ref{GMomentBounds} gives that
		\[ \mathbb{E} \left[ |G_k(0)|^{2 \alpha q} \right] \ll (e^{-k} \log x)^{(q \alpha)^2}. \]
		Regarding the second expectation, we can simply interchange it with the integral, and the claim follows since the same Lemmas give that
		\[ \mathbb{E} \bigg[ \left| G_k(t) \right|^{2 \alpha q} \left| \frac{G_k(t)}{G_k(0)} \right|^{2 \alpha (1-q)} \bigg] \ll (e^{-k} \log x)^{(q \alpha)^2}, \]
		using that $t \ll \frac{e^k}{\log x}$.
	\end{proof}
	
	\begin{proposition}\label{MediumT}
		Let $\alpha \ge 1$ and $K = [ \log \log \log x]$. Suppose that $0 < q < \frac{1}{\alpha}$ and $0 \le k \le K$. Assume further that $\frac{e^k}{\log x} \le T \le 1$ and that $\sigma \ge - \frac{2 (k+1)}{\log x}$. Then we have
		\[ \mathbb{E} \left[ \left( \int_T^{2 T} \left| G_k(t) \right|^{2 \alpha} \, dt \right)^q \right] \ll 
		e^{-(2 \alpha - 1) k q} T^{2 \alpha q - (q \alpha)^2} (\log x)^{2 \alpha q - q}. \]
	\end{proposition}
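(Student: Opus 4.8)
The plan is to peel off the ``small'' primes $p \le P := \exp(1/T)$, over which the Euler factors of $F_k$ are essentially constant on $[T,2T]$ (there $T\log p \le 1$), from the ``large'' primes $P < p \le x^{e^{-k}}$, which fluctuate on the scale $\delta := e^k/\log x = 1/\log(x^{e^{-k}})$. The hypothesis $T \ge e^k/\log x$ ensures $P \le x^{e^{-k}}$, and $N := T/\delta = e^{-k}T\log x \ge 1$ is, morally, the number of independent samples of the large--prime Euler product seen inside $[T,2T]$. Write $G_k(t) = G_k^s(t)\,G_k^l(t)$ with $G_k^s(t) := \prod_{p \le P}(1-f(p)p^{-1/2-\sigma-it})^{-1}$ and $G_k^l(t) := \prod_{P < p \le x^{e^{-k}}}(1-f(p)p^{-1/2-\sigma-it})^{-1}$; these are independent. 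Bounding $|G_k^s(t)|^{2\alpha}$ inside the integral by $\big(\sup_{[T,2T]}|G_k^s|\big)^{2\alpha}$ and using independence,
\[ \mathbb{E}\Big[\Big(\int_T^{2T}|G_k(t)|^{2\alpha}\,dt\Big)^q\Big] \;\le\; \mathbb{E}\Big[\big(\sup_{[T,2T]}|G_k^s|\big)^{2\alpha q}\Big]\,\mathbb{E}\Big[\Big(\int_T^{2T}|G_k^l(t)|^{2\alpha}\,dt\Big)^q\Big]. \]
I would show the first factor is $\ll (\log P)^{(q\alpha)^2} = T^{-(q\alpha)^2}$ and the second is $\ll \delta^q N^{2\alpha q}$; their product is $T^{2\alpha q-(q\alpha)^2}\delta^{q-2\alpha q}$, and substituting $\delta = e^k/\log x$ turns this into the asserted bound.

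For the small--prime factor, Lemma~\ref{GMomentBounds} (with $c=0$ and $|t| \le 2T \ll 1/\log P$) gives $\mathbb{E}[|G_k^s(t)|^{2\alpha q}] \ll (\log P)^{(q\alpha)^2}$ uniformly for $t \in [T,2T]$. To pass to the supremum I would use the elementary inequality $\sup_I |g|^r \le |I|^{-1}\int_I |g|^r + r\int_I |g|^{r-1}|g'|$, valid for every $r>0$, with $r = 2\alpha q$ and $g(t) = |G_k^s(\tfrac12+\sigma+it)|$ on $I = [T,2T]$; taking expectations and writing $|(G_k^s)'| = |G_k^s|\,|D^s|$ with $D^s(t) := \sum_{p\le P}\frac{f(p)(\log p)\,p^{-1/2-\sigma-it}}{1-f(p)p^{-1/2-\sigma-it}}$, one is left with a weighted moment bound $\mathbb{E}[|G_k^s(t)|^{2\alpha q}|D^s(t)|] \ll (\log P)^{(q\alpha)^2+1}$ (a routine variant of Lemma~\ref{GMomentBounds}, the extra $\log P$ coming from $\mathbb{E}|D^s|^2 \asymp (\log P)^2$). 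Since $|I| = T = 1/\log P$, both terms are $\ll (\log P)^{(q\alpha)^2}$.

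The substance of the proof is the large--prime factor. Fix a large constant $C$, set $V := C N$, and decompose according to the size of $M := \sup_{[T,2T]}|G_k^l|$: let $\mathcal{E}_0 := \{M \le V\}$ and $\mathcal{E}_j := \{2^{j-1}V < M \le 2^j V\}$ for $j \ge 1$. On $\mathcal{E}_j$ we have $|G_k^l(t)| \le 2^j V$ for \emph{all} $t \in [T,2T]$, so $\mathbf{1}_{\mathcal{E}_j}\int_T^{2T}|G_k^l(t)|^{2\alpha}\,dt \le \int_T^{2T}|G_k^l(t)|^{2\alpha}\mathbf{1}_{|G_k^l(t)|\le 2^j V}\,dt$. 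Applying H\"older's inequality with the indicator (exponents $\tfrac{1}{1-q}$ and $\tfrac1q$) and Fubini's theorem, the $\mathcal{E}_j$--contribution is at most $\mathbb{P}(\mathcal{E}_j)^{1-q}\big(\int_T^{2T}\mathbb{E}[\,|G_k^l(t)|^{2\alpha}\mathbf{1}_{|G_k^l(t)|\le 2^j V}\,]\,dt\big)^{q}$. The decisive point is that the \emph{truncated} moment obeys $\mathbb{E}[\,|G_k^l(t)|^{2\alpha}\mathbf{1}_{|G_k^l(t)|\le L}\,] \le L^{2\alpha-2}\,\mathbb{E}[|G_k^l(t)|^{2}] \ll L^{2\alpha-2}N$ (by Lemma~\ref{GMomentBounds} with $b=2$, using $\alpha\ge1$), which for $L\asymp N$ is of size $N^{2\alpha-1}$, far below the full moment $\mathbb{E}|G_k^l|^{2\alpha}\asymp N^{\alpha^2}$. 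On the other hand, a second--moment Chebyshev bound $\mathbb{P}(|G_k^l(t)|>u)\ll N/u^2$ together with a union bound over the $\asymp N$ coherence--length sub-intervals of $[T,2T]$ (a maximal--function estimate of the type above comparing $\sup$ over each sub-interval with the value at its centre) gives $\mathbb{P}(\mathcal{E}_j)\ll 4^{-j}$. Hence, with $L = 2^j V$,
\[ \mathbb{E}\big[\mathbf{1}_{\mathcal{E}_j}\big(\textstyle\int_T^{2T}|G_k^l|^{2\alpha}\big)^q\big] \;\ll\; (4^{-j})^{1-q}\big(T\cdot (2^jN)^{2\alpha-2}N\big)^{q} \;\ll\; 2^{(2\alpha q-2)j}\,\delta^{q}N^{2\alpha q}, \]
and since $2\alpha q < 2$ is exactly the hypothesis $q<\tfrac1\alpha$, the geometric series over $j\ge0$ converges, giving $\mathbb{E}[(\int_T^{2T}|G_k^l|^{2\alpha})^q]\ll \delta^{q}N^{2\alpha q}$, which finishes the proof.

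The main obstacle is this third step, and its crucial subtlety is that one must keep the cutoff at the level of the integrand rather than merely bound the integral on $\mathcal{E}_j$ by $(2^jV)^{2\alpha}$ times the length: the latter would lose a full factor of $N$ in the $q$-th moment, whereas the former lets one exploit the genuinely small truncated moments $\asymp L^{2\alpha-2}N$ of $|G_k^l|^{2\alpha}$, and the convergence of the dyadic sum is precisely what pins down $q<1/\alpha$. A secondary, purely technical nuisance running through the whole argument is the bookkeeping required to move between the continuous integral, its discretisation at spacing $\delta$, and suprema over intervals of length $\asymp\delta$; this is handled uniformly by the Sobolev--type inequality above together with the moment estimates for random Euler products and their logarithmic derivatives that underlie Lemma~\ref{GMomentBounds}.
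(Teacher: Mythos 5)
Your argument is correct in outline and reaches the stated bound, and at the structural level it matches the paper's proof: the same split of $G_k$ at $\exp(1/T)$ into small and large primes, the same truncation of the large-prime product at level $e^{-k}T\log x$, the same reliance on the exponent $2$ (so that $\mathbb{E}[|G_k^l|^2]\ll e^{-k}T\log x$ captures the large deviations), the same H\"older step with exponents $\tfrac{1}{1-q},\tfrac1q$ against an indicator together with Chebyshev, and the same dyadic decomposition of the exceptional event, whose geometric series has ratio $2^{2(\alpha q-1)}$ and converges exactly when $q<\tfrac1\alpha$. Where you genuinely differ is in how the near-constancy of the Euler products on scale $e^k/\log x$ (respectively on all of $[T,2T]$ for the small primes) is exploited. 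The paper never takes suprema or derivatives: it discretises, compares $G^s(t)$ with $G^s(0)$ and $G_k^l(t)$ with $G_k^l(t_{\mathrm{app}})$ through two-point moments $\mathbb{E}[|G(t)|^b|G(0)|^c]$ with one negative exponent (Lemma \ref{GMomentBounds}, e.g.\ $b=2\alpha$, $c=2-2\alpha$), and balances the small-prime contribution by inserting powers of $|G^s(0)|$ before applying H\"older, exactly as in Proposition \ref{SmallT}. You instead control genuine suprema via the Sobolev-type inequality plus hybrid moments involving the logarithmic derivative, namely $\mathbb{E}[|G^s|^{2\alpha q}|D^s|]$ and (implicitly, to get $\mathbb{E}[\sup_{I_n}|G_k^l|^2]\ll e^{-k}T\log x$ and hence $\mathbb{P}(\mathcal{E}_j)\ll 4^{-j}$) $\mathbb{E}[|G_k^l|^2|D^l|^2]$. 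These estimates are true and standard, but they are not consequences of Lemma \ref{GMomentBounds} as stated, and they must be proved with the strength you assert: the naive Cauchy--Schwarz bound $\mathbb{E}[|G^s|^{2\alpha q}|D^s|]\le\mathbb{E}[|G^s|^{4\alpha q}]^{1/2}\mathbb{E}[|D^s|^2]^{1/2}\ll(\log P)^{2(q\alpha)^2+1}$ is not good enough, and one needs instead the two-step bound $\mathbb{E}[|G^s|^{2\alpha q}|D^s|]\le\mathbb{E}[|G^s|^{2\alpha q}]^{1/2}\mathbb{E}[|G^s|^{2\alpha q}|D^s|^2]^{1/2}$ together with a diagonal/off-diagonal expansion over prime pairs showing $\mathbb{E}[|G^s|^{2\alpha q}|D^s|^2]\ll(\log P)^{(q\alpha)^2+2}$, and the analogous computation for the large primes. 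With those supplied, your proof goes through and yields the same exponents; the paper's route via $t_{\mathrm{app}}$ has the advantage that everything reduces to the one stated Euler-product lemma, while yours trades that for maximal inequalities plus derivative moments, at no loss in the final bound.
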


	We remark that we expect this to be the point where we lose some powers of $\log \log x$ in the upper bound - for $1 < \alpha \le 2$ and $0 < q < \frac{2(\alpha-1)}{\alpha^2}$, where the main contribution comes from $T \asymp 1$. Namely, by means of treating different intervals of length $\frac{e^k}{\log x}$ essentially as unrelated with a union bound, we disregard the fact that the not too large primes inside the range $\exp(1/T) < p \le x^{e^{-k}}$ exhibit significant correlations over intervals much longer than $\frac{e^k}{\log x}$. To account for this properly, one would have to subdivide this range of $p$ into yet smaller intervals as is done in \cite[Section 4.1]{Harper1}, but the arising expressions seem rather difficult to control with sufficient precision for general values of $\alpha$.
	
	\begin{proof}
		Write
		\[ G^s(t) := \prod_{p \le \exp(1/T)} \left(1 - \frac{f(p)}{p^{\frac{1}{2} + \sigma + i t}} \right)^{-1} \quad \text{ and } \quad  G_k^l(t) := \prod_{\exp(1/T) < p \le x^{e^{-k}}} \left(1 - \frac{f(p)}{p^{\frac{1}{2} + \sigma + i t}} \right)^{-1}, \]
		so that $G_k(t) = G^s(t) G_k^l(t)$. Note that the first product is non-empty only because $T \ll 1$, and the second product because $T \gg \frac{e^k}{\log x}$.
		Dividing up the range of integration into intervals of length $\asymp \frac{e^k}{\log x}$, we obtain that
		\begin{equation}\label{Subdivision} \mathbb{E} \left[ \left( \int_T^{2 T} \left| G_k(t) \right|^{2 \alpha} \, dt \right)^q \right] \le \mathbb{E} \left[ \left( \sum_{e^{-k} T \log x < n \le 2 e^{-k} T \log x} \int_{|t| \le \frac{e^k}{2 \log x}} \left| G_k \left(\frac{e^k n}{\log x} + t \right) \right|^{2 \alpha} \, dt \right)^q \right], \end{equation}
		where the left and right boundary terms are to be interpreted in such a way that the range of integration on both sides coincides. 
		
		For $n \in \mathbb{Z}$, define the event
		\[ E_n = \left\{\left|G_k^l \left(\frac{e^k n}{\log x} \right) \right| \le e^{-k} T \log x \right\}, \]
		which states that the Euler product over the large primes is not exceedingly big at certain discretised points. We can then insert $\mathbbm{1}(E_n) + \mathbbm{1}(\neg \, E_n)$ in front of the integrals in (\ref{Subdivision}) and use a union bound as well as translation-invariance in law. Denoting for given $t$ by $t_{\mathrm{app}}$ a point of the form $\frac{e^k n}{\log x}$ for $n \in \mathbb{N}$ with minimal distance to $t$, we see that the right-hand side of (\ref{Subdivision}) is
		\begin{align} 
		&= \mathbb{E} \left[ \left( \sum_{e^{-k} T \log x < n \le 2 e^{-k} T \log x} \left(\mathbbm{1}(E_n) + \mathbbm{1}(\neg \, E_n) \right) \int_{|t| \le \frac{e^k}{2 \log x}} \left| G_k \left(\frac{e^k n}{\log x} + t \right) \right|^{2 \alpha} \, dt \right)^q \right] \nonumber \\
		&\le \mathbb{E} \left[ \left( \int_T^{2 T} \mathbbm{1}\left( \left|G_k^l (t_{\mathrm{app}}) \right| \le e^{-k} T \log x \right) \left| G_k(t) \right|^{2 \alpha} \, dt\right)^q \right] \nonumber \\
		&+ \sum_{e^{-k} T \log x < n \le 2 e^{-k} T \log x} \mathbb{E} \left[ \left(  \mathbbm{1}(\neg \, E_n) \int_{|t| \le \frac{e^k}{2 \log x}} \left|G_k \left(\frac{e^k n}{\log x} + t \right) \right|^{2 \alpha} \, dt \right)^q \right] \nonumber \\
		&\ll \mathbb{E} \left[ \left( \int_T^{2 T} \mathbbm{1}\left( \left|G_k^l (t_{\mathrm{app}}) \right| \le e^{-k} T \log x \right) \left| G_k(t) \right|^{2 \alpha} \, dt\right)^q \right] \nonumber \\ &+ e^{-k} T \log x \, \mathbb{E} \left[ \left(  \mathbbm{1}(\neg \, E_0) \int_{|t| \le \frac{e^k}{2 \log x}} \left| G_k(t) \right|^{2 \alpha} \, dt \right)^q \right]. \label{EventInt} \end{align}
		
		In order to bound the first term, we first use the event to pull out most of $G_k^l$ and then apply H\"{o}lder's inequality after redistributing exponents. The way we apply the inequality might not be obvious on first glance, so we try to explain the rationale behind it.
		
		Regarding the small primes, we proceed in a rather similar fashion as in the proof of Proposition \ref{SmallT}, trying to arrange them in order to give equal contributions so that H\"{o}lder is effective. Regarding the large primes, we use the nature of our event to pull out most powers of $|G_k^l(t)|$, since we expect it to be very close to $G_k^l(t_{\mathrm{app}})$. But if we would pull out everything, we would lose powers of $\log x$ by not exploiting the fact that $G_k^l$ is large only on a very short interval. Thus, we leave in the expression $|G_k^l(t_{\mathrm{app}})|^2$ in order to use this, noting that the exponent $2$ is very convenient because it is particularly good in capturing large deviations; the reader might want to compare this observation e.g. to the introduction of \cite{Harper6}, which mentions the significance of this particular exponent (for $\zeta$ on a random interval, but $F$ behaves in a similar way). 
		
		We have
		
		\begin{align*}
		&\quad \; \mathbb{E} \left[ \left( \int_T^{2 T} \mathbbm{1}\left( \left|G_k^l (t_{\mathrm{app}}) \right| \le e^{-k} T \log x \right) \left| G_k(t) \right|^{2 \alpha} \, dt\right)^q \right] \\ 
		&= \mathbb{E} \left[ \left( \int_T^{2 T} \mathbbm{1}\left( \left|G_k^l (t_{\mathrm{app}}) \right| \le e^{-k} T \log x \right) \left| G^s(t) \right|^{2 \alpha} \left| G_k^l(t_{\mathrm{app}}) \right|^{2 \alpha} \left| \frac{G_k^l(t)}{G_k^l(t_{\mathrm{app}})} \right|^{2 \alpha} \, dt\right)^q \right] \\
		&\le (e^{-k} T \log x)^{2 (\alpha - 1) q} \mathbb{E} \left[ \left( \int_T^{2 T} \left| G^s(t) \right|^{2 \alpha} \left| G_k^l(t_{\mathrm{app}}) \right|^2 \left| \frac{G_k^l(t)}{G_k^l(t_{\mathrm{app}})} \right|^{2 \alpha} \, dt\right)^q \right] \\
		&= (e^{-k} T \log x)^{2 (\alpha - 1) q} \mathbb{E} \left[ |G^s(0)|^{2 \alpha q (1-q)} \left( \int_T^{2 T} \left| G^s(t) \right|^{2 \alpha q} \left| \frac{G^s(t)}{G^s(0)} \right|^{2 \alpha (1-q)} \left| G_k^l(t_{\mathrm{app}}) \right|^2 \left| \frac{G_k^l(t)}{G_k^l(t_{\mathrm{app}})} \right|^{2 \alpha} \, dt\right)^q \right]  \\
		&\ll (e^{-k} T \log x)^{2 (\alpha - 1) q} \mathbb{E} \left[ |G^s(0)|^{2 \alpha q} \right]^{1-q} \mathbb{E} \left[\int_T^{2 T} \left| G^s(t) \right|^{2 \alpha q} \left| \frac{G^s(t)}{G^s(0)} \right|^{2 \alpha (1-q)} \left| G_k^l(t_{\mathrm{app}}) \right|^2 \left| \frac{G_k^l(t)}{G_k^l(t_{\mathrm{app}})} \right|^{2 \alpha} \, dt\right]^q.
		\end{align*}
		By Lemma \ref{GMomentBounds}, we have
		\[ \mathbb{E} \left[ |G^s(0)|^{2 \alpha q} \right] \ll T^{-(q \alpha)^2}. \]
		To deal with the second expectation, we can first interchange it with the integral, and then factor the small prime and the large prime part into two expectations using independence. Again, Lemma \ref{GMomentBounds} implies
		\[ \mathbb{E} \left[ \left| G^s(t) \right|^{2 \alpha q} \left| \frac{G^s(t)}{G^s(0)} \right|^{2 \alpha (1-q)} \right] \ll T^{-(q \alpha)^2} \]
		using that $t \ll T$, and also
		\[ \mathbb{E} \left[ \left| G_k^l(t_{\mathrm{app}}) \right|^2 \left| \frac{G_k^l(t)}{G_k^l(t_{\mathrm{app}})} \right|^{2 \alpha} \right] \ll e^{-k} T \log x \]
		using translation-invariance in law and the fact that $|t - t_{\mathrm{app}}| \ll \frac{e^k}{\log x}$. Putting these together implies the required bound for the first part in (\ref{EventInt}).
		
		To deal with the second summand in (\ref{EventInt}), we first split the event $\neg E_0$ into smaller events. Let
		\[ E_{>r} := \left\{|G_k^l(0)| \in [e^r e^{-k} T \log x, e^{r+1} e^{-k} T \log x] \right\}. \]
		A similar application of H\"{o}lder's inequality as used for the first term yields
		\begin{align*}
		&\quad \mathbb{E} \left[ \left(  \mathbbm{1}(E_{>r}) \int_{|t| \le \frac{e^k}{2 \log x}} \left| G_k(t) \right|^{2 \alpha} \, dt \right)^q \right] \\ &\ll (e^r e^{-k} T \log x)^{2 \alpha q} \mathbb{E} \left[ \mathbbm{1}(E_{>r}) \left( \int_{|t| \le \frac{e^k}{2 \log x}} |G^s(t)|^{2 \alpha} \left| \frac{G_k^l(t)}{G_k^l(0)} \right|^{2 \alpha} \, dt \right)^q \right] \\
		&= (e^r e^{-k} T \log x)^{2 \alpha q} \mathbb{E} \left[ \left( |G^s(0)|^{2 \alpha q} \mathbbm{1}(E_{>r}) \right)^{1-q} \left( \mathbbm{1}(E_{>r}) \int_{|t| \le \frac{e^k}{2 \log x}} |G^s(t)|^{2 \alpha q} \left| \frac{G^s(t)}{G^s(0)} \right|^{2 \alpha (1-q)} \left| \frac{G_k^l(t)}{G_k^l(0)} \right|^{2 \alpha} \, dt \right)^q \right] \\
		&\ll (e^r e^{-k} T \log x)^{2 \alpha q} \mathbb{E} \left[ |G^s(0)|^{2 \alpha q} \right]^{1-q} \mathbb{P}[E_{>r}]^{1-q} \mathbb{E}\left[ \mathbbm{1}(E_{>r}) \int_{|t| \le \frac{e^k}{2 \log x}} |G^s(t)|^{2 \alpha q} \left| \frac{G^s(t)}{G^s(0)} \right|^{2 \alpha (1-q)} \left| \frac{G_k^l(t)}{G_k^l(0)} \right|^{2 \alpha} \, dt \right]^q
		\\ &\ll (e^r e^{-k} T \log x)^{2 \alpha q} T^{-(q \alpha)^2} \mathbb{P}[E_{> r}]^{1-q} \left( \int_{|t| \le \frac{e^k}{2 \log x}} \mathbb{E} \left[ \mathbbm{1}(E_{>r}) \left| \frac{G_k^l(t)}{G_k^l(0)} \right|^{2 \alpha} \right] \, dt \right)^q.
		\end{align*}
		But Chebyshev's inequality implies that
		\[ \mathbb{P}[E_{>r}] \le \frac{\mathbb{E} [ |G_k^l(0)|^2 ]}{(e^r e^{-k} T \log x)^2} \ll \frac{e^{-2 r} e^k}{T \log x}  \]
		and similarly, using Lemma \ref{GMomentBounds} and noting that $\mathbbm{1}(E_{>r}) \le \frac{|G_k^l(0)|^2}{(e^r e^{-k} T \log x)^2}$,
		\[ \mathbb{E} \left[ \mathbbm{1}(E_{>r}) \left| \frac{G_k^l(t)}{G_k^l(0)} \right|^{2 \alpha} \right] \le \frac{\mathbb{E} \left[ |G_k^l(0)|^{2-2\alpha} |G_k^l(t)|^{2 \alpha} \right]}{(e^r e^{-k} T \log x)^2} \ll \frac{e^{-2r} e^k}{T \log x} \]
		for $|t| \ll \frac{e^k}{\log x}$.
		Hence, we obtain that
		\begin{align*}
		&\quad e^{-k} T \log x \, \mathbb{E} \left[ \left(  \mathbbm{1}(\neg \, E_0) \int_{|t| \le \frac{e^k}{2 \log x}} \left| G_k (t) \right|^{2 \alpha} \, dt \right)^q \right]
		\ll e^{-(2 \alpha - 1) k q} T^{2 \alpha q -(q \alpha)^2} (\log x)^{2 \alpha q - q} \sum_{r \ge 0} e^{2 (\alpha q - 1) r},
		\end{align*}
		and the latter sum converges for $q < \frac{1}{\alpha}$, thus giving the claim.
	\end{proof}
	
	\begin{proposition}\label{LargeT}
		Let $\alpha \ge 1$ and $K = [ \log \log \log x]$. Suppose that $0 < q < \frac{1}{\alpha}$ and $0 \le k \le K$. Assume further that $T \ge 1$ and that $\sigma \ge - \frac{2 (k+1)}{\log x}$. Then we have
		\[ \mathbb{E} \left[ \left( \int_T^{2 T} \left| G_k(t) \right|^{2 \alpha} \, dt \right)^q \right] \ll 
		e^{-(2 \alpha - 1) k q} T^{\alpha q} (\log x)^{2 \alpha q - q}. \]
	\end{proposition}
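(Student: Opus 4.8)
The plan is to follow the proof of Proposition \ref{MediumT} with two simplifications and one modification. Since $T \ge 1$, the ``small primes'' $p \le \exp(1/T) \le e$ are essentially absent — at worst $p = 2$, whose contribution is harmless and can be absorbed into the error terms of Lemma \ref{GMomentBounds} — so there is no need to split $G_k$ into a small-prime and a large-prime factor, and I would work with $G_k$ directly. As in Proposition \ref{MediumT} I would subdivide $[T, 2T]$ into $\asymp e^{-k} T \log x$ intervals of length $\asymp \frac{e^k}{\log x}$ (legitimate since $e^k \le e^K \ll \log x \le T \log x$), write $t_{\mathrm{app}}$ for the point $\frac{e^k n}{\log x}$ closest to $t$, and pass, via subadditivity of $s \mapsto s^q$ and translation-invariance in law, to the two contributions obtained by inserting $\mathbbm{1}(E_n) + \mathbbm{1}(\neg E_n)$ in front of each piece, where now the event carries a different cutoff than in Proposition \ref{MediumT}, namely
\[ E_n := \Big\{ \big| G_k\big( \tfrac{e^k n}{\log x} \big) \big| \le e^{-k} \sqrt{T} \log x \Big\}. \]
At $T = 1$ this cutoff, and the bound we are aiming for, agree with the $T = 1$ endpoint of Proposition \ref{MediumT}.

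On the event $\mathbbm{1}\big( |G_k(t_{\mathrm{app}})| \le e^{-k}\sqrt{T}\log x \big)$ I would factor $|G_k(t)|^{2\alpha} = |G_k(t_{\mathrm{app}})|^{2\alpha} |G_k(t)/G_k(t_{\mathrm{app}})|^{2\alpha}$, pull out $2(\alpha - 1)$ of the powers of $|G_k(t_{\mathrm{app}})|$ by the cutoff and keep the factor $|G_k(t_{\mathrm{app}})|^2$, the exponent $2$ again being the most efficient for capturing large deviations. Bounding the $q$-norm trivially by the $1$-norm, interchanging expectation and integral, and applying Lemma \ref{GMomentBounds} with $b = 2\alpha$, $c = 2 - 2\alpha$ (valid because $|t - t_{\mathrm{app}}| \log(x^{e^{-k}}) \ll 1$ on the scale of our subdivision) yields $\mathbb{E}\big[ |G_k(t_{\mathrm{app}})|^2 |G_k(t)/G_k(t_{\mathrm{app}})|^{2\alpha} \big] \ll e^{-k} \log x$, so that this contribution is
\[ \ll (e^{-k} \sqrt{T} \log x)^{2(\alpha - 1) q} \, (T e^{-k} \log x)^q = e^{-(2\alpha - 1) k q} \, T^{\alpha q} \, (\log x)^{2\alpha q - q}, \]
which is exactly the asserted bound, using $2\alpha q - q = (2\alpha - 1) q$.

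For the complementary event I would, exactly as in Proposition \ref{MediumT}, decompose $\neg E_0 = \bigsqcup_{r \ge 0} E_{>r}$ with $E_{>r} := \big\{ |G_k(0)| \in [e^r, e^{r+1}] \cdot e^{-k} \sqrt{T} \log x \big\}$, pull out $(e^r e^{-k} \sqrt{T} \log x)^{2\alpha q}$ on each $E_{>r}$, and apply H\"older's inequality with exponents $\frac{1}{1-q}, \frac{1}{q}$ to peel off $\mathbbm{1}(E_{>r})$. Chebyshev's inequality together with $\mathbb{E}[|G_k(0)|^2] \ll e^{-k}\log x$ gives $\mathbb{P}[E_{>r}] \ll e^{-2r} e^k / (T \log x)$, and the same second-moment trick $\mathbbm{1}(E_{>r}) \le |G_k(0)|^2 / (e^r e^{-k}\sqrt{T}\log x)^2$ combined with Lemma \ref{GMomentBounds} (with $b = 2\alpha$, $c = 2 - 2\alpha$) gives $\mathbb{E}\big[ \mathbbm{1}(E_{>r}) |G_k(t)/G_k(0)|^{2\alpha} \big] \ll e^{-2r} e^k / (T \log x)$ for $|t| \ll \frac{e^k}{\log x}$. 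Multiplying these out, integrating over $|t| \le \frac{e^k}{2\log x}$, and multiplying by the $\asymp e^{-k} T \log x$ intervals leaves $e^{-(2\alpha - 1) k q} T^{\alpha q} (\log x)^{2\alpha q - q} \sum_{r \ge 0} e^{2(\alpha q - 1) r}$, and the geometric series converges precisely because $q < \frac{1}{\alpha}$. The one genuinely delicate point — the part that is not mere bookkeeping — is pinning down the cutoff $e^{-k}\sqrt{T}\log x$ in the definition of $E_n$, since it is exactly this choice that makes the on-event and off-event contributions come out at the same powers of $e^k$, $T$ and $\log x$; once this is fixed, the argument is a faithful transcription of the proof of Proposition \ref{MediumT} with the small-prime factor removed and the three parameters tracked carefully through the H\"older steps.
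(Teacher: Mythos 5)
Your proof is correct and is precisely the argument the paper intends: the paper's own proof of Proposition \ref{LargeT} merely says to repeat the argument of Proposition \ref{MediumT} with $G^s = 1$, $G_k^l = G_k$, and the events $E_n = \{|G_k(e^k n/\log x)| \le e^{-k}T^{1/2}\log x\}$, leaving the details to the reader, and you have filled in those details faithfully (with the minor, legitimate simplification of replacing the H\"older step on the on-event term by a trivial $q$-norm-to-$1$-norm bound, which suffices once there is no small-prime factor to balance). Your bookkeeping of the powers of $e^k$, $T$ and $\log x$, your derivation of $\mathbb{E}[|G_k(t_{\mathrm{app}})|^2|G_k(t)/G_k(t_{\mathrm{app}})|^{2\alpha}] \ll e^{-k}\log x$ via Lemma \ref{GMomentBounds} with $b=2\alpha$, $c=2-2\alpha$, and the convergence condition $q < 1/\alpha$ for the geometric series all check out.
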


	Note that, since we get an extra factor $T^{-2 q}$ from the denominator of Proposition \ref{REPLast}, this result will allow us to show that for $1 \le \alpha \le 2$ the contribution of $T \ge 1$ is negligible. Employing the heuristic from the introduction, it however seems that this bound is typically does not even give the correct size on a scale of $\log x$. In particular, for $\alpha > 2$, the bound is increasing with $T$, and for these $\alpha$ and very large values of $T$ one can in fact obtain a better estimate with trivial bounds, as will be seen in section \ref{LowerBoundsFS}.
	
	\begin{proof}
		This follows by the same argument that we saw in Proposition \ref{MediumT}. The difference is that there is no need to split the Euler product, so $G_k^l = G_k$ (and $G^s = 1$). Also, the right events to define in this case are
		\[ E_n = \left\{ \left| G_k \left( \frac{e^k n}{\log x} \right) \right| \le e^{-k} T^{1/2} \log x \right\}, \]
		making the corresponding bounds under the event that each of these holds roughly the same as the complementary event. The details are left to the reader.
	\end{proof}

	\begin{proof}[Proof of Theorem 1.]
		By Proposition \ref{REPLast} and translation-invariance in law, we have
		\[ \Psi_{2q,\alpha}(x) \ll (\log x)^{(q \alpha)^2} + \sum_{0 \le k \le K} \sum_{\substack{T \ge \frac{2 (k+1)}{\log x} \\ T \text{ dyadic}} } \frac{1}{(\log x)^q T^{2 q}} \mathbb{E} \left[ \left( \int_T^{2 T} | G_k (t) |^{2 \alpha} \, dt \right)^q \right]. \]
		Plugging in Propositions \ref{SmallT}, \ref{MediumT} and \ref{LargeT} on the respective ranges then gives the claim. Note that the main contribution comes from $T \asymp \frac{k+1}{\log x}$ precisely when $\frac{2(\alpha - 1)}{\alpha ^2} < q \le \frac{1}{2}$, and from $T \asymp 1$ precisely when $0 < q < \frac{2(\alpha-1)}{\alpha^2}$. At $q = \frac{2(\alpha-1)}{\alpha^2}$, the whole range of $\frac{k+1}{\log x} \ll T \ll 1$ gives the same contribution, thus giving an extra factor $\log \log x$.
	\end{proof}

	\subsection{Lower bounds}\label{LowerBoundsFS}
	
	We will now complete the proof of Theorem \ref{LowerBoundThm} using Propositions \ref{LowerBoundProp} and \ref{LowerBoundPropLargeAlpha}. The arguments follow ideas from \cite[Section 6]{Harper1} and \cite[Section 6]{Harper3}.

	\begin{proof}[Proof of Theorem 2 for $1 \le \alpha \le 2$.]
		We will closely follow \cite[Section 6]{Harper1}. Applying the lower bound in Proposition \ref{LowerBoundProp}, we begin by upper-bounding the subtracted term in a rather trivial way. Namely, assuming that $V \ge 1$, H\"{o}lder's inequality and Lemma \ref{GMomentBounds} imply that
		\begin{align*}
		&\quad \; \frac{1}{e^{V q} (\log x)^q} \mathbb{E} \Bigg[ \bigg (\int_{\mathbb{R}} \frac{\left| F \left( \frac{1}{2} + \frac{V}{\log x} + i t \right) \right|^{2 \alpha}}{\left| \frac{V}{\log x} + i t \right|^2} \, dt \bigg)^q \Bigg] \\
		&\le e^{-V q} \sum_{ \substack{ T \ge \frac{V}{\log x} \\ T \text{ dyadic} } } \frac{1}{(\log x)^q T^{2 q}} \mathbb{E} \Bigg[ \bigg (\int_T^{2 T} \left| F \left( \frac{1}{2} + \frac{V}{\log x} + i t \right) \right|^{2 \alpha} \, dt \bigg)^q \Bigg] \\
		&\le e^{-V q} \sum_{ \substack{ T \ge \frac{V}{\log x} \\ T \text{ dyadic} } } \frac{1}{(\log x)^q T^{2 q}} \Bigg( \int_T^{2 T} \mathbb{E} \left[\, \left| F \left( \frac{1}{2} + \frac{V}{\log x} + i t \right) \right|^{2 \alpha} \, \right] \, dt \Bigg)^q \\
		&\ll e^{-V q} (\log x)^{q(\alpha^2 - 1)} \sum_{ \substack{ T \ge \frac{V}{\log x} \\ T \text{ dyadic} } } T^{-q} \ll e^{-V q} (\log x)^{q \alpha^2}.
		\end{align*}
		Taking say $V = (\alpha^2 - 2 \alpha + 3) \log \log x$, this is $\ll (\log x)^{2(\alpha - 1) q - q}$. Note that for $1 \le \alpha \le 2$ we could get much better bounds from Propositions \ref{SmallT}, \ref{MediumT} and \ref{LargeT}, which would allow us in the end to take $V \asymp \log \log \log x$ and thus give somewhat better lower bounds. Since we expect the arising lower bound to still be far away from the truth in terms of the exponent of $\log \log x$, we have refrained from doing so.
		
		It thus remains to obtain a lower bound for
		\[ \frac{1}{(\log x)^q} \left \Vert \int_{|t| < 1} \left| F \left( \frac{1}{2} + \frac{2 V}{\log x} + i t \right) \right|^{2 \alpha} \, dt \right \Vert_q^q \]
		with our choice of $V$ above. To this end, note that since $\exp$ is convex, Jensen's inequality implies that
		\begin{align*}
		&\quad \; \int_{|t| < 1} \left| F \left( \frac{1}{2} + \frac{2 V}{\log x} + i t \right) \right|^{2 \alpha} \, dt \\ &\ge \sum_{|n| \le \log x - 1} \int_{|t| < \frac{1}{2 \log x}} \left| F \left( \frac{1}{2} + \frac{2 V}{\log x} + i \frac{n}{\log x} + i t \right) \right|^{2 \alpha} \, dt \\
		&= \frac{1}{\log x} \sum_{|n| \le \log x - 1} \Bigg( \log x \int_{|t| < \frac{1}{2 \log x}} \exp \left( 2 \alpha \log \left| F \left( \frac{1}{2} + \frac{2 V}{\log x} + i \frac{n}{\log x} + i t \right) \right| \right) \, dt \Bigg)  \\
		&\ge \frac{1}{\log x} \sum_{|n| \le \log x - 1} \exp \bigg( 2 \alpha \log x \int_{|t| < \frac{1}{2 \log x}} \log \left| F \left( \frac{1}{2} + \frac{2 V}{\log x} + i \frac{n}{\log x} + i t \right) \right| \, dt \bigg).
		\end{align*}
		Next, we can compute
		\begin{align*}
			\log \left| F \left( \frac{1}{2} + \frac{2 V}{\log x} + i \frac{n}{\log x} + i t \right) \right| &= - \Re \left( \sum_{p \le x} \log \left( 1 - \frac{f(p)}{p^{\frac{1}{2} + \frac{2 V}{\log x} + i \frac{n}{\log x} + i t}} \right) \right) \\
			&= \sum_{p \le x} \Bigg( \frac{\Re( f(p) p^{- i \frac{n}{\log x} - i t })}{p^{\frac{1}{2} + \frac{2 V}{\log x}}} + \frac{\Re( f(p)^2 p^{- i \frac{ 2 n}{\log x} - 2 i t })}{2 p^{1 + \frac{4 V}{\log x}}} \Bigg) + O(1).
		\end{align*}
		Further, we remark that
		\[ \log x \int_{|t| < \frac{1}{2 \log x}} p^{-2 i t} \, dt = \log x \int_{|t| < \frac{1}{2 \log x}} \left( 1 + O(|t| \log p) \right) \, dt = 1 + O \left( \frac{\log p}{\log x} \right) \]
		and
		\[ \sum_{p \le x} \frac{\log p}{p^{1+\frac{4 V}{\log x}} \log x} = O(1), \]
		so that all together, bounding the sum over $n$ from below by the maximum, we have
		\begin{align*}
		&\quad \; \Psi_{2 q, \alpha}(x) \\ &\gg \frac{1}{(\log x)^{2 q}} \mathbb{E} \Bigg[ \Bigg( \max_{|n| \le \log x - 1} \exp \bigg( \log x  \sum_{p \le x} \int_{|t| < \frac{1}{2 \log x}}  \frac{\Re( f(p) p^{- i \frac{n}{\log x} - i t })}{p^{\frac{1}{2} + \frac{2 V}{\log x}}}  \, dt + \sum_{p \le x} \frac{\Re( f(p)^2 p^{- i \frac{ 2 n}{\log x}})}{2 p^{1 + \frac{4 V}{\log x}}} \bigg) \Bigg)^{2 \alpha q} \Bigg] \\
		&- O \left( (\log x)^{2(\alpha - 1) q - q} \right).
		\end{align*}
		But for any non-negative random variable $X$ and any fixed $y> 0$ we have $\mathbb{E}[X^q] \ge y^q \mathbb{P}[X \ge y].$
		Hence it suffices to show that for some constant $C > 0$ we have
		\begin{align*} \mathbb{P} \Bigg[ &\max_{|n| \le \log x - 1} \bigg( \log x  \sum_{p \le x} \int_{|t| < \frac{1}{2 \log x}}  \frac{\Re( f(p) p^{- i \frac{n}{\log x} - i t })}{p^{\frac{1}{2} + \frac{2 V}{\log x}}}  \, dt + \sum_{p \le x} \frac{\Re( f(p)^2 p^{- i \frac{ 2 n}{\log x}})}{2 p^{1 + \frac{4 V}{\log x}}} \bigg) \Bigg. \\ \Bigg. &\ge \log \log x - \frac{1}{2} \log \log \log  x - C \Bigg] \gg \frac{1}{(\log \log x)^6}.
		\end{align*}
		Now
		\[ \left| \sum_{p \le (\log x)^{10}} \frac{\Re( f(p)^2 p^{- i \frac{ 2 n}{\log x}})}{2 p^{1 + \frac{4 V}{\log x}}} \right| \le \sum_{p \le (\log x)^{10}} \frac{1}{2 p} \le \frac{1}{2} \log \log \log x + O(1) \]
		and
		\[ \mathbb{E} \bigg[ \,  \bigg| \sum_{ (\log x)^{10} < p \le x }\frac{\Re( f(p)^2 p^{- i \frac{ 2 n}{\log x}})}{2 p^{1 + \frac{4 V}{\log x}}}  \bigg|^2 \, \bigg] \le \sum_{ (\log x)^{10} < p \le x } \frac{1}{p^2} \ll (\log x)^{-10}, \]
		so that by union bound and Chebyshev's inequality we have
		\[ \mathbb{P} \bigg[ \max_{|n| \le \log x - 1} \bigg| \sum_{ (\log x)^{10} < p \le x} \frac{\Re( f(p)^2 p^{- i \frac{ 2 n}{\log x}})}{2 p^{1 + \frac{4 V}{\log x}}}\bigg| \ge 1 \bigg] \ll (\log x)^{-9}.  \]
		As a consequence, it suffices to show in turn that
		\[ \mathbb{P} \Bigg[ \max_{|n| \le \log x - 1} \bigg( \log x  \sum_{p \le x} \int_{|t| < \frac{1}{2 \log x}}  \frac{\Re( f(p) p^{- i \frac{n}{\log x} - i t })}{p^{\frac{1}{2} + \frac{2 V}{\log x}}}  \, dt \bigg) \ge \log \log x \Bigg] \gg \frac{1}{(\log \log x)^6}. \]
		Lastly, from independence and the fact that
		\[ \mathbb{P} \bigg[ \log x \sum_{p \le (\log x)^{10}} \int_{|t| < \frac{1}{2 \log x}}  \frac{\Re( f(p) p^{- i \frac{n}{\log x} - i t })}{p^{\frac{1}{2} + \frac{2 V}{\log x}}}  \, dt \ge 0 \bigg] = \frac{1}{2}, \]
		we can furthermore omit the primes $p \le (\log x)^{10}$. We thus arrive at \cite[(6.3)]{Harper1} (with $\beta = 2$ say), the only difference being that our denominator is $p^{\frac{1}{2} + \frac{2(\alpha^2 - 2 \alpha + 3) \log \log x}{\log x} }$ instead of $p^{\frac{1}{2} + \frac{\log \log x}{\log x} }$.
		The claim now follows from the proof there. The reader is invited to verify that the slightly bigger shift makes no difference for the final bound, and that the $(\log \log x)^{O(1)}$ in the denominator there is indeed $(\log \log x)^6$.
		
		The basic idea for proving this claim is to use a multivariate central limit theorem in order to replace for each $|n| \le \log x - 1$ the sum by a Gaussian random variable with the same mean and variance so that the covariance structure also remains. One then uses lower bound results on the maximum of Gaussian processes \cite[Theorem 1]{Harper3}. This will in fact be spelled out in more detail in the next proof, since for $\alpha > 2$ we need to modify some of the parameters to make the argument work. 
	\end{proof}

	\begin{proof}[Proof of Theorem 2 for $\alpha > 2$.]
		We closely imitate the previous proof, although in particular in the final part we have to work a bit more. 
		
		Firstly, Proposition \ref{LowerBoundPropLargeAlpha} and the same argument as above, this time taking say $V = \left(\frac{\alpha^2}{2}+ 1 \right) \log \log x$, implies that
		\begin{align*}
		\Psi_{2q, \alpha}(x) \gg (\log x)^{q-q \alpha^2/2} \left \Vert \int_{|t| < (\log x)^{\alpha^2/4-1}} \left| F \left( \frac{1}{2} + \frac{2 V}{\log x} + i t \right) \right|^{2 \alpha} \, dt \right \Vert_q^q - O \left( (\log x)^{q \alpha^2/2 - q} \right).
		\end{align*}
		From the exact same argument as in the last proof it also follows, again using Jensen's inequality, that it suffices this time to prove that for $\alpha > 2$ we have
		\begin{align*} \mathbb{P} \Bigg[ &\max_{0 \le m \le (\log x)^{\alpha^2/4-1}}  \max_{|n| \le \log x - 1} \bigg( \log x  \sum_{p \le x} \int_{|t| < \frac{1}{2 \log x}}  \frac{\Re( f(p) p^{- i m - i \frac{n}{\log x} - i t })}{p^{\frac{1}{2} + \frac{2 V}{\log x}}}  \, dt + \sum_{p \le x} \frac{\Re( f(p)^2 p^{- 2 i m - i \frac{ 2 n}{\log x}})}{2 p^{1 + \frac{4 V}{\log x}}} \bigg) \Bigg. \\ \Bigg. &\ge \frac{\alpha}{2} \log \log x - \frac{1}{2} \log \log \log  x - C \Bigg] \gg \frac{1}{(\log \log x)^{2 \alpha^2 / 3 + 5}}.
		\end{align*}
		We can also copy the previous argument to in turn reduce it to showing that
		\begin{align} &\mathbb{P} \Bigg[ \max_{0 \le m \le (\log x)^{\alpha^2/4-1}}  \max_{|n| \le \log x - 1} \bigg( \log x  \sum_{y < p \le x} \int_{|t| < \frac{1}{2 \log x}}  \frac{\Re( f(p) p^{- i m - i \frac{n}{\log x} - i t })}{p^{\frac{1}{2} + \frac{2 V}{\log x}}}  \, dt \bigg) \ge \frac{\alpha}{2} \log \log x \Bigg] \nonumber \\ &\gg \frac{1}{(\log \log x)^{2 \alpha^2 / 3 + 5}} \label{Indep}
		\end{align}
		for some parameter $2 \le y \le x$. We would like to choose $y = (\log x)^C$ for some appropriate constant $C$, but due to limitations coming from the error term in the prime number theorem, this will not be possible. Instead, we set
		\[ y = \exp \left( c (\log \log x)^{\frac{5}{3} + \varepsilon} \right) \]
		for some constants $c, \, \varepsilon > 0$ (both depending on $\alpha$), where $\varepsilon$ will be chosen later. Note that this implies that \[\log y \asymp (\log \log x)^{\frac{5}{3}+\varepsilon} \quad \text{ and } \quad \log \log y = \left(\frac{5}{3}+ \varepsilon \right) \log \log \log x + O(1).\] However, even if we could choose $y$ as small as a power of $\log x$, say under the Riemann Hypothesis this will turn out to be possible, the resulting bound will likely still be far from optimal in terms of the exponent of $\log \log x$, so we refrain from giving a conditional bound as well.
		
		We will prove (\ref{Indep}) by first proving that for any $0 \le m \le (\log x)^{\alpha^2/4-1}$ we have
		\begin{align} &\mathbb{P} \Bigg[ \max_{|n| \le \log x - 1} \bigg( \log x  \sum_{y < p \le x} \int_{|t| < \frac{1}{2 \log x}}  \frac{\Re( f(p) p^{- i m - i \frac{n}{\log x} - i t })}{p^{\frac{1}{2} + \frac{2 V}{\log x}}}  \, dt \bigg) \ge \frac{\alpha}{2} \log \log x \Bigg] \nonumber \\ &\gg \frac{1}{(\log x)^{\alpha^2/4-1}(\log \log x)^{2 \alpha^2 / 3 + 5}} \label{IntClaim}
		\end{align}
		and then, taking only every second interval to make sure that the correlations are small, asserting that for different $m$ the events are almost independent. 
		
		The proof of (\ref{IntClaim}) is in fact a rather direct adaptation of the proof of \cite[(6.3)]{Harper1}, one only needs to change some of the parameters. Namely, we can replace (for fixed $m$) the family of random variables
		\[ \bigg( \log x  \sum_{y < p \le x} \int_{|t| < \frac{1}{2 \log x}}  \frac{\Re( f(p) p^{- i m - i \frac{n}{\log x} - i t })}{p^{\frac{1}{2} + \frac{2 V}{\log x}}}  \, dt \bigg)_{|n| \le \log x - 1}  \]  
		by Gaussian random variables $(X(n))_{|n| \le \log x - 1}$ with the same means and covariances by the multivariate central limit theorem. Moreover, one computes (compare the proof of \cite[(6.3)]{Harper1}) that $\mathbb{E}X(n) = 0$ and
		\begin{equation} \mathrm{Cov}(X(n_1),X(n_2)) = \sum_{y < p \le x^{1/\log \log x}} \frac{\cos \left( \frac{|n_1-n_2| \log p}{\log x} \right)}{2 p} + O(1). \label{Cov} \end{equation}
		Hence, for $|n_1 - n_2| \le \log \log x$ we have
		\begin{align*} \mathrm{Cov}(X(n_1),X(n_2)) &= \sum_{y < p \le x^{1/\log \log x}} \frac{1 + O \left( \frac{|n_1 - n_2|^2 \log^2 p}{\log^2 x} \right)}{2 p} \\ &= \frac{1}{2} \left(\log \log x - \left( \frac{8}{3} + \varepsilon \right) \log \log \log x \right) + O(1). \end{align*}
		Next, we will use the following form of the prime number theorem (compare e.g. \cite[p. 194]{MontVaughan} and the references therein)
		\[ \pi(x) = \mathrm{li}(x) + O_\delta \left( x \exp \left( - (\log x)^{\frac{3}{5}-\delta} \right) \right) \]
		for $\delta > 0$. Following the argument in \cite[Section 6.1]{Harper3}, but using this stronger version of the prime number theorem, gives that
		\begin{equation}\label{PNT} \sum_{y < p \le z } \frac{\cos ( t \log p)}{p} = \int_{t \log y}^{t \log z} \frac{\cos u}{u} \, du + O_\delta \left((1+|t|) \exp\left(- (\log y)^{\frac{3}{5}-\delta}\right) \right). \end{equation}
		Plugging this into (\ref{Cov}) with our choice of $y$ and with $z = x^{1/\log \log x}$ implies that for $\log \log x \le |n_1 - n_2| \le \frac{\log x}{\log y}$ we have
		\[ \mathrm{Cov}(X(n_1),X(n_2)) = \frac{1}{2} \left( \log \log x - \log |n_1 - n_2| - \left( \frac{5}{3} + \varepsilon \right) \log \log \log x \right) + O(1).  \]
		Putting our estimates so far together, again in a similar way as in the proof of \cite[(6.3)]{Harper1}, implies that for any large parameter $E \in \mathbb{N}$ we have
		\[ \mathbb{P} \left[ \max_{|n| \le \log x - 1} X(n) \ge \frac{\alpha}{2} \log \log x \right] \ge \mathbb{P} \left[ \max_{|j| \ll \frac{\log x}{E (\log \log x)^{8/3+\varepsilon}}} \frac{X(j E [\log \log x])}{\sqrt{\mathbb{E}[ X(j E [\log \log x])^2 ]}} \ge u \right],  \]
		where this time
		\[ u = \frac{\frac{\alpha}{2} \log \log x}{\sqrt{\frac{1}{2} \left(\log \log x - \left( \frac{8}{3} + \varepsilon \right) \log \log \log x \right) + O(1)}}. \]
		Setting \[Z(j) = \frac{X(j E [\log \log x])}{\sqrt{\mathbb{E}[ X(j E [\log \log x])^2 ]}},\] the reader is invited to verify that \cite[Theorem 1]{Harper3} is also applicable here, and that setting $E = [\sqrt{\log \log x}]$ is also a possible choice, and that this implies that
		\begin{align*}
		&\mathbb{P} \left[ \max_{|j| \ll \frac{\log x}{E (\log \log x)^{8/3+\varepsilon}}} Z(j) \ge u \right] \gg \frac{(\log x) e^{-u^2/2}}{E (\log \log x)^{8/3+\varepsilon} u^3 } \gg \frac{1}{(\log x)^{\alpha^2/4-1}(\log \log x)^{2 \alpha^2 / 3 + 5}}
		\end{align*}
		by choosing $\varepsilon = \frac{1}{3(\alpha^2/4 + 1)}$, therefore giving (\ref{IntClaim}).
		
		To deduce (\ref{Indep}) from this, we closely follow the argument in \cite[Section 6.3]{Harper3}. To this end, for $0 \le n \le N = [\log x / E \log y]$ and $0 \le m \le M = [(\log x)^{\alpha^2/4-1} / 2]$, set \[\tilde Z(m,n) = \frac{X(n E + 2 m [\log x])}{\sqrt{\mathbb{E}[ X(n E + 2 m [\log x])^2 ]}} .\] We now apply \cite[Comparison Inequality 1]{Harper3}, and remark that this requires $y$ to be at least a sufficiently large power of $\log x$ (which it is) in order to ensure that the correlations are indeed small for different values of $m$. Using (\ref{PNT}) to estimate these correlations as well as using translation-invariance in law, the comparison inequality implies that
		\begin{align*}
		&\quad \; \left| \mathbb{P} \left[ \max_{0 \le m \le M} \max_{ 0 \le n \le N } \tilde Z(m,n) \le u \right] - \prod_{0 \le m \le M} \mathbb{P} \left[ \max_{0 \le n \le N } \tilde Z(m,n) \le u\right] \right| \\
		&\ll \left(\frac{\log y}{\log x}\right)^{\frac{\alpha^2}{2}} \sum_{ 0 \le m_1 < m_2 \le M } \sum_{0 \le n_1, n_2 \le N}  \left| \mathbb{E} \left[ \tilde Z(m_1,n_1) \tilde Z(m_2,n_2) \right] \right| \\
		&\ll_\delta \frac{(\log y)^{\frac{\alpha^2}{2}} N^2 }{(\log x)^{\frac{\alpha^2}{2}} (\log \log x) } \sum_{0 \le m_1 < m_2 \le M} \left( \frac{1}{|m_1-m_2| \log y } + (m_1+m_2) \exp \left(-(\log y)^{\frac{3}{5} - \delta} \right) \right) \\
		&\ll \left( \frac{\log y}{\log x} \right)^{\frac{\alpha^2}{2} - 2} \left( \frac{M \log M}{\log y} + M^3 \exp \left(-(\log y)^{\frac{3}{5}-\delta} \right) \right)
		\ll (\log x)^{1 - \alpha^2/4 + \varepsilon'} + (\log x)^{-\varepsilon'}
		\end{align*}
		for some $\varepsilon' > 0$ (depending on $\alpha$) by choosing $\delta > 0$ sufficiently small and $c$ in the definition of $y$ sufficiently large (both depending on $\alpha$). The claim follows.
	\end{proof}

	\bibliography{PM}
	\bibliographystyle{alpha}
	
\end{document}